\documentclass[a4paper]{article}

\textwidth 6.5in
\oddsidemargin -0.15cm
\textheight 9.0in
\topmargin -0.65in

\usepackage[english]{babel}
\usepackage[utf8]{inputenc}
\usepackage{amsmath}
\usepackage{amsfonts}
\usepackage{amsthm}
\usepackage{tikz}
\usepackage{graphicx}
\usepackage[sort,numbers]{natbib}
\usepackage{color}
\usepackage[colorlinks]{hyperref}
\usepackage{cleveref}
\usepackage[affil-it]{authblk}

\def\mb{\mathbf}

\newtheorem{mydef}{Definition}
\newtheorem{lem}{Lemma}
\newtheorem{cor}{Corollary}
\newtheorem{thm}{Theorem}

\newtheorem{rem}{Remark}
\theoremstyle{empty}
\newtheorem{duplicate}{Appendix Theorem}

\title{\vspace{-20pt}Degree switching and partitioning for enumerating graphs to arbitrary orders of accuracy }

\author[a]{David E. Burstein
\thanks{Electronic address: \texttt{dburstein@pitt.edu}}}

\author[a]{Jonathan E. Rubin  \thanks{Electronic address: \texttt{jonrubin@pitt.edu}; Corresponding author. }}

\affil[a]{Department of Mathematics\\University of Pittsburgh \thanks{The Dietrich School of Arts and Sciences. 301 Thackeray Hall.  Pittsburgh, PA 15260}}

\begin{document}
\maketitle
\begin{abstract}
We provide a novel method for constructing asymptotics (to arbitrary accuracy) for the number of directed graphs that realize a fixed bidegree sequence $\mathbf{d=(a,b)}\in\mathbb{Z}^{N\times2}$ with maximum degree $d_{max}=O(S^{\frac{1}{2}-\tau})$ for an arbitrarily small positive number $\tau$, where $S$ is  the number edges specified by $\mathbf{d}$.  Our approach is based on two key steps,  graph partitioning and degree preserving switches.  The former idea allows us to relate enumeration results for given sequences to those for sequences that are especially easy to handle, while the latter facilitates expansions based on numbers of shared neighbors of pairs of nodes.  While we focus primarily on directed graphs allowing loops, our results can be extended to other cases, including bipartite graphs, as well as directed and undirected graphs without loops.  In addition, we can relax the constraint that $d_{max}=O(S^{\frac{1}{2}-\tau})$ and replace it with  $a_{max}b_{max}=O(S^{1-\tau})$, where $a_{max}$ and $b_{max}$ are the maximum values for $\mathbf{a}$ and $\mathbf{b}$ respectively.  The previous best results, from Greenhill et al. 2006 \cite{Greenhill06}, only allow for $d_{max}=o(S^{\frac{1}{3}})$ or alternatively $a_{max}b_{max}=o(S^{\frac{2}{3}})$.  Since in many real world networks, $d_{max}$ scales larger than $o(S^{\frac{1}{3}})$, we expect that this work will be helpful for various applications.
\end{abstract}  

\textbf{Keywords.} Degree sequence, Directed graph, Sparse graph, Zero-one table, Contingency table, \newline \noindent Sequential Importance Sampling  
\newline\newline
\indent \textbf{AMS Subject Classifications:} 05C30, 05A16, 62Q05, 05C07, 05C20

\section{Introduction}
Given a degree sequence, a fundamental question to ask is whether it is graphic; that is, does there exist a graph for which the degrees of the nodes are exactly the elements of the sequence?
A more refined view goes beyond simply considering graphicality as a yes-or-no property and recognizes that there may be very different numbers of graphs that realize different graphic degree sequences.
Our main goal in this work is to develop formulas that approximate the numbers of graphs that realize degree sequences with certain properties, which are valid asymptotically as the number of nodes in the degree sequence and in the corresponding graphs goes to infinity. 

The problem of counting graphs that realize a given degree sequence can be recast as a problem of counting $0-1$ binary matrices.
Specifically, counting the number of rectangular $0-1$ binary matrices with fixed row and column sums is equivalent to counting the number of bipartite graphs with a fixed bidegree sequence.  Alternatively, counting the number of square $0-1$ binary matrices with fixed row and column sums is equivalent to counting the number of directed graphs with loops that realize a given bidegree sequence.  Since  any rectangular $0-1$ binary matrix can be arbitrarily extended to a square $0-1$ binary matrix by adding either rows or columns of $0's$, we focus here on square binary matrices.  

In terms of applications, the aforementioned counting problem is an important step for uniformly generating $0-1$ matrices (contingency tables) with fixed row and column sums.
Uniform generation of $0-1$ binary matrices has many applications, from detecting statistically significant subgraphs (motifs) in a network in data mining \cite{Itzkowitz03,Gionis07,Chindelevitch} to determining the impact of the degree sequence on emergent dynamics in a network of nodes with temporally varying states \cite{Durrett10,Zhao11,Vespignani12}.  While the methods proposed for (almost) uniformly generating graphs from a given degree sequence are quite diverse, many (if not most) of the methods can be classified into two categories, Markov Chain Monte Carlo (MCMC) methods and Sequential Importance Sampling (SIS).

The traditional MCMC method involves swapping edges many times to generate an approximately uniformly random sample.  The main drawback of this method is that there is an unknown mixing time.  Naturally,  by knowing precise asymptotics for the number of graphs with a prescribed degree sequence, we can deduce asymptotic probabilities for the likelihood two nodes share an edge.  Consequently, we could use these asymptotic probabilities as a criteria to empirically help us determine the mixing time. There are quite a few interesting technical issues for implementing this method and we refer the reader to the literature for details \cite{Rao96,Milo03,Itzkowitz04,Berger10,Gionis07,Ray12,Verhelst}.  We also mention that there are MCMC methods that have probable bounds for almost uniformly sampling graphs with a prescribed degree sequence, but they are also computationally expensive \cite{Bezakova06}.

Alternatively, SIS methods sample the number of graphs with a prescribed degree sequence in a biased way. A large sample of graphs is taken from a biased distribution and a Law of Large Numbers argument is used to construct a new (approximately) uniform distribution based on the output of the biased sampling procedure \cite{Blitzstein,Chen05,DelGenio}.  The biggest drawback of SIS is that we often do not know how large a sample of graphs we need from our biased distribution to reliably construct the approximately uniform distribution.  Indeed, past work has shown that certain constraints on degree sequences may be required to ensure the computational efficiency of SIS methods \cite{Bayati,Blanchet}; without such constraints,  an exponentially large sample size may be required to attain meaningful estimates for approximating the uniform distribution \cite{Bezakova12}.   Statistical arguments show that to increase the speed of convergence, we want an initial biased distribution that is quite close to the uniform distribution \cite{Asmussen,Blanchet}.  Incorporating asymptotically accurate graph enumeration formulas in constructing a biased distribution could conceivably improve the performance of such methods.  

The derivation of asymptotic formulas for the number of $0-1$ binary matrices with fixed row and column sums has a rich history spanning at least as far back as 1958 with Read \cite{Read58}.   
Progress in this area has been made by restricting to matrices that are sparse \cite{Bender}, in the sense that row and column sums grow at most as a fractional power of the norm of the matrix or by restricting to matrices that are dense but have limited variation among the row and column sums \cite{Barvinok,Canfield08}.  We focus on the sparse case.  In this setting, McKay \cite{McKay84} developed a formula to count such matrices that is valid in an asymptotic sense in the limit as the number of edges $S$ becomes arbitrarily large, assuming that the maximum row sum or column sum grows as $o(S^{\frac{1}{4}})$.  More recently, Greenhill et al. \cite{Greenhill06} generalized McKay's formula, obtaining a result that holds if the maximum row sum or column sum is $o(S^{\frac{1}{3}})$.  Our motivation for considering this counting problem comes from the construction of directed graphs representing neuronal networks in the analysis of the spread of neural activity in certain brain regions.  Given that such networks can consist of many thousands of neurons, commonly with connectivities of up to 10 $\%$, we aim to count matrices with row and column sums, corresponding to in- and out-degrees, that exceed $O(S^{\frac{1}{3}})$.  
In fact, since we are asymptotically guaranteed graphicality
provided that the maximum degree (row sum or column sum) is $O(S^{\frac{1}{2}-\tau})$, as demonstrated in \cite{B15}, we expect that we should be able to generalize the above asymptotic formulas to allow for a maximum degree of $O(S^{\frac{1}{2}-\tau})$.

To attain this generality, we initially estimate the ratio $\|G_{\mathbf{d_1}}\|/\|G_{\mathbf{d_2}}\|$, where $\|G_{\mathbf{d_i}}\|$ is the number of directed graphs with loops that realize the bidegree sequence $\mathbf{d}_i$, $i=1,2$, \textcolor{black}{under the constraint that the maximum degree for both of these bidegree sequences is $O(S^{\frac{1}{2}-\tau})$ for any $\tau\in(0,\frac{1}{2}]$.}  We can estimate this ratio accurately when the taxicab norm of the difference of the bidegree sequences, $\| \mathbf{d_1}-\mathbf{d_2}\|_{1}$, equals 2, and this relation is assumed in the statements of the theorems that we prove.
We can apply the theorems in this work to estimate $\|G_{\mathbf{d_{k+1}}}\|/\|G_{\mathbf{d_0}}\|$, where $\| \mathbf{d_{k+1}}-\mathbf{d_0}\|_{1} > 2$, by considering a product  
$\Pi_{i=0}^{k} \|G_{\mathbf{d_{i+1}}}\|/\|G_{\mathbf{d_{i}}}\| = \|G_{\mathbf{d_{k+1}}}\|/\|G_{\mathbf{d_{0}}}\|$, where for all $i$, $\| \mathbf{d_i}-\mathbf{d_{i+1}}\|_{1} = 2$. (For a rigorous proof that we can construct a sequence of $\mathbf{d_{i}}$'s in this fashion, we refer the reader to a result by Muirhead that can be found in \cite{Berger14}.) 

Our main results can be summarized as follows:
 \begin{itemize}
\item In Theorem \ref{thm:partition}, we derive an expansion for $\|G_{\mathbf{d}}\|$ that holds in general for all degree sequences. 
\item In Corollary \ref{cor:ratio}, we exploit sparsity constraints to prove that the terms in the expansion of $\|G_{\mathbf{d_1}}\|/\|G_{\mathbf{d_2}}\|$ based on Theorem \ref{thm:partition} decrease geometrically.
\item Starting with Corollary \ref{cor:expand}, we establish an asymptotic approximation for $\|G_{\mathbf{d_1}}\|/\|G_{\mathbf{d_2}}\|$ up to errors of size $O(S^{-2\tau})$ for small $\tau>0$, where $S$ is the number of edges in the graph.
\item Under modest assumptions regarding the asymptotic approximation for  $\|G_{\mathbf{d_1}}\|/\|G_{\mathbf{d_2}}\|$ with $O(S^{-2w\tau})$ error terms for some positive integer $w$, in Theorem \ref{thm:arborder1} we provide a general method that yields an approximation for $\|G_{\mathbf{d_1}}\|/\|G_{\mathbf{d_2}}\|$ up to errors of size $O(S^{-2\tau-2w\tau})$.
\item Next, in Theorem \ref{thm:arborder2}, we demonstrate that if we know that our approximation for $\|G_{\mathbf{d_1}}\|/\|G_{\mathbf{d_2}}\|$ is correct up to $O(S^{-\gamma})$ errors, where $\frac{1}{2}\leq \gamma$, then we can derive a sharper approximation of  $\|G_{\mathbf{d_1}}\|/\|G_{\mathbf{d_2}}\|$ where our new error term is now $O(S^{-\gamma-2\tau})$ without making the  ``modest assumptions'' previously made in Theorem \ref{thm:arborder1}. (As the proofs of Theorems \ref{thm:arborder1} and \ref{thm:arborder2} are very similar, we place the proof of Theorem \ref{thm:arborder2} in Appendix C.)
\item In Theorem \ref{thm:graphcount}, we demonstrate how we can recover an (arbitrarily) accurate asymptotic approximation of $\|G_{\mathbf{d_1}}\|$ with knowledge of an (arbitrarily) accurate approximation of the ratio  $\|G_{\mathbf{d_1}}\|/\|G_{\mathbf{d_2}}\|$.
\item Several subsequent theorems in Section 5 extend our $O(S^{-2\tau})$ approximations up to several successively higher order error terms.
\item Theorem \ref{thm:assumption}, which builds on Theorems \ref{thm:alg}, \ref{thm:alg2}, then verifies that the ``modest assumptions'' of Theorem \ref{thm:arborder1} indeed hold if we allow errors up to  $O(S^{-1/2})$.  Hence, we can use Theorem \ref{thm:arborder1} as many times as needed to reduce errors to $O(S^{-1/2})$ and then use Theorem  \ref{thm:arborder2} to reduce errors to arbitrary order.
\item  In Appendix A, we explain how to generalize our results to the case where the product of the maximet estimae um in-degree and maximum out-degree is of $O(S^{1-\tau})$.
\item  In Appendix B, we illustrate how to extend our results to undirected and directed graphs, including the case where loops are prohibited,  as well as graphs where we prohibit edges between certain nodes.
\end{itemize}

We now detail our proof strategy.  First, in Section 2, we use {\em graph partitioning}, inspired by \cite{Miller13}, to construct novel expansions for the number of graphs that realize a bidegree sequence.  That is, for a particular realization of a bidegree sequence, we can partition our adjacency matrix into two submatrices, one submatrix containing just the $i$th and $j$th rows (or columns), and another submatrix containing the remaining $N-2$ rows.  In turn, we obtain two ``smaller" bidegree sequences for both of our submatrices.  Once we demonstrate how to count the number of graphs that realize the smaller bidegree sequence corresponding to the two-row submatrix, we obtain the following expression for $\|G_{\mathbf{d}}\|$ in terms of the number of graphs where two arbitrarily chosen nodes $i$ and $j$  (with degrees $a_{i}$,$a_{j}$) have $k$ common neighbors:
\begin{equation}
\label{eq:prelim}
\|G_{\mathbf{d}}\|=\sum_{k=0}^{a_{j}}\{\binom{a_{i}+a_{j}-2k}{a_{j}-k}\sum_{\mathbf{r}\in X_{k}}\|G_{\mathbf{r}}\|\}
\end{equation}
where $\mathbf{r}$ is a (residual) bidegree sequence of the $N-2$ remaining rows and $X_{k}$ is a set of (residual) bidegree sequences corresponding to graphs where the $i$th and $j$th nodes have exactly $k$ common neighbors.

Next, in Section 3, we introduce the idea of {\em degree preserving switches}, as discussed in \cite{McKay91,McKay10,Milo03,Rao96}, in which we make a single edge replacement to eliminate a common neighbor of two nodes without changing any nodes' degrees.  Counting graphs with common images or pre-images under degree preserving switches allows us to prove that for sparse graphs, the dominant term in the expansion only involves instances where there are no common neighbors between the two nodes $i$ and $j$.  That is, in the notation of (\ref{eq:prelim}), $$\|G_{\mathbf{d}}\|\approx \binom{a_{i}+a_{j}}{a_{j}}\sum_{\mathbf{r}\in X_{0}}\|G_{\mathbf{r}}\|.$$  
Moreover, it turns out that the set $X_{0}$ in the prior expression does not change if we consider $\mathbf{d_*}$ where $\mathbf{d_*=d}$ except that node $i$ in $\mathbf{d_*}$ has degree $a_{i}-1$ instead of $a_i$ for $\mathbf{d}=(\mathbf{a},\mathbf{b})$ with $\mathbf{a}=(a_1, \ldots, a_N)^T$.
Consequently, we find that $$\frac{\|G_{\mathbf{d}_*}\|}{\|G_{\mathbf{d}}\|} \approx  \binom{a_{i}+a_{j}-1}{a_{i}-1}/\binom{a_{i}+a_{j}-1}{a_{j}-1}=\frac{a_{i}}{a_{j}}.$$  

A subtlety of the proof is that we first establish the above statement where node $i$ or $j$ has bounded (in)-degree.  We then show that the above relationship still holds even for degree sequences that lack a node of bounded degree by using the fact that such degree sequences are very close in taxicab norm to a degree sequence that contains a node of bounded degree.  

We then proceed in Section 4 by proving (under modest assumptions) a recursion that allows for more refined approximations of $\|G_{d}\|$.  In particular we demonstrate that the higher order error terms from our approximation of $\frac{\|G_{\mathbf{d}_*}\|}{\|G_{\mathbf{d}}\|}$ in Section 3, consist of the number of graphs that realize some set of auxiliary degree sequences.  Iteratively approximating the number of graphs that realize these auxiliary degree sequences yields the recursion.  We then show how asymptotics for $\| G_\mathbf{d} \|$ follow from those obtained for the ratio $\| G_\mathbf{d} \| / \| G_{\mathbf{d_*}} \|$, where $\mathbf{d_*}$ is a degree sequence designed such that both this ratio and $\| G_{\mathbf{d_*}} \|$  itself can be estimated.  More precisely, we achieve this result by working with a sequence of  intermediary degree sequences, starting from a sequence for which it is easy to compute the number of graphs that realize it and, from there, progressing successively closer to $\mathbf{d}$.

Subsequently in Section 5, we put the technique proposed by Section 4 into practice, computationally verifying that the 'modest assumptions' do indeed hold for certain specific orders of error and deriving various asymptotic approximations for the ratio $\| G_\mathbf{d} \| / \| G_{\mathbf{d_*}} \|$.  Finally, in Section 6, we prove that the assumptions hold up to $O(S^{-1/2})$ errors, which together with the previous results shows that our method yields arbitrarily accurate asymptotic approxiations for the ratio $\| G_\mathbf{d} \| / \| G_{\mathbf{d_*}} \|$.  Establishing these results relies on deriving approximations for sums of the form $\sum_{x_1\neq...\neq x_r}^{N} \Pi_{i=1}^{r} f(x_i)$  that come up in our expansions.  Successively improved approximations are obtained by relating such expressions with the inequality of arguments enforced to similar expressions where certain arguments are either allowed to be equal or are forced to be equal.  When such terms appear in our ratios, these relations can be used to represent numerators in the right way to achieve cancellation with corresponding denominators, such that terms that remaining after cancellaton are of higher order.

\section{Using Partitioning to Count Graphs}

In this paper, we will consider {\em bidegree sequences}, each denoted by $\mathbf{d}=(\mathbf{a},\mathbf{b})\in \mathbb{Z}^{N\times 2}$, where for concreteness we specify that $\mathbf{a}$ lists the in-degrees of the nodes in the sequence and $\mathbf{b}$ the out-degrees.  We are interested in graphs that realize such sequences, where we allow either 0 or 1 connection between each pair of nodes as well as single self-loops within each graph.   Throughout the paper, we use $S$ to denote the number of edges in the graphs that realize $\mathbf{d}$.
Since we do not need to distinguish between degree sequences and bidegree sequences in this work, we will henceforth simply refer to these as degree sequences.
In one special case, it is particularly easy to count
the number of graphs that realize a specified degree sequence.  

\begin{lem} 
\label{lem:prelim}
Suppose that   
\[
\mathbf{a}=\{a_1,\ldots,a_k,0,\ldots,0\}, \mathbf{b}=\{k,..,k,1,..,1,0,..,0\}, \; \; \mbox{and} \; \; \sum_{i=1}^{k}a_{i}=\sum_{i=1}^{N}b_{i}=:S.
\] 
Let $q$ denote the number of times $k$ appears in $\mathbf{b}$ and let $p = \min \{ a_1, \ldots, a_k \}$.
If $p< q$, then there does not exist a graph that realizes this bidegree sequence.
If $p \geq q$ then there are $$\frac{(S-qk)!}{\Pi_{i} (a_{i}-q)!}$$ graphs that realize the bidegree sequence.  Similarly, if $\mathbf{a}=\{k,..,k,1,..,1,0,..,0\}$ and  $\mathbf{b}=\{b_1,\ldots,b_k,0,\ldots,0\}$, with corresponding definitions of $p$ and $q$, then there are 
$$\frac{(S-qk)!}{\Pi_{i} (b_{i}-q)!}$$ graphs that realize the bidegree sequence.
\end{lem}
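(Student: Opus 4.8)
The plan is to read off the structure of any realizing graph directly from the adjacency matrix and then reduce the count to a single multinomial coefficient. I use the convention that $A_{ij}=1$ records an edge from node $i$ to node $j$, so that row sums are the out-degrees $b_i$ and column sums are the in-degrees $a_j$. The decisive observation is that $\mathbf{a}$ has only its first $k$ entries nonzero, so every edge must terminate at one of the nodes $1,\ldots,k$; equivalently, all the $1$'s of $A$ lie in columns $1,\ldots,k$.

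First I would argue that the $q$ rows of out-degree $k$ are completely forced. Such a row must contain $k$ ones, and since only $k$ columns are available to receive edges and no entry may exceed $1$, each of these rows is all $1$'s across columns $1,\ldots,k$ and $0$ elsewhere. This deposits exactly $q$ ones into each of columns $1,\ldots,k$, so column $j$ still requires $a_j-q$ further ones. If $p=\min_j a_j < q$, then some column already carries more forced ones than its prescribed in-degree permits, so no realization can exist; this settles the first assertion.

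Assuming $p\ge q$, I would then count the completions. The remaining ones must come from the rows of out-degree $1$, of which there are exactly $m:=S-qk$ (the rows of out-degree $0$ contribute nothing). Each such row places a single $1$ in one of the columns $1,\ldots,k$, and these rows correspond to pairwise distinct, hence distinguishable, nodes. A realization is therefore the same datum as a partition of these $m$ distinguished rows into labeled blocks $B_1,\ldots,B_k$ with $|B_j|=a_j-q$, where $B_j$ collects the out-degree-$1$ rows whose edge lands in column $j$. Since $\sum_j (a_j-q)=S-qk=m$, the number of such partitions is the multinomial coefficient $m!/\prod_j (a_j-q)! = (S-qk)!/\prod_j (a_j-q)!$, which is the stated formula. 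The transposed statement (the roles of $\mathbf{a}$ and $\mathbf{b}$, and of rows and columns, interchanged) follows verbatim upon replacing $A$ by its transpose.

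The point requiring care — though it is bookkeeping rather than genuine difficulty — is checking that the forced all-ones rows together with the freely chosen single edges never produce a forbidden repeated entry, including on the diagonal where loops are allowed. Because each out-degree-$1$ node emits exactly one edge into exactly one column, and because these nodes are disjoint from the $q$ out-degree-$k$ rows, no column can receive two ones from the same source and no multi-edge ever arises; a loop (an out-degree-$1$ node $i\le k$ selecting column $i$) is legitimate and is counted exactly once. Hence the correspondence with partitions is a genuine bijection and the enumeration is exact.
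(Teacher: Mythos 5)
Your proof is correct and follows essentially the same route as the paper's: the $q$ out-degree-$k$ nodes are forced to connect to all $k$ nodes of positive in-degree, and the remaining $S-qk$ out-degree-$1$ nodes are distributed among the columns, yielding the multinomial coefficient (the paper builds it up column-by-column as a telescoping product of binomials, which is the same count). No gaps.
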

\begin{proof}
We present the proof for the first case, since the second is completely analogous.
We first note that the $q$ nodes in $\mathbf{b}$ with out-degrees equal to $k$ must connect to all of the nodes with nonzero degree in $\mathbf{a}$.  Of the $S-qk$ remaining outward edges, start with the node that corresponds to $a_{1}$.  There are $S-qk$ choices for outward edges that can supply the $a_1-q$ unconnected inward edges to this node, such that there are $\binom{S-qk}{a_{1}-q}$ possible ways to link this node into the graph.  Once  these $a_1-q$ edges have been connected, there are $\binom{S-qk-a_{1}+q}{a_{2}-q}$ ways to link the node  that corresponds to $a_{2}$ into the graph.  Notice that 
\[
\binom{S-qk-a_{1}+q}{a_{2}-q}\binom{S-qk}{a_{1}-q}=\frac{(S-qk)!}{(a_{1}-q)!(a_{2}-q)!(S-qk-a_{1}-a_{2}+2q)!}.
\]
  Multiplying inductively, the $(S-qk-a_{1}-a_{2}+2q)!$ term disappears and we obtain the desired result.
\end{proof}

\begin{rem} Lemma \ref{lem:prelim} generalizes immediately to arbitrary permutations of the given $\mathbf{a}$ and $\mathbf{b}$.
\end{rem}

With the above lemma in hand, we can construct an inductive characterization for the number of graphs that realize a specified degree sequence.   First, though, we must define some notation.

\begin{mydef}
Let $\|G_{\mathbf{d}}\|$ denote the number of graphs that realize a degree sequence $\mathbf{d}$.  Furthermore, for a set $X$ of degree sequences, let $\|G_{X}\|$ be the number of graphs that realize any degree sequence in $X$.
\end{mydef}

Now, consider an arbitrary adjacency matrix $\textbf{A}\in{\{0,1\}}^{M\times N}$.  We can write $\textbf{A}$ as 
\[
\textbf{A}=\begin{bmatrix} \mathbf{A}_{1} & \mathbf{A}_{2} &\cdots & \mathbf{A}_{N} \\ \delta_{M-1,1} & \delta_{M-1,2} & \cdots & \delta_{M-1,N}  \\ \delta_{M,1} & \delta_{M,2} & \cdots & \delta_{M,N} \end{bmatrix}
\]
where for each $i$, $\mathbf{A}_{i}\in\{0,1\}^{(M-2)\times 1}$.
Of course, letting  $\textbf{0}$ denote a column vector of $M-2$ zeros, we have 

$$\textbf{A} =\begin{bmatrix} \mathbf{A}_{1} & \mathbf{A}_{2} &\cdots &\mathbf{A}_{N} \\ 0 & 0 & \cdots & 0 \\ 0 & 0 & \cdots & 0 \end{bmatrix}+\begin{bmatrix} \textbf{0} & \textbf{0} &\cdots &\textbf{0} \\ \delta_{M-1,1} & \delta_{M-1,2} & \cdots & \delta_{M-1,N}  \\ \delta_{M,1} & \delta_{M,2} & \cdots & \delta_{M,N}  \end{bmatrix}.$$   

Call the first and second matrices in this equation $\mathbf{A}_{l}$ and $\mathbf{A}_{r}$, respectively.  Now, if $\textbf{A}$ realizes a given degree sequence, $(\mathbf{a},\mathbf{b})$, then 
there is a vector $\{s_{1},...,s_{N}\}$ such that the degree sequence of (the graph corresponding to) $\mathbf{A}_{r}$ is $(\{0,...,0,a_{M-1},a_{M}\}, \{s_{1},....,s_{N}\})$ and the degree sequence of $\mathbf{A}_{l}$ is $(\{a_{1},...,a_{M-2},0,0\},\{b_{1}-s_{1},....,b_{N}-s_{N}\})$, with the constraint that none of the $s_{i}$ (i.e., the column sums of $\mathbf{A}_{r}$) can exceed 2 and the $s_i$ must sum to $a_{M-1}+a_{M}$.  If we know the number of $s_{i}$ that equal 2, we can invoke Lemma 1 to count the number of realizations of the degree sequence of $(\{0,...,0,a_{M-1},a_{M}\}, \{s_{1},....,s_{N}\})$.  This idea, extended to a partition of the $i$th and $j$th rows rather than specifically the $(M-1)$st and $M$th rows, motivates a useful theorem.  
To state the theorem, we define the set of degree sequences
\begin{equation}
\label{eq:Xk}
\begin{array}{rl}
X_{k}(i,j;\mathbf{d})=\{(\mathbf{a}-a_{i}\mathbf{e_{i}}-a_{j}\mathbf{e_{j}} , \mathbf{b}-\mathbf{s}): & \#(s_{n}=2)=k, \; \#(s_{n}\geq 3)=0, \\
& \mbox{and} \; \; \sum_{n=1}^{N}s_{n}=a_{i}+a_{j}\}.
\end{array}
\end{equation}
Note that in equation (\ref{eq:Xk}), we explicitly represent the positions $(i,j)$  from which edges will be partitioned out as well as the base degree sequence $\mathbf{d} = (\mathbf{a},\mathbf{b})$.  $X_k(i,j;\mathbf{d})$ is a set of degree sequences, even with $\mathbf{d}$ fixed, because different choices of $\mathbf{s}$ can be made that fit the definition in (\ref{eq:Xk}).   
The  notation $X_k(i,j;\mathbf{d})$ is cumbersome and the arguments of $X_k$ will be dropped when possible, but this notation will be needed to make  results precise later in the paper.  

\begin{thm} \label{thm:partition} Fix a degree sequence $\mathbf{d}=(\mathbf{a},\mathbf{b})$.  Pick an arbitrary pair of nodes, say with indices $i, j$ and corresponding in-degrees $a_{i},a_{j}$, where $a_{j}\leq a_{i}$, and define $X_k = X_k(i,j;\mathbf{d})$ as in (\ref{eq:Xk}).  Then $$\|G_{\mathbf{d}}\|=\sum_{k=0}^{a_{j}}\binom{a_{i}+a_{j}-2k}{a_{j}-k}\|G_{X_{k}}\|.$$
\end{thm}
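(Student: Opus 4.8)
The plan is to realize the row partition sketched above as an explicit bijection and then count the contribution of the two-row block combinatorially. Concretely, I would fix the pair $(i,j)$ and write every adjacency matrix $\mathbf{A}$ that realizes $\mathbf{d}$ as $\mathbf{A}=\mathbf{A}_l+\mathbf{A}_r$, where $\mathbf{A}_r$ retains only rows $i$ and $j$ and $\mathbf{A}_l$ retains the remaining $N-2$ rows. This decomposition is well-defined and invertible, since the two summands have disjoint supports (rows $i,j$ versus the rest), so reassembling any pair $(\mathbf{A}_l,\mathbf{A}_r)$ of this form produces a genuine $0$-$1$ matrix with no entry exceeding $1$. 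Letting $\mathbf{s}=(s_1,\dots,s_N)$ denote the column sums of $\mathbf{A}_r$, each $s_n\in\{0,1,2\}$ and $\sum_n s_n=a_i+a_j$; the block $\mathbf{A}_r$ forces in-degrees $a_i,a_j$ at rows $i,j$ and supplies $s_n$ to the $n$th column, while $\mathbf{A}_l$ carries the residual degree sequence $(\mathbf{a}-a_i\mathbf{e_i}-a_j\mathbf{e_j},\,\mathbf{b}-\mathbf{s})$. Thus realizations of $\mathbf{d}$ are in bijection with triples consisting of an admissible vector $\mathbf{s}$, a realization $\mathbf{A}_l$ of the associated residual sequence $\mathbf{r}(\mathbf{s})$, and a realization $\mathbf{A}_r$ of the two-row block with column sums $\mathbf{s}$.

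Second, I would count the admissible two-row blocks $\mathbf{A}_r$ for a fixed $\mathbf{s}$ having exactly $k$ entries equal to $2$. This is where I would invoke Lemma \ref{lem:prelim}: transposing $\mathbf{A}_r$ and permuting its rows and columns (justified by the remark following the lemma) puts its degree sequence into the form the lemma covers, with the repeated value played by $2$ (occurring $k$ times) and the two nonzero complementary degrees given by $a_i,a_j$. The lemma then yields exactly $\frac{(a_i+a_j-2k)!}{(a_i-k)!\,(a_j-k)!}=\binom{a_i+a_j-2k}{a_j-k}$ such blocks, and its non-existence clause $p<q$ reproduces the fact that no block exists once $k>\min\{a_i,a_j\}=a_j$. (Equivalently, the count is elementary: the $k$ double columns are forced, leaving $a_i+a_j-2k$ single columns among which one chooses the $a_j-k$ assigned to row $j$.) The decisive point to isolate is that this coefficient depends only on $k$, and not on the particular placement of the entries of $\mathbf{s}$.

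Finally, I would assemble the count. Summing the bijection over all admissible $\mathbf{s}$ gives $\|G_{\mathbf{d}}\|=\sum_{\mathbf{s}}\binom{a_i+a_j-2k(\mathbf{s})}{a_j-k(\mathbf{s})}\,\|G_{\mathbf{r}(\mathbf{s})}\|$, where $k(\mathbf{s})=\#(s_n=2)$. Because distinct $\mathbf{s}$ yield distinct residual out-degree vectors $\mathbf{b}-\mathbf{s}$, the map $\mathbf{s}\mapsto\mathbf{r}(\mathbf{s})$ is injective, so no residual sequence is counted twice; grouping the sum by the value $k=k(\mathbf{s})$ then lets me pull the $\mathbf{s}$-independent binomial coefficient outside the inner sum. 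The inner sum $\sum_{\mathbf{s}:\,k(\mathbf{s})=k}\|G_{\mathbf{r}(\mathbf{s})}\|$ is exactly $\|G_{X_k}\|$ by the definition of $X_k=X_k(i,j;\mathbf{d})$, and $k$ ranges over $0,\dots,a_j$, which produces the stated identity.

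The hard part will be the bookkeeping in the first step rather than any deep idea: I must verify that the two summands never collide to produce an entry exceeding $1$, that residual sequences with some $s_n>b_n$ are harmless (they lie in $X_k$ but satisfy $\|G_{\mathbf{r}}\|=0$ and so can be retained without affecting the total), and that the injectivity of $\mathbf{s}\mapsto\mathbf{r}(\mathbf{s})$ makes the reindexing into $\|G_{X_k}\|$ neither drop nor double-count realizations. Once the bijection is pinned down and the $k$-only dependence of the coefficient is established, the remaining manipulation is immediate.
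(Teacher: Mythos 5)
Your proposal is correct and follows essentially the same route as the paper: partition the adjacency matrix into the two rows $i,j$ and the remaining $N-2$ rows, apply Lemma \ref{lem:prelim} to count the two-row block as $\binom{a_i+a_j-2k}{a_j-k}$ for a column-sum vector $\mathbf{s}$ with $k$ twos, and sum over $k\leq a_j$ against $\|G_{X_k}\|$. The extra bookkeeping you flag (disjoint supports, injectivity of $\mathbf{s}\mapsto\mathbf{r}(\mathbf{s})$, $k$-only dependence of the coefficient) is all sound and simply makes explicit what the paper's shorter proof leaves implicit.
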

\begin{proof}
Any adjacency matrix that realizes $\mathbf{d}$ can be partitioned into two adjacency matrices as we have discussed. 
Picking any two nodes, with in-degrees denoted by $a_{i},a_{j}$, we note that any realization of our degree sequence $\mathbf{d}$ must also be a realization of some degree sequence $(\mathbf{a}-a_{i}\mathbf{e_{i}}-a_{j}\mathbf{e_{j}} , \mathbf{b}-\mathbf{s}) \in X_{k}$ for some $k \leq N$, combined with a realization of the degree sequence $(a_{i}\mathbf{e_{i}}+a_{j}\mathbf{e_{j}} , \mathbf{s})$.  
In order for $(a_{i}\mathbf{e_{i}}+a_{j}\mathbf{e_{j}}, \mathbf{s})$ to be graphic, we require that $\#(s_{i}=2)\leq a_{j}$ (see Lemma 1), hence only graphs that realize degree sequences in $X_{k}$ for $k\leq a_{j}$ can correspond to our adjacency matrix partition.  If $(\mathbf{a}-a_{i}\mathbf{e_{i}}-a_{j}\mathbf{e_{j}} ,\mathbf{b}-\mathbf{s}) \in X_{k}$ for $k \leq a_j$, then  Lemma 1 implies that the number of graphs that realize $(a_{i}\mathbf{e_{i}}+a_{j}\mathbf{e_{j}}, \mathbf{s})$  is precisely $\binom{a_{i}+a_{j}-2k}{a_{j}-k}$.  By multiplying this quantity by the number of graphs that can be generated by the residual degree sequence $(\mathbf{a}-a_{i}\mathbf{e_{i}}-a_{j}\mathbf{e_{j}} ,\mathbf{b}-\mathbf{s})$, namely $\|G_{X_{k}}\|$, and summing over all $X_{k}$ for $k \leq a_{j}$, we obtain the desired result.     \end{proof}

\begin{rem} 
\textcolor{black}{Based on Definition 1,  $\|G_{X_k}\|$ represents the expression $\sum_{\mathbf{r}\in X_{k}}\|G_{\mathbf{r}}\|$, such that Theorem \ref{thm:partition} agrees with equation (\ref{eq:prelim}).}
\end{rem}

At this point, we introduce some additional notation.  The rationale is that we will want to compare numbers of graphs realizing two different degree sequences that are identical except that the in-degrees of two particular nodes in the sequences differ by 1.    A succint way to think of this relation is to start with a degree sequence $\mathbf{d} = (\mathbf{a},\mathbf{b})$ for which $\sum_k a_k = \sum_k b_k+1$.  
Now, for any entry $a_k \geq 1$ in $\mathbf{a}$, let $\mathbf{a_{-k}}=\mathbf{a}-\mathbf{e_{k}}$,  $\mathbf{d_{-i}}=(\mathbf{a_{-i}},\mathbf{b})$, and $\mathbf{d_{-j}}=(\mathbf{a_{-j}}, \mathbf{b})$.  Note that the sums of the in-degrees for the sequences $\mathbf{d_{-i}}$ and $\mathbf{d_{-j}}$ are identical and are equal to the sum of their out-degrees, and $\mathbf{d_{-i}}, \mathbf{d_{-j}}$ are related as desired.  
Given Theorem \ref{thm:partition}, we can attain a nontrivial approximation for the ratio of the number of graphs that realize the  degree sequence $\mathbf{d_{-i}}$ compared to the number of graphs that realize the degree sequence $\mathbf{d_{-j}}$.  To do so, we will need to work with $X_k(i,j;\mathbf{d_{-i}})$ and $X_k(i,j;\mathbf{d_{-j}})$.  But note that since $\mathbf{d_{-i}}, \mathbf{d_{-j}}$ are both defined from the same base sequence $\mathbf{d}$ and have the same in-degree sum, these two $X_k$ are identical; hence, we will simply refer to this set as $X_k$ in the proof below, and we will similarly drop the arguments of $X_k$ in most subsequent parts of the paper.

\begin{cor} If $a_{i}\geq a_{j}>0$, then 
$$\|G_{\mathbf{d_{-i}}}\|\geq \frac{a_{i}}{a_{j}}\|G_{\mathbf{d_{-j}}}\|.$$
\end{cor}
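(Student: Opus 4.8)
The plan is to apply Theorem~\ref{thm:partition} to each of $\mathbf{d_{-i}}$ and $\mathbf{d_{-j}}$, partitioning in both cases at the same pair of nodes $i,j$, and then to compare the two resulting expansions term by term. As noted in the text preceding the statement, the residual sets coincide: $X_k(i,j;\mathbf{d_{-i}}) = X_k(i,j;\mathbf{d_{-j}})$, because both are built from the common base $\mathbf{d}$, the subtraction $\mathbf{a_{-i}}-(a_i-1)\mathbf{e_i}$ equals $\mathbf{a}-a_i\mathbf{e_i}$ (and likewise for $j$), and both carry the same edge budget $\sum_n s_n = a_i+a_j-1$. I will write $X_k$ for this common set.

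First I would record the two expansions. In $\mathbf{d_{-i}}$ the in-degrees at $i,j$ are $a_i-1$ and $a_j$, while in $\mathbf{d_{-j}}$ they are $a_i$ and $a_j-1$, so in both cases the top argument of every binomial equals the same quantity $M_k := a_i+a_j-1-2k$. Using the symmetry $\binom{n}{r}=\binom{n}{n-r}$ to normalize the lower argument, the $k$-th terms become $\binom{M_k}{a_j-k}\|G_{X_k}\|$ for $\mathbf{d_{-i}}$ and $\binom{M_k}{a_j-1-k}\|G_{X_k}\|$ for $\mathbf{d_{-j}}$.

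The heart of the argument is then a one-line binomial identity. For $0\le k<a_j$,
$$\frac{\binom{M_k}{a_j-k}}{\binom{M_k}{a_j-1-k}} = \frac{M_k-(a_j-k)+1}{a_j-k} = \frac{a_i-k}{a_j-k} \ge \frac{a_i}{a_j},$$
where the last inequality is equivalent to $(a_i-a_j)k\ge 0$ and therefore holds since $a_i\ge a_j$ and $k\ge 0$. Thus each $\mathbf{d_{-i}}$ term dominates $\tfrac{a_i}{a_j}$ times the corresponding $\mathbf{d_{-j}}$ term. Summing this termwise inequality over $0\le k\le a_j-1$, the range appearing in the $\mathbf{d_{-j}}$ expansion, reproduces $\tfrac{a_i}{a_j}\|G_{\mathbf{d_{-j}}}\|$ on the right and a partial sum of the $\mathbf{d_{-i}}$ expansion on the left.

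To finish I would observe that the $\mathbf{d_{-i}}$ expansion may carry one further term (at $k=a_j$, present when $a_i>a_j$) beyond those of the $\mathbf{d_{-j}}$ expansion; since every binomial coefficient and every $\|G_{X_k}\|$ is nonnegative, dropping this term only decreases the left side, so the partial sum is at most $\|G_{\mathbf{d_{-i}}}\|$. Chaining the two inequalities yields $\|G_{\mathbf{d_{-i}}}\|\ge\tfrac{a_i}{a_j}\|G_{\mathbf{d_{-j}}}\|$. The only points requiring care are the bookkeeping of the summation ranges and confirming that the two $X_k$ genuinely agree; once the shared top argument $M_k$ is spotted the comparison is immediate, so I do not anticipate a serious obstacle.
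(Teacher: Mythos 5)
Your proposal is correct and follows essentially the same route as the paper: apply Theorem~\ref{thm:partition} to both sequences, note that the residual sets $X_k$ coincide, and compare termwise via the identity $\binom{a_i+a_j-2k-1}{a_j-k}/\binom{a_i+a_j-2k-1}{a_j-k-1}=\frac{a_i-k}{a_j-k}\geq\frac{a_i}{a_j}$. The only cosmetic difference is that you handle the $a_i=a_j$ case by normalizing the lower binomial argument via symmetry, where the paper introduces the indicator $\delta$.
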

\begin{proof}
First define the variable $\delta$ such that if $a_{j}+1 \leq a_{i}$, then $\delta=1$ and otherwise, $\delta=0$.  The statement of the Corollary holds trivially if $\mathbf{d_{-j}}$ is not graphic.  If $\mathbf{d_{-j}}$ is graphic, then so is $\mathbf{d_{-i}}$ and we can 
apply Theorem 1 and note that  $$\|G_{\mathbf{d_{-i}}}\|=\sum_{k=0}^{a_{j}-1+\delta}\binom{(a_{i}-1)+a_{j}-2k}{a_{j}-1+\delta-k}\|G_{X_{k}}\|\geq \sum_{k=0}^{a_{j}-1}\binom{a_{i}+a_{j}-2k-1}{a_{j}-k}\|G_{X_{k}}\|.$$  Similarly, $\|G_{\mathbf{d_{-j}}}\|=\sum_{k=0}^{a_{j}-1}\binom{a_{i}+a_{j}-2k-1}{a_{j}-k-1}\|G_{X_{k}}\|$.

If we show that for all relevant natural numbers $k$, 
\[
\binom{a_{i}+a_{j}-2k-1}{a_{j}-k}\geq \frac{a_{i}}{a_{j}}\binom{a_{i}+a_{j}-2k-1}{a_{j}-k-1},\]
then the result will follow.
Note that 
\[
\binom{a_{i}+a_{j}-2k-1}{a_{j}-k}/\binom{a_{i}+a_{j}-2k-1}{a_{j}-k-1}=\frac{a_{i}-k}{a_{j}-k} \geq \frac{a_i}{a_j},
\]
since $a_i \geq a_j$, 
and the proof is complete.

\end{proof}

We next seek a compact expression for $\|G_{\mathbf{d_{-i}}}\|/\|G_{\mathbf{d_{-j}}}\|$ that will enable us to analyze this ratio with minimal difficulty without having to explicitly worry about the number of terms in the formulas for $\|G_{\mathbf{d_{-i}}}\|$ or $\|G_{\mathbf{d_{-j}}}\|$ found in the summation in Theorem 1.  Before we do that we introduce two additional notational conventions: 
\begin{equation}
\label{eq:convention}
\Pi_{k=r}^{r-1}\omega_{k}=1 \; \; \; \mbox{and} \; \; \; \Pi_{k=0}^{0}\omega_{k}=\omega_{0}.
\end{equation}

\begin{cor}
\begin{equation}
\label{eq:cor2o}
\frac{\|G_{\mathbf{d_{-i}}}\|}{\|G_{\mathbf{d_{-j}}}\|}=\frac{a_{i}}{a_{j}} \left(\frac{\sum_{k=0}^{a_j}\Pi_{l=0}^{k-1}(a_{j}-l)\Pi_{l=1}^{k}(a_{i}-l)\eta_{k}}{\sum_{k=0}^{a_j-1}\Pi_{l=1}^{k}(a_{j}-l)\Pi_{l=0}^{k-1}(a_{i}-l)\eta_{k}} \right)
\end{equation}
where 
\begin{equation}
\label{eq:etak}
\eta_{k}=\frac{\|G_{X_{k}}\|}{[\Pi_{l=0}^{2k-1}(a_{i}+a_{j}-l-1)]\|G_{X_{0}}\|}
\end{equation}
when $k\leq \lfloor\frac{a_{i}+a_{j}-1}{2}\rfloor$, and $\eta_{a_j} \equiv \eta_{a_i} =1$ when $k=a_j$ if $a_j=a_i$.
\end{cor}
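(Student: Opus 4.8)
The plan is to begin from the two expansions of $\|G_{\mathbf{d_{-i}}}\|$ and $\|G_{\mathbf{d_{-j}}}\|$ already recorded in the proof of Corollary 1, namely
$$\|G_{\mathbf{d_{-i}}}\|=\sum_{k=0}^{a_{j}-1+\delta}\binom{(a_i-1)+a_j-2k}{a_j-1+\delta-k}\|G_{X_{k}}\|, \qquad \|G_{\mathbf{d_{-j}}}\|=\sum_{k=0}^{a_j-1}\binom{a_i+a_j-2k-1}{a_j-k-1}\|G_{X_{k}}\|,$$
with $\delta=1$ when $a_j<a_i$ and $\delta=0$ when $a_j=a_i$, and to recognize the asserted identity as a purely algebraic rearrangement of their ratio. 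The idea is to trade each factor $\|G_{X_k}\|$ for the renormalized quantity $\eta_k$ of \eqref{eq:etak}, so that one $k$-independent constant can be extracted from the numerator sum and another from the denominator sum; the quotient of these constants will produce the prefactor $a_i/a_j$, and what remains will be exactly the two product-form sums of \eqref{eq:cor2o}. Throughout I assume $\|G_{X_0}\|>0$, the regime in which both the ratio and the $\eta_k$ are meaningful.

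For the core computation I first solve \eqref{eq:etak} for $\|G_{X_k}\|$, obtaining $\|G_{X_k}\|=\eta_k\,[\Pi_{l=0}^{2k-1}(a_i+a_j-l-1)]\,\|G_{X_0}\|$, and rewrite the bracketed product in closed form as $(a_i+a_j-1)!/(a_i+a_j-2k-1)!$. Substituting into the numerator, the factor $(a_i+a_j-2k-1)!$ cancels the denominator of the binomial coefficient; after reconciling the two values of $\delta$ by using $a_i=a_j$ when $\delta=0$, the coefficient of $\eta_k\|G_{X_0}\|$ becomes uniformly
$$\binom{(a_i-1)+a_j-2k}{a_j-1+\delta-k}\,\Pi_{l=0}^{2k-1}(a_i+a_j-l-1)=\frac{(a_i+a_j-1)!}{(a_j-k)!\,(a_i-k-1)!},$$
while the corresponding denominator coefficient collapses to $(a_i+a_j-1)!/[(a_j-k-1)!\,(a_i-k)!]$. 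Forming the ratio cancels the common factor $(a_i+a_j-1)!\,\|G_{X_0}\|$. I then re-expand the reciprocal factorials as falling factorials, $a_j!/(a_j-k)!=\Pi_{l=0}^{k-1}(a_j-l)$ and $(a_i-1)!/(a_i-k-1)!=\Pi_{l=1}^{k}(a_i-l)$ in the numerator (and likewise $a_i!/(a_i-k)!=\Pi_{l=0}^{k-1}(a_i-l)$, $(a_j-1)!/(a_j-k-1)!=\Pi_{l=1}^{k}(a_j-l)$ in the denominator), which factors the constant $1/[a_j!\,(a_i-1)!]$ out of the numerator sum and $1/[(a_j-1)!\,a_i!]$ out of the denominator sum. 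Since $(a_j-1)!\,a_i!/[a_j!\,(a_i-1)!]=a_i/a_j$, this is exactly the prefactor in \eqref{eq:cor2o} and the residual sums are precisely those stated.

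The only delicate point, and the step I expect to demand the most care, is the bookkeeping of the summation limits together with the degenerate case $a_j=a_i$. When $a_j<a_i$ the numerator upper limit $a_j-1+\delta$ already equals $a_j$, matching \eqref{eq:cor2o}; when $a_j=a_i$ it equals $a_j-1$, but extending the sum to $a_j$ is harmless because the adjoined $k=a_j$ term carries the factor $\Pi_{l=1}^{a_j}(a_i-l)$, which contains $(a_i-a_j)=0$ and therefore vanishes. This same vanishing explains the convention $\eta_{a_j}\equiv 1$: for $a_j=a_i$ the index $a_j$ exceeds $\lfloor(a_i+a_j-1)/2\rfloor$, so \eqref{eq:etak} would be a $0/0$ expression, yet since the coefficient multiplying $\eta_{a_j}$ is annihilated before $\eta_{a_j}$ is ever used, any finite value (here $1$) is admissible. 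Finally I will check the lower endpoint: the conventions \eqref{eq:convention} make every empty product equal to $1$, so $\eta_0=1$ and the $k=0$ summands reduce correctly, ensuring that no spurious constants are introduced at either end of the two sums.
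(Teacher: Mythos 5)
Your proposal is correct and follows essentially the same route as the paper: both arguments take the Theorem 1 expansions of $\|G_{\mathbf{d_{-i}}}\|$ and $\|G_{\mathbf{d_{-j}}}\|$, renormalize each $\|G_{X_k}\|$ by $[\Pi_{l=0}^{2k-1}(a_i+a_j-l-1)]\|G_{X_0}\|$ (equivalently, multiply through by $a_i!a_j!/(a_i+a_j-1)!$), and extract the prefactor $a_i/a_j$ from the resulting falling-factorial sums. Your treatment of the $a_i=a_j$ edge case via the vanishing factor $(a_i-a_j)$ in $\Pi_{l=1}^{a_j}(a_i-l)$ matches the paper's handling of the $\delta$ and $\eta_{a_j}$ conventions.
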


\begin{proof}
Without loss of generality, assume $a_{j}<a_{i}-1$.  That is, if $a_j \in \{ a_i, a_i-1 \}$, then we can adjust the calculations with $\delta=1$ as in the proof of Corollary 1 and they will go through; furthermore, if $a_j=a_i$, then the $k=a_j=a_i$ term in the numerator of the right hand side of (\ref{eq:cor2o}) is 0, so we can set $\eta_{a_i=a_j}=1$  (or any finite number) and the result will still hold.
 
By invoking Theorem 1, using $X_k = X_k(i,j; \mathbf{d_{-i}})=X_k(i,j;\mathbf{d_{-j}})$, and dividing the numerator and denominator by $\|G_{X_{0}}\|$, we obtain 

\begin{equation}
\label{eq:doubleratio}
\begin{array}{ll}
\|G_{\mathbf{d_{-i}}}\| / \|G_{\mathbf{d_{-j}}}\|  =  \\ \\
\left( \sum_{k=0}^{a_{j}} \binom{a_{i}+a_{j}-2k-1}{a_{j}-k} \|G_{X_{k}}\| /  \|G_{X_{0}}\| \right) / 
 \left(\sum_{k=0}^{a_{j}-1} \binom{a_{i}+a_{j}-2k-1}{a_{j}-k-1} \|G_{X_{k}}\|/\|G_{X_{0}}\| \right).
\end{array}
\end{equation}
$\newline$
Now we multiply both the numerator and denominator by $\frac{a_{i}!a_{j}!}{(a_{i}+a_{j}-1)!}$, which yields
\begin{equation}
\begin{array}{ll}
\|G_{\mathbf{d_{-i}}}\| / \|G_{\mathbf{d_{-j}}}\| = \\ \\ \left( \sum_{k=0}^{a_{j}}\frac{\Pi_{l=0}^{k-1}(a_{j}-l)\Pi_{l=0}^{k}(a_{i}-l)\|G_{X_{k}}\|}{\Pi_{l=0}^{2k-1}(a_{i}+a_{j}-l-1)\|G_{X_{0}}\|}\right) / \left(\sum_{k=0}^{a_{j}-1}\frac{\Pi_{l=0}^{k}(a_{j}-l)\Pi_{l=0}^{k-1}(a_{i}-l)\|G_{X_{k}}\|}{\Pi_{l=0}^{2k-1}(a_{i}+a_{j}-l-1)\|G_{X_{0}}\|} \right).
\end{array}
\end{equation}
Substituting in $\eta_{k}$ as defined in equation (\ref{eq:etak}), we obtain
\begin{equation}
\begin{array}{ll}
\|G_{\mathbf{d_{-i}}}\|/\|G_{\mathbf{d_{-j}}}\|= \\ \\ \frac{\sum_{k=0}^{a_{j}}\Pi_{l=0}^{k-1}(a_{j}-l)\Pi_{l=0}^{k}(a_{i}-l)\eta_{k}}{\sum_{k=0}^{a_{j}-1}\Pi_{l=0}^{k}(a_{j}-l)\Pi_{l=0}^{k-1}(a_{i}-l)\eta_{k}}=\frac{a_{i}}{a_{j}} \left( \frac{\sum_{k=0}^{a_{j}}\Pi_{l=0}^{k-1}(a_{j}-l)\Pi_{l=1}^{k}(a_{i}-l)\eta_{k}}{\sum_{k=0}^{a_{j}-1}\Pi_{l=1}^{k}(a_{j}-l)\Pi_{l=0}^{k-1}(a_{i}-l)\eta_{k}} \right),
\end{array}
\end{equation}
which agrees with equation (\ref{eq:cor2o}).
\end{proof}

\begin{rem}
\label{rem:eta}
Obviously the right hand side of (\ref{eq:cor2o}) does not change if we add zero terms to the sums in the numerator and denominator.
The summation in the numerator of  this expression can be rewritten using
 $$\sum_{k=0}^{a_{j}}\Pi_{l=0}^{k-1}(a_{j}-l)\Pi_{l=1}^{k}(a_{i}-l)\eta_{k}=\sum_{k=0}^{a_{j}+1}\Pi_{l=0}^{k-1}(a_{j}-l)\Pi_{l=1}^{k}(a_{i}-l)\eta_{k},$$ which follows because $$\sum_{k=0}^{a_{j}+1}\Pi_{l=0}^{k-1}(a_{j}-l)\Pi_{l=1}^{k}(a_{i}-l)\eta_{k}-\sum_{k=0}^{a_{j}}\Pi_{l=0}^{k-1}(a_{j}-l)\Pi_{l=1}^{k}(a_{i}-l)\eta_{k}=$$ $$\Pi_{l=0}^{a_{j}}(a_{j}-l)\Pi_{l=1}^{a_{j}+1}(a_{i}-l)\eta_{a_{j}+1}=0.$$
Inductively, it follows that $$\sum_{k=0}^{a_{j}}\Pi_{l=0}^{k-1}(a_{j}-l)\Pi_{l=1}^{k}(a_{i}-l)\eta_{k}=\sum_{k=0}^{\infty}\Pi_{l=0}^{k-1}(a_{j}-l)\Pi_{l=1}^{k}(a_{i}-l)\eta_{k}.$$ 
Analogously, $$\sum_{k=0}^{a_{j}-1}\Pi_{l=1}^{k}(a_{j}-l)\Pi_{l=0}^{k-1}(a_{i}-l)\eta_{k}=\sum_{k=0}^{\infty}\Pi_{l=1}^{k}(a_{j}-l)\Pi_{l=0}^{k-1}(a_{i}-l)\eta_{k}.$$ 
Therefore, we can extend the statement of Corollary 2 to read
\begin{equation}
\label{eq:cor2}
\frac{\|G_{\mathbf{d_{-i}}}\|}{\|G_{\mathbf{d_{-j}}}\|}=\frac{a_{i}}{a_{j}} \left(\frac{\sum_{k=0}^{\infty}\Pi_{l=0}^{k-1}(a_{j}-l)\Pi_{l=1}^{k}(a_{i}-l)\eta_{k}}{\sum_{k=0}^{\infty}\Pi_{l=1}^{k}(a_{j}-l)\Pi_{l=0}^{k-1}(a_{i}-l)\eta_{k}} \right)
\end{equation}
where $\eta_k$ is defined in the statement of Corollary 2 for $k\leq \lfloor\frac{a_{i}+a_{j}-1}{2}\rfloor$ and $\eta_k=1$ (or any finite number) for all other $k$.  Expression (\ref{eq:cor2}) will be useful for providing flexibility in subsequent calculations.
\end{rem}

The results we have proved so far apply to any graphic degree sequence.  Under additional assumptions, at the cost of generality, we can go farther and obtain a power series representation for the ratio $\|G_{\mathbf{d_{-i}}}\|/\|G_{\mathbf{d_{-j}}}\|$.   Specifically, if our graph is in some sense ``sparse",  equation (\ref{eq:cor2}) suggests that $\|G_{\mathbf{d_{-i}}}\| / \|G_{\mathbf{d_{-j}}}\|=a_{i}/a_{j}+O(\epsilon)$ for $\epsilon$  ``small".   We will elaborate on this idea in the following section.

\section{Degree Preserving Switches to Count Graphs}
We will use the idea of {\em degree preserving switches} to estimate the likelihood that two nodes have a common neighbor.  This estimation will help us to derive a power series expansion of  (\ref{eq:cor2}) that is valid if we let the number of nodes, or correspondingly the number of elements in each degree sequence, be sufficiently large.   To obtain this result, it is important to set notation to specify how degrees behave as sequence length grows.
\begin{mydef}
Consider a sequence of degree sequences, $\{ \mathbf{d}^N \}_{N\in\mathbb{N}}$, where each $\mathbf{d}^N  \in\mathbb{Z}^{N\times 2}$.  We say that $d_{max}( \{ \mathbf{d}^N \}_{N\in\mathbb{N}})=O(S^{p})$, where $p\in\mathbb{R}$ and $S=S(\mathbf{d}^{N})$ is the sum of the edges for the degree sequence $\mathbf{d}^{N}$, if and only if there exists a fixed constant $C\in\mathbb{R}$ such that $\varlimsup_{N\rightarrow\infty}\frac{max(\mathbf{d}^N)}{S^{p}} \leq C$, where the maximum is taken over all components of $\mathbf{d}^N$ and  $\varlimsup$ denotes the limit supremum.  
\end{mydef}  

For notational simplicity, we will omit explicit reference to the sequence of degree sequences when we write $d_{\max}=O(S^{p})$.  The use of the $O(S)$ notation is meant to indicate that we are not referring to the maximum degree of a fixed degree sequence.   Analogously, we can say that for a sequence of degree sequences, the total number of nodes is $O(S^{p})$ for some $p\in\mathbb{R}$. In addition, we say that the $i$th node has bounded (in)-degree in the limit of a sequence of degree sequences, if $\varlimsup_{N\rightarrow\infty}(\mathbf{a}^{N})_{i}\leq C$, where $C\in\mathbb{R}$ and $(\mathbf{a}^{N})_{i}$ denotes the $i$th element of the vector $\mathbf{a}^{N}$. At this juncture, we are ready to prove a result about the asymptotic likelihood
of obtaining a graph, based on a uniform sampling of graphs realizing a degree sequence, in which two fixed nodes form edges with a common target node.

\begin{figure}[h!]
\begin{center}
\includegraphics[scale=0.25]{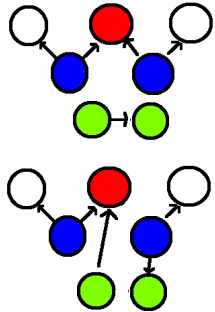}
\end{center}
\caption{An example of the type of degree preserving switch described in the proof of Theorem 2.  The top image is the input graph (i.e., the graph before the switch is applied) and the bottom is the output graph (generated by the switch). Note that in the output graph, all nodes have the same in and out degrees as in the input graph but the  two blue nodes no longer have a common neighbor.} \label{fig:switch}
\end{figure}
\begin{thm}
Consider a sequence of degree sequences such that \textcolor{black}{$d_{max}=O(S^{\frac{1}{2}-\tau})$} for some $\tau>0$. Pick an arbitrary node $x$ and another node $y$  such that $y$ has a bounded number of edges in the limit of the sequence of degree sequences.  In the limit of the sequence of degree sequences, the ratio of the number of realizations of graphs where there does not exist a node that receives an outward edge from (or supplies an inward to) both $x$ and $y$ relative to the number of graphs where exactly one node receives an outward edge from (or supplies an inward edge to) both $x$ and $y$  is $O(S^{2\tau})$.  
\end{thm}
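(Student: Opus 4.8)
The plan is to establish this ratio bound through a degree-preserving switching argument of the type depicted in Figure~\ref{fig:switch}. Let $\mathcal{G}_1$ denote the set of realizations of the given degree sequence in which exactly one node $z$ receives outward edges from both $x$ and $y$, and let $\mathcal{G}_0$ denote the set of realizations in which no node does; the target quantity is $|\mathcal{G}_0|/|\mathcal{G}_1|$. Given $G\in\mathcal{G}_1$ with common target $z$, the switch I would use selects an edge $u\to w$ of $G$, deletes $y\to z$ and $u\to w$, and inserts $y\to w$ and $u\to z$; this preserves every in-degree and out-degree, and for admissible $(u,w)$ it removes $z$ as a common target without creating a new one, landing in $\mathcal{G}_0$. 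The inverse operation (delete $y\to w$, $u\to z$; insert $y\to z$, $u\to w$) is a switch from $\mathcal{G}_0$ back to $\mathcal{G}_1$, so writing $f(G)$ for the number of forward switches out of $G\in\mathcal{G}_1$ and $b(G')$ for the number of reverse switches out of $G'\in\mathcal{G}_0$, double counting the related pairs gives $\sum_{G\in\mathcal{G}_1} f(G)=\sum_{G'\in\mathcal{G}_0} b(G')$. The bound then reduces to $|\mathcal{G}_0|/|\mathcal{G}_1|\le (\max_{\mathcal{G}_1} f)/(\min_{\mathcal{G}_0} b)$.

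The first estimate is the forward degree. A forward switch from $G\in\mathcal{G}_1$ is determined by its choice of edge $u\to w$, so $f(G)\le S$. The only excluded choices are those for which $w$ is an out-neighbor of $x$ or of $y$ (which would create a fresh common target) or for which $u\to z$ is already present; the number of edges ruled out this way is $O\big((b_x+b_y)d_{max}+a_z\,d_{max}\big)=O(d_{max}^2)=O(S^{1-2\tau})$ by the sparsity hypothesis $d_{max}=O(S^{1/2-\tau})$, and the boundedness of $y$ keeps the first term small. Hence $f(G)=S\,(1+o(1))$ uniformly over $\mathcal{G}_1$, so $\max_{\mathcal{G}_1} f=O(S)$.

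The second estimate, which I expect to be the heart of the proof, is a lower bound on the reverse degree $b(G')$ valid for every $G'\in\mathcal{G}_0$. A reverse switch chooses a prospective common target $z\in N^+(x)$ (automatically $z\notin N^+(y)$ because $G'$ has no common target), an out-neighbor $w\in N^+(y)$ to release (at least one exists since $y$ has positive out-degree), and a compensating vertex $u$ with $u\to z$ present, $u\to w$ absent, and $u\notin\{x,y\}$. For fixed $w$ this yields at least $\sum_{z\in N^+(x)}\big(a_z-|N^-(z)\cap N^-(w)|-2\big)$ switches. The plan is to use $d_{max}=O(S^{1/2-\tau})$ to show the correction terms are of lower order than the main sum $\sum_{z\in N^+(x)} a_z$, so that $b(G')=\Omega(S^{1-2\tau})$.

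Combining the two estimates yields $|\mathcal{G}_0|/|\mathcal{G}_1|\le(\max_{\mathcal{G}_1} f)/(\min_{\mathcal{G}_0} b)=O(S)/\Omega(S^{1-2\tau})=O(S^{2\tau})$, and the common in-neighbor case follows by the symmetric argument applied to the transposed adjacency matrix. The main obstacle is precisely the uniform reverse lower bound: whereas the forward count is automatically of order $S$, the number of reverse switches is governed by $\sum_{z\in N^+(x)} a_z$, so I must argue that this sum is of order $S^{1-2\tau}$ and that the subtracted corrections (common in-neighbors of $z$ and $w$, and coincidences among $u,w,z,x,y$) cannot erode it — leaning on both the sparsity bound and the bounded degree of $y$, and, if a pointwise bound proves too fragile, replacing $\min_{\mathcal{G}_0} b$ by an averaged version of the same count.
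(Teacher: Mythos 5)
There is a genuine gap, and it is one of direction rather than technique. Your switching machinery — the forward switch from $\mathcal{G}_1$ to $\mathcal{G}_0$, the double-counting identity $\sum_{G\in\mathcal{G}_1}f(G)=\sum_{G'\in\mathcal{G}_0}b(G')$ — is essentially the paper's argument (the paper reroutes the edge $x\to z$ where you reroute $y\to z$, an immaterial difference). But you then extract the inequality $|\mathcal{G}_0|/|\mathcal{G}_1|\le (\max_{\mathcal{G}_1}f)/(\min_{\mathcal{G}_0}b)$, i.e.\ an \emph{upper} bound on $|\mathcal{G}_0|/|\mathcal{G}_1|$. What the theorem actually needs — as its use in Corollaries 3 and 4 makes clear, where $\|G_{X_k}\|/\|G_{X_0}\|$ must be $O(S^{-2k\tau})$ — is the \emph{lower} bound $|\mathcal{G}_0|/|\mathcal{G}_1|\ge cS^{2\tau}$: graphs with a common neighbour must be \emph{rare}. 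The paper's phrasing ``the ratio \dots is $O(S^{2\tau})$'' is admittedly a trap, but its own proof establishes ``the number of graphs where they do not have a common out-neighbor is \emph{at least} $O(S)/O(S^{1-2\tau})$,'' a lower bound. Worse, the upper bound you are proving is false in general: if $x$ has out-degree $1$, then $b(G')\le b_y\,a_z\le b_y\,d_{max}=O(S^{\frac12-\tau})\ll S^{1-2\tau}$, your ``heart of the proof'' estimate $\min_{\mathcal{G}_0}b=\Omega(S^{1-2\tau})$ cannot hold, and indeed $|\mathcal{G}_0|/|\mathcal{G}_1|$ is then of order $S/d_{max}\ge S^{\frac12+\tau}\gg S^{2\tau}$. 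It is telling that the step you flag as the main obstacle is precisely the step that is unprovable.

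The fix is to swap which quantities you bound from which side. From the same identity one gets $|\mathcal{G}_0|/|\mathcal{G}_1|\ge (\min_{\mathcal{G}_1}f)/(\max_{\mathcal{G}_0}b)$. The forward degree is bounded \emph{below}: all but $O(d_{max}^2)=O(S^{1-2\tau})$ of the $S$ edges are admissible, so $f(G)\ge S-O(S^{1-2\tau})=\Omega(S)$ — this is exactly your first estimate, just used as a lower rather than upper bound. The reverse degree is bounded \emph{above}: a reverse switch is determined by $z\in N^+(x)$, $w\in N^+(y)$, and an in-neighbour $u$ of $z$, so $b(G')\le b_x\, b_y\, d_{max}=O(S^{\frac12-\tau})\cdot O(1)\cdot O(S^{\frac12-\tau})=O(S^{1-2\tau})$; this is where the bounded degree of $y$ and the sparsity hypothesis genuinely enter, and it requires no lower bound on $\sum_{z\in N^+(x)}a_z$ at all. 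The quotient then gives $|\mathcal{G}_0|/|\mathcal{G}_1|\ge\Omega(S)/O(S^{1-2\tau})=\Omega(S^{2\tau})$, which is the paper's conclusion (its ``$O(S^{1-2\tau}):1$'' multiplicity count is exactly the upper bound on $b$).
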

\begin{proof}
We will focus on the case of outward edges from $x$ and $y$, as the inward case is analogous. Consider a realization of a fixed degree sequence where a node (red in Figure~\ref{fig:switch}) receives outward edges from nodes $x$ and $y$ (both blue).  We can then define a degree preserving switch by choosing a remaining edge whose end points are not a red or blue node and redirecting it to the red node.  Then to preserve the degree, we replace the edge that connects the right blue node ($x$) to the red node with an edge from $x$ to the node that is now missing an edge.  Unfortunately, this operation is not 1:1, since different input graphs can yield the same output graph, so we need to account for such repeats in order to accurately count the likelihood of a node receiving edges from both $x$ and $y$.

Let us refer to the nodes receiving edges from a fixed node as the out-neighbors of that node and to the nodes supplying edges to a fixed node as the in-neighbors of that node.  In the  output graph of the degree preserving switch operator in Figure \ref{fig:switch}, denote the out-neighbors of node $x$ by $\mathbf{S}=\{ s_{1},...,s_{l}\}$ and the out-neighbors of node $y$ by $\mathbf{T}=\{t_{1},...,t_{m}\}$, where $l,m$ are the out-degrees of nodes $x,y$, respectively.  Since the desired result only pertains to input graphs where $x$ and $y$ share exactly one common out-neighbor,  we have $\mathbf{S}\cap \mathbf{T}=\emptyset$.  Importantly, for each input graph that maps to this output graph, $x$ must have $l-1$ out-neighbors that are still its out-neighbors in the output graph, while the out-neighbors of $y$ in the input and output graphs are the same. 

Without loss of generality, suppose that in the input graph, $t_{1}$ is the node that has edges from both $x$ and $y$.  In the output graph denote the in-neighbors of $t_{1}$ as $\mathbf{U}=\{u_1,....,u_n\}$ where $n$ denotes the in-degree of $t_{1}$.  In an input graph that is mapped by the degree preserving switch to the desired output graph, $t_{1}$ must have edges with $n-1$ of the nodes in $U$.  Naturally, there are $\binom{n-1}{1}$ ways for this to happen.  Since $l-1$ of the out-neighbors of $x$ in the input graph must also be out-neighbors of $x$ in the output graph, we conclude that there are fewer than $nl=O(S^{1-2\tau})$ ways to generate the same output if $t_{1}$ is the node that has edges with both $x$ and $y$ in the input graph.

We now repeat the same argument for each out-neighbor of $y$ in the output graph.  Since $y$ has bounded degree, it follows that under the  degree preserving operation, there are at most $O(S^{1-2\tau})$ ways to generate the same output.  

Now that we know that our degree preserving operation is a $O(S^{1-2\tau}):1$ function, to finish off the proof, we need to identify how many degree preserving switches are possible from a single graph.  The  edges that are eligible to be switched connect nodes such that the source is not an in-neighbor of the common out-neighbor of $x$ and $y$ and the target does not already receive an edge from $x$ (see Figure \ref{fig:switch}). 
Since $x$ has degree at most $O(S^{\frac{1}{2}-\tau})$, that means the total number of edges corresponding to the neighbors of $x$ is at most $O(S^{1-2\tau})$.   Analogously, the common out-neighbor of $x$ and $y$ has  degree at most $O(S^{\frac{1}{2}-\tau})$ and the number of edges corresponding to its in-neighbors  is $O(S^{1-2\tau})$. Consequently, we have $O(S)-O(S^{1-2\tau})=O(S)$ edges that we can choose from for the degree preserving switch operator to switch.  Hence, for every graph where $x$ and $y$ have a common out-neighbor, the number of unique graphs where they do not have a common out-neighbor is at least the ratio of $O(S)$ to the number of input graphs that can map to each output graph, namely $O(S)/O(S^{1-2\tau})=O(S^{2\tau})$, which is our desired result. \end{proof}

\begin{cor} \label{cor:ratio}
Fix a sequence of degree sequences such that $d_{max}=O(S^{\frac{1}{2}-\tau})$.  Let $k\in\mathbb{N}$. For any two nodes $x$ and $y$, where $y$ has bounded degree in the limit of the sequence of degree sequences, the ratio of the number of graphs where $k$ nodes receive edges from both $x$ and $y$ compared to the number of graphs where $k+1$ nodes receive edges from both $x$ and $y$ is $O(S^{2\tau})$.
\end{cor}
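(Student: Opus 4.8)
The plan is to recycle the degree-preserving switch of Theorem 2 essentially verbatim, now applied to graphs in which $x$ and $y$ share exactly $k+1$ common out-neighbors. First I would observe that, since $y$ has bounded degree in the limit, the number of common out-neighbors of $x$ and $y$ can never exceed the out-degree of $y$, so $k$ is bounded; in particular, whenever $k+1$ exceeds the out-degree of $y$ both counts vanish and the statement is vacuous, so we may restrict to bounded $k$. For the out-neighbor case (the in-neighbor case being analogous, exactly as in Theorem 2), I would fix a realization in which $x$ and $y$ have exactly $k+1$ common out-neighbors, single out one of them as the ``red'' node, and apply the switch of Figure~\ref{fig:switch} to this red node: redirect an eligible edge $w\to z$ to $w\to\text{red}$ and replace the edge $x\to\text{red}$ by $x\to z$.

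The key verification is that this operation maps the set of graphs with exactly $k+1$ common out-neighbors into the set with exactly $k$. Since the out-edges of $y$ are never touched, its out-neighbor set is invariant; the red node loses its edge from $x$ and so ceases to be common, while the remaining $k$ common neighbors retain both of their defining edges and are undisturbed. To guarantee that no new common neighbor is created, I would impose on the switched edge $w\to z$ the same eligibility conditions as in Theorem 2 (the source $w$ is not an in-neighbor of red, so no multi-edge to red is formed, and the target $z$ is not already an out-neighbor of $x$, so no multi-edge from $x$ is formed) together with the additional requirement that $z$ not be an out-neighbor of $y$. This last condition removes at most (out-degree of $y$)$\times d_{max}=O(S^{1/2-\tau})$ candidate edges, which is negligible, so the number of eligible switches remains $O(S)-O(S^{1-2\tau})=O(S)$ exactly as before.

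Next I would bound the multiplicity of the operation. Reversing the switch, the eliminated red node must be an out-neighbor of $y$ in the output graph that is not among the $k$ surviving common neighbors; since $y$ has bounded degree there are only $O(1)$ choices for it, and for each the same preimage count as in Theorem 2 applies, giving at most $O(S^{1-2\tau})$ graphs with $k+1$ common neighbors mapping to any fixed graph with $k$. Combining the $O(S)$ eligible switches per input graph with this $O(S^{1-2\tau})\!:\!1$ multiplicity yields, exactly as in the proof of Theorem 2, that the number of graphs with exactly $k$ common out-neighbors exceeds that with exactly $k+1$ by a factor of order $O(S)/O(S^{1-2\tau})=O(S^{2\tau})$, which is the claim.

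The main obstacle, and the only place where the presence of the extra $k$ common neighbors could cause trouble, is confirming that these additional shared neighbors neither spoil the edge-count estimates nor interfere with the one-neighbor-at-a-time bookkeeping of the switch. The crucial point making this routine is the bounded-degree hypothesis on $y$: it forces $k=O(1)$ and forces every $y$-related quantity (its out-neighbors, the in-neighbor being ``undone'', and the candidate targets excluded to avoid creating a new common neighbor) to contribute only lower-order corrections, so that all of the $O(S)$ and $O(S^{1-2\tau})$ estimates of Theorem 2 survive unchanged.
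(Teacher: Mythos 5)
Your proposal is correct and follows exactly the route the paper takes: the paper's own proof is the single sentence "Perform the same switch technique as in Theorem 2 on a particular node that has an edge with both $x$ and $y$, and the result follows analogously." Your write-up simply supplies the details that "analogously" elides (the extra eligibility condition on the target to avoid creating a new common neighbor, and the boundedness of $k$ coming from the bounded degree of $y$), all of which are consistent with and implicit in the paper's argument.
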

\begin{proof}
Perform the same switch technique as in Theorem 2 on a particular node that has an edge with both $x$ and $y$, and the result follows analogously.
\end{proof}

Next, we apply Corollary \ref{cor:ratio} to begin to expand the terms in equation (\ref{eq:cor2o}).
\begin{cor}
\label{cor:expand}
 If $d_{max}=O(S^{\frac{1}{2}-\tau})$, then $$\|G_{\mathbf{d_{-i}}}\|= \frac{a_{i}}{a_{j}}\|G_{\mathbf{d_{-j}}}\|[1+O(S^{-2\tau})].$$
\end{cor}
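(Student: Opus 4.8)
The plan is to start from the exact identity furnished by Corollary 2, equation (\ref{eq:cor2o}), and to show that the bracketed correction factor multiplying $a_i/a_j$ equals $1+O(S^{-2\tau})$. Writing the numerator and denominator of that factor as $\sum_{k\geq 0}A_k$ and $\sum_{k\geq 0}B_k$ with
\[
A_k=\Pi_{l=0}^{k-1}(a_j-l)\,\Pi_{l=1}^{k}(a_i-l)\,\eta_k,\qquad B_k=\Pi_{l=1}^{k}(a_j-l)\,\Pi_{l=0}^{k-1}(a_i-l)\,\eta_k,
\]
the conventions in (\ref{eq:convention}) give $A_0=B_0=1$ (since $\eta_0=1$). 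Hence it suffices to prove $\sum_{k\geq 1}A_k=O(S^{-2\tau})$ and $\sum_{k\geq 1}B_k=O(S^{-2\tau})$, because then the correction factor is $\frac{1+O(S^{-2\tau})}{1+O(S^{-2\tau})}=1+O(S^{-2\tau})$, and multiplying by $a_i/a_j$ yields the claim.

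The core of the argument is to control $A_k,B_k$ for $k\geq 1$. I would substitute the definition (\ref{eq:etak}) of $\eta_k$ and observe that the rising and falling products cancel against the factor $\Pi_{l=0}^{2k-1}(a_i+a_j-l-1)$ in the denominator of $\eta_k$, together with the binomial coefficient $\binom{a_i+a_j-2k}{a_j-k}$ that converts $\|G_{X_k}\|$ into the number $N_k:=\binom{a_i+a_j-2k}{a_j-k}\|G_{X_k}\|$ of realizations in which nodes $i$ and $j$ have exactly $k$ common neighbors (cf. Theorem \ref{thm:partition}). A direct computation reduces these summands to
\[
A_k=\frac{(a_i-k)(a_i+a_j)}{a_i\,(a_i+a_j-2k)}\,\frac{N_k}{N_0},\qquad B_k=\frac{(a_j-k)(a_i+a_j)}{a_j\,(a_i+a_j-2k)}\,\frac{N_k}{N_0}.
\]
For the range of $k$ that contributes nonnegligibly (say $k\leq\tfrac14(a_i+a_j)$) the scalar prefactors are bounded by an absolute constant, so $A_k,B_k=O(N_k/N_0)$; for larger $k$ the prefactors are at most polynomial in $S$ while $N_k/N_0$ is superpolynomially small, and the $k=a_j$ term vanishes outright when $a_i=a_j$, consistent with the convention $\eta_{a_j}=1$.

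The essential analytic input is then the decay of $N_k/N_0$, obtained by iterating Corollary \ref{cor:ratio}: each step from $k$ to $k+1$ common neighbors contributes a factor $N_{k+1}/N_k=O(S^{-2\tau})$, so $N_k/N_0=O(S^{-2k\tau})$. Consequently $\sum_{k\geq 1}A_k$ and $\sum_{k\geq 1}B_k$ are dominated by convergent geometric series in $S^{-2\tau}$ whose leading ($k=1$) terms are $O(S^{-2\tau})$; both sums are therefore $O(S^{-2\tau})$, completing the reduction of the first paragraph.

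The main obstacle is that Corollary \ref{cor:ratio}, and behind it Theorem 2, requires one of the two nodes to have bounded degree, since the degree‑preserving switch is only $O(S^{1-2\tau}){:}1$ once summed over the $O(1)$ out‑neighbors of that node. Thus the argument above directly establishes Corollary \ref{cor:expand} when one of $i,j$ (necessarily the lower‑degree node $j$) has bounded degree, and the genuinely delicate step, flagged in the introduction, is to remove this restriction for sequences in which neither degree is bounded by comparison with a nearby degree sequence that is close in taxicab norm and does contain a bounded‑degree node. A secondary point requiring care is the reading of the $O(\cdot)$ in Corollary \ref{cor:ratio}: what is actually needed is the upper bound $N_{k+1}/N_k=O(S^{-2\tau})$ with the implied constant uniform in $k$, so that the geometric summation is legitimate; establishing that uniformity is the one quantitative check I would not want to leave implicit.
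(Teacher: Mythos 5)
Your proposal is correct and follows essentially the same route as the paper: it first establishes the bound when one of the two nodes has bounded degree by using Corollary \ref{cor:ratio} to show that the $k\geq 1$ terms in the expansion from Corollary 2 decay geometrically like $O(S^{-2k\tau})$, and then removes the bounded-degree restriction by comparing with a nearby degree sequence containing a bounded-degree node, which is exactly the paper's construction of $\mathbf{d^{(i)}},\mathbf{d^{(j)}}$ with an appended node $N+1$ in equation (\ref{eq:ratio}). Your explicit reduction of $A_k,B_k$ to $N_k/N_0$ and your remark about needing the implied constant in Corollary \ref{cor:ratio} to be uniform in $k$ are sound refinements of what the paper leaves implicit.
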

\begin{proof}
First suppose that either node $i$ or node $j$ has bounded degree. 
By Corollary 3, we know that for all $k\geq 1$, 
\[
 \frac{\binom{a_{i}+a_{j}-2k-1}{a_{j}-k} \|G_{X_{k}}\|}{\binom{a_{i}+a_{j}-1}{a_{j}}  \|G_{X_{0}}\|}=O(S^{-2k\tau})  \; \; \mbox{and} \; \;   \frac{\binom{a_{i}+a_{j}-2k-1}{a_{j}-k-1} \|G_{X_{k}}\|}{\binom{a_{i}+a_{j}-1}{a_{j}-1}  \|G_{X_{0}}\|}=O(S^{-2k\tau}),
 \]
  as these terms  represent the number of graphs with $k$ common neighbors of the two nodes $i$ and $j$ divided by the number of graphs where nodes $i$ and $j$ have no common neighbors.

 So we conclude from equation (\ref{eq:doubleratio}) in the proof of Corollary 2 that 

\[
\begin{array}{rcl} \frac{\|G_{\mathbf{d_{-i}}}\|}{\|G_{\mathbf{d_{-j}}}\|} &=& \frac{\frac{\binom{a_{i}+a_{j}-1}{a_{j}}}{\binom{a_{i}+a_{j}-1}{a_{j}-1}}+\sum_{k=1}^{a_{j}}\frac{\binom{a_{i}+a_{j}-2k-1}{a_{j}-k}}{\binom{a_{i}+a_{j}-1}{a_{j}-1}}\frac{\|G_{X_{k}}\|}{\|G_{X_{0}}\|}}{1+\sum_{k=1}^{a_{j}-1}\frac{\binom{a_{i}+a_{j}-2k-1}{a_{j}-k-1}}{\binom{a_{i}+a_{j}-1}{a_{j}-1}}\frac{\|G_{X_{k}}\|}{\|G_{X_{0}}\|}}=\frac{\frac{\binom{a_{i}+a_{j}-1}{a_{j}}}{\binom{a_{i}+a_{j}-1}{a_{j}-1}}+\sum_{k=1}^{a_{j}}\frac{\binom{a_{i}+a_{j}-2k-1}{a_{j}-k}}{\binom{a_{i}+a_{j}-1}{a_{j}-1}}\frac{\|G_{X_{k}}\|}{\|G_{X_{0}}\|}}{1+O(S^{-2\tau})} \vspace{0.1in} \\

&=& \frac{\binom{a_{i}+a_{j}-1}{a_{j}}}{\binom{a_{i}+a_{j}-1}{a_{j}-1}}\left[ \frac{1+\sum_{k=1}^{a_{j}}\frac{\binom{a_{i}+a_{j}-2k-1}{a_{j}-k}}{\binom{a_{i}+a_{j}-1}{a_{j}}}\frac{\|G_{X_{k}}\|}{\|G_{X_{0}}\|}}{1+O(S^{-2\tau})}\right] =\frac{a_{i}}{a_{j}}[\frac{1+O(S^{-2\tau})}{1+O(S^{-2\tau})}]=\frac{a_{i}}{a_{j}}[1+O(S^{-2\tau})].
\end{array}
\]

To extend this relationship to the general case where nodes $i$ and $j$ both have degree $O(S^{\frac{1}{2}-\tau})$, we consider the degree sequences $\mathbf{d^{(i)}}=(\{\mathbf{a},1\}-\mathbf{e_{i}}, \{\mathbf{b},0\}), 
\mathbf{d^{(j)}}=(\{\mathbf{a},1\}-\mathbf{e_{j}},\{\mathbf{b},0\}) \in\mathbb{Z}^{(N+1)\times 2}$, for which 

\begin{equation}
\label{eq:ratio}
\frac{\|G_{\mathbf{d_{-i}}}\|}{\|G_{\mathbf{d_{-j}}}\|}=\frac{\|G_{\mathbf{d^{(i)}_{-(N+1)}}}\|}{\|G_{\mathbf{d^{(i)}_{-j}}}\|}\frac{\|G_{\mathbf{d^{(j)}_{-i}}}\|}{\|G_{\mathbf{d^{(j)}_{-(N+1)}}}\|},
\end{equation}
 where node $N+1$ has bounded degree and $\mathbf{d^{(j)}_{-i}}=\mathbf{d^{(i)}_{-j}}$. Consequently,   \[
 \frac{\|G_{\mathbf{d_{-i}}}\|}{\|G_{\mathbf{d_{-j}}}\|}=\frac{a_{i}}{a_{j}}[1+O(S^{-2\tau})].
 \]
\end{proof}

We conclude this section noting that the proof technique can be extended to more general cases.  We can relax the constraint that $d_{max}=O(S^{\frac{1}{2}-\tau})$ and still attain that two nodes in general should not share common neighbors.  For more details we refer the reader to Appendix A.  On a similar note, the notion that a node of bounded degree and an arbitrary node should not share common neighbors extends to cases beyond directed graphs with loops, including directed and undirected graphs without loops.  We discuss in Appendix B how to extend these arguments to these cases, noting that similar ideas also apply to graphs with other edges prohibited besides loops.  Section 4 explains how to iteratively attain more refined approximations from the relatively crude approximation given by Corollary 4.  These results also generalize  in ways that are discussed in the appendices.  

\section{Asymptotic Enumeration for Arbitrary Orders of Accuracy}
Since we have established some fundamental results in the prior section, we can now derive our general asymptotic enumeration.

We start by establishing an alternative expression for  $\|G_{\mathbf{d_{-i}}}\|/\|G_{\mathbf{d_{-j}}}\|$.
For this statement, define $X_{0_{i}}$ as the set of all residual degree sequences $(\mathbf{a}-a_{i}\mathbf{e_{i}}-\mathbf{e_{j}},\mathbf{b-s})$ with $\mathbf{s}$ constructed by removing one (outgoing) edge from each of  $a_{i}$ nodes; define $X_{1_{i}}$ as the set of all residual degree sequences constructed by removing two outgoing edges from one node and one edge from $a_{i}-2$ nodes; and define $X_{0_{j}}$,$X_{1_{j}}$ analogously 
from $(\mathbf{a}-\mathbf{e_{i}}-a_j\mathbf{e_j},\mathbf{b-s})$.

\begin{cor} \label{cor5}
\begin{equation}
\label{eq:cor5}
\frac{\|G_{d_{-i}}\|}{\|G_{d_{-j}}\|}=\frac{a_{i}}{a_{j}}\left( \frac{1+\|G_{X_{1_{i}}}\|/(a_{i}\|G_{X_{0_{i}}}\|)}{1+ \|G_{X_{1_{j}}}\|/(a_{j}\|G_{X_{0_{j}}}\|)} \right)
\end{equation}
\end{cor}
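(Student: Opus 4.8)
The plan is to obtain, for each of $\|G_{\mathbf{d_{-i}}}\|$ and $\|G_{\mathbf{d_{-j}}}\|$ separately, a two-term decomposition that refines the partition of Theorem \ref{thm:partition} by peeling off one node completely while tracking only a single edge of its partner. Concretely, to handle $\|G_{\mathbf{d_{-i}}}\|$ (where node $i$ has in-degree $a_i-1$ and node $j$ has in-degree $a_j$), I would count pairs consisting of a realization of $\mathbf{d_{-i}}$ together with a distinguished in-edge of node $j$; there are $a_j\|G_{\mathbf{d_{-i}}}\|$ such pairs. Deleting node $i$ entirely together with the distinguished edge produces a residual graph, and I would sort these residuals according to whether the source of the distinguished edge is also an in-neighbor of node $i$ (a shared source) or not (a non-shared source). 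The shared case forces one node to have two of its outgoing edges removed and leaves $a_i-2$ further nodes each missing one edge, exactly the configuration defining $X_{1_i}$, whereas the non-shared case removes one edge from each of $a_i$ distinct nodes, the configuration defining $X_{0_i}$.

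The key steps are then: (i) show the shared-source pairs biject with the graphs counted by $\|G_{X_{1_i}}\|$ --- the node with $s_n=2$ pins down both the restored edge into node $i$ and the restored edge into node $j$, so that fiber has size one; (ii) show the non-shared-source pairs number $a_i\|G_{X_{0_i}}\|$, the factor $a_i$ arising because each $X_{0_i}$ residual is recovered by selecting which of its $a_i$ deficient sources feeds node $j$; and hence $a_j\|G_{\mathbf{d_{-i}}}\| = a_i\|G_{X_{0_i}}\| + \|G_{X_{1_i}}\|$. Running the identical argument with the roles of $i$ and $j$ interchanged gives $a_i\|G_{\mathbf{d_{-j}}}\| = a_j\|G_{X_{0_j}}\| + \|G_{X_{1_j}}\|$. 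Dividing the two and factoring the leading terms out of each bracket leaves $\frac{a_i^2\|G_{X_{0_i}}\|}{a_j^2\|G_{X_{0_j}}\|}$ multiplied by the stated ratio of corrections, so it remains only to prove the cross-identity $a_i\|G_{X_{0_i}}\| = a_j\|G_{X_{0_j}}\|$.

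I would establish this last identity by a degree-preserving edge redirection inducing a bijection between the non-shared pairs for $\mathbf{d_{-i}}$ and those for $\mathbf{d_{-j}}$: given a realization of $\mathbf{d_{-i}}$ with a non-shared distinguished edge $v\to j$, delete it and insert $v\to i$; since $v$ is not an in-neighbor of $i$ this is a legitimate simple-graph move, it yields a realization of $\mathbf{d_{-j}}$ carrying a non-shared distinguished edge $v\to i$, and it is manifestly invertible. This forces $a_i\|G_{X_{0_i}}\| = a_j\|G_{X_{0_j}}\|$, the $a_i/a_j$ prefactor survives, and (\ref{eq:cor5}) follows. (Equivalently, one can read (\ref{eq:cor5}) off the exact expression (\ref{eq:cor2o}) by isolating the $k=0$ term in its numerator and denominator.) The main obstacle I anticipate is step (ii): verifying that the non-shared fiber has size exactly $a_i$ requires care with the simple-graph constraint, because an untracked common neighbor of $i$ and $j$ can leave a deficient source that already feeds node $j$ and is therefore ineligible to be re-selected. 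Reconciling this bookkeeping with the honest graph count $\|G_{X_{0_i}}\|$ --- rather than the redirection bijection or the clean shared-case count --- is where the real work lies.
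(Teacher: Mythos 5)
Your architecture is genuinely different from the paper's --- the paper appends an auxiliary node of in-degree $1$ to form $\mathbf{d^{(i)}}$ and $\mathbf{d^{(j)}}$, uses the telescoping identity (\ref{eq:ratio}), and then applies Theorem \ref{thm:partition} to the pair consisting of node $j$ (resp.\ $i$) and the auxiliary node, so that \emph{both} partitioned rows are removed in their entirety and the residual block is genuinely independent of the two-row block counted by Lemma \ref{lem:prelim}. Unfortunately, the obstacle you flag at the end is not just "where the real work lies" --- it kills the identity as you state it, and it infects step (i) as well as step (ii). Because your residual graph retains $a_j-1$ of node $j$'s in-edges, the fiber of the deletion map over a graph $H$ realizing a sequence in $X_{0_i}$ has size $a_i-c(H)$, where $c(H)$ counts the deficient sources that already feed $j$ in $H$; and an $X_{1_i}$-graph in which the doubly-deficient node already feeds $j$ is not in the image of the shared-case map at all, so that fiber count is likewise short of $\|G_{X_{1_i}}\|$. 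Combining your (correct) pair count $a_j\|G_{\mathbf{d_{-i}}}\|$ with the exact identities that the paper's argument delivers ($\|G_{\mathbf{d_{-i}}}\|=\|G_{X_{0_j}}\|$ and $a_i\|G_{X_{0_i}}\|+\|G_{X_{1_i}}\|=a_j\|G_{X_{0_j}}\|+\|G_{X_{1_j}}\|$) shows the combined shortfall is exactly $\|G_{X_{1_j}}\|$ --- i.e., the same order as the very correction terms the corollary is designed to isolate, so it cannot be absorbed. The cross-identity inherits the defect: your redirection bijection does correctly prove that the two non-shared pair counts agree (call the common value $P_0$), but what this yields is $P_0=a_j\|G_{X_{0_j}}\|-\|G_{X_{1_i}}\|=a_i\|G_{X_{0_i}}\|-\|G_{X_{1_j}}\|$, not $a_i\|G_{X_{0_i}}\|=a_j\|G_{X_{0_j}}\|$, which is false in general.

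What your valid ingredients actually establish is $\|G_{\mathbf{d_{-i}}}\|/\|G_{\mathbf{d_{-j}}}\|=\frac{a_i}{a_j}\bigl(P_0+\|G_{X_{1_i}}^{\prime}\|\bigr)/\bigl(P_0+\|G_{X_{1_j}}^{\prime}\|\bigr)$ with $P_0$ and the primed quantities being pair counts rather than the graph counts $a_i\|G_{X_{0_i}}\|$, $\|G_{X_{1_i}}\|$ appearing in (\ref{eq:cor5}); the shape is right but the normalization inside the parentheses is not the one asserted. The repair is essentially forced: you must strip \emph{all} of node $j$'s in-edges, not one of them, so that restoration is conflict-free --- which is precisely what the auxiliary degree-one node accomplishes, since Theorem \ref{thm:partition} applied to the pair $\{j,N+1\}$ removes rows $j$ and $N+1$ entirely and Lemma \ref{lem:prelim} then gives the exact fiber sizes $a_j$ and $1$. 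Finally, the parenthetical claim that (\ref{eq:cor5}) can be read off (\ref{eq:cor2o}) by isolating the $k=0$ term is also incorrect: the sets $X_{0_i},X_{1_i}$ are not the $X_0,X_1$ of the partition on the pair $(i,j)$ (they strip all of $i$'s in-degree but only one unit of $j$'s), and truncating the exact expansion (\ref{eq:cor2o}) at $k\le 1$ produces an approximation, whereas (\ref{eq:cor5}) is an exact identity.
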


\begin{proof}
The proof is in the spirit of Corollary 4.  
Let $\mathbf{d^{(i)}}= ( \{\mathbf{a},1\}-\mathbf{e_{i}}, \{\mathbf{b},0\}), \mathbf{d^{(j)}}=( \{\mathbf{a},1\}-\mathbf{e_{j}} , \{\mathbf{b},0\}) \in\mathbb{Z}^{(N+1)\times 2}$.  

We previously established equation (\ref{eq:ratio}).

But applying Theorem 1 yields $$\frac{\|G_{\mathbf{d^{(i)}_{-(N+1)}}}\|}{\|G_{\mathbf{d^{(i)}_{-j}}}\|}=\frac{\|G_{X_{0_{j}}}\|}{a_{j}\|G_{X_{0_{j}}}\|+\|G_{X_{1_{j}}}\|}=\frac{1}{a_{j}}\left( \frac{1}{1+\|G_{X_{1_{j}}}\|/(a_{j}\|G_{X_{0_{j}}}\|)} \right). $$
Similarly, $$\frac{\|G_{\mathbf{d^{(j)}_{-i}}}\|}{\|G_{\mathbf{d^{(j)}_{-(N+1)}}}\|}=\frac{a_{i}\|G_{X_{0_{i}}}\|+\|G_{X_{1_{i}}}\|}{\|G_{X_{0_{i}}}\|}=a_{i}({1+\frac{\|G_{X_{1_{i}}}\|}{a_{i}\|G_{X_{0_{i}}}\|}}).$$
\end{proof}

Corollary \ref{cor5} will be useful to us when combined with the observation that for an arbitrary degree sequence $\mathbf{m}$, 
\begin{equation}
\label{eq:GX}
\frac{\|G_{X_{1_{i}}}\|}{\|G_{X_{0_{i}}}\|}=\frac{\sum_{\mathbf{x}\in X_{1_{i}}}\|G_{\mathbf{x}}\|}{\sum_{\mathbf{x}\in X_{0_{i}}}\|G_{\mathbf{x}}\|}=\frac{\sum_{\mathbf{x}\in X_{1_{i}}} \|G_{\mathbf{x}}\|/\|G_{\mathbf{m}}\|} {\sum_{\mathbf{x}\in X_{0_{i}}} \|G_{\mathbf{x}}\|/\|G_{\mathbf{m}}\|}.
\end{equation} 
That is, equation (\ref{eq:cor5}) represents a recursion that expresses the ratio of the number of graphs of two different degree sequences as a function of the ratios of the numbers of graphs of various other degree sequences.  

We now state the first of three theorems that will enable us to reach the desired asymptotic enumeration results of arbitrary order.  

\begin{thm}
\label{thm:arborder1}
Let $\mathbf{d}=(\mathbf{a},\mathbf{b})$ with $\sum a_n= \sum b_n \pm 1$ and  
$d_{max} = O(S^{\frac{1}{2}-\tau})$.  Consider an approximation that satisfies the equation 
$$\frac{\|G_{\mathbf{d_{-i}}}\|}{\|G_{\mathbf{d_{-j}}}\|}=f(\mathbf{e}_{i},\mathbf{e}_{j},\mathbf{d},\mathbf{\sigma})(1+O(S^{-2w\tau}))$$ 
where $w\geq 1$, $2w\tau\leq \gamma$ (\textcolor{black}{we define $\gamma$ below}),
and the last argument $\mathbf{\sigma}$ either equals $\mathbf{a}$, which denotes that $\sum a_n= \sum b_n + 1$ and the two degree sequences in the ratio differ in their in-degree sequences (i.e., the in-degree sequence is being used to define $\mathbf{d_{-i}}$), or $\mathbf{\sigma}$  equals $\mathbf{b}$, which has the analogous connotation with respect to out-degree sequences, with $\sum a_n= \sum b_n - 1$. Assume that $f(\mathbf{e}_{i},\mathbf{e}_{j},\mathbf{d},\mathbf{\sigma})=h(\mathbf{e}_i,\mathbf{d},\mathbf{\sigma})/h(\mathbf{e}_j,\mathbf{d},\mathbf{\sigma})$ for some function $h$.
Furthermore, suppose that for $m=O(S^{\frac{1}{2}-\tau})$, 
\begin{equation}
\label{eq:assumption}
h(\mathbf{e}_{i},\mathbf{d}_1,\mathbf{\sigma}_1)= h(\mathbf{e}_{i},\mathbf{d}_0,\mathbf{\sigma}_0)(1+O(S^{-\gamma}))
\end{equation}
where $\|\mathbf{d}_1-\mathbf{d}_0\|_1\leq m$, $\|\mathbf{d}_1-\mathbf{d}_0\|_\infty\leq 1$, $\sigma_i$ is either $\mathbf{a}_i$ or $\mathbf{b}_i$ (i.e., the in- or out-degree sequence of $\mathbf{d}_i$), and the following equalities of dot products hold:  $\sigma_1 \cdot \mathbf{e}_i = \sigma_0 \cdot \mathbf{e}_i$, $\sigma_1 \cdot  \mathbf{e}_j = \sigma_0 \cdot \mathbf{e}_j$.  
If $\sigma=\mathbf{a}$, then there exists a sharper approximation 
$$\frac{\|G_{\mathbf{d_{-i}}}\|}{\|G_{\mathbf{d_{-j}}}\|}=g(\mathbf{e}_{i},\mathbf{e}_{j},\mathbf{d},\mathbf{a})(1+O(S^{-2(w+1)\tau}))$$
 where
$$g(\mathbf{e}_{i},\mathbf{e}_{j},\mathbf{d},\mathbf{a})=\frac{a_{i}}{a_{j}}\exp(\log(1+\frac{(a_{i}-1)\sum_{x_{1}\neq...\neq x_{a_{i-1}}=x_{a_i}} \Pi_{k=1}^{a_i} f(\mathbf{e}_{x_{k}},\mathbf{e}_{u_{k}},\mathbf{d}_{k,i,j},\mathbf{b}_k) }{\sum_{x_{1}\neq...\neq x_{a_{i-1}}\neq x_{a_i}} \Pi_{k=1}^{a_i} f(\mathbf{e}_{x_{k}},\mathbf{e}_{u_{k}},\mathbf{d}_{k,i,j},\mathbf{b})})-$$ $$\log(1+\frac{(a_{j}-1)\sum_{x_{1}\neq...\neq x_{a_{j-1}}=x_{a_j}} \Pi_{k=1}^{a_j} f(\mathbf{e}_{x_{k}},\mathbf{e}_{u_{k}},\mathbf{d}_{k,j,i},\mathbf{b}) }{\sum_{x_{1}\neq...\neq x_{a_{j-1}}\neq x_{a_j}} \Pi_{k=1}^{a_j} f(\mathbf{e}_{x_{k}},\mathbf{e}_{u_{k}},\mathbf{d}_{k,j,i},\mathbf{b})}))$$
for any arbitrary  choice of indices $\{ u_{k} \}$, where $\mathbf{d}_{k,i,j} = (\mathbf{a}-a_i\mathbf{e}_i-\mb{e}_j,\mathbf{b}-\sum_{j=1}^{a_i-k+1}\mathbf{e}_{u_j}-\sum_{j=1}^{k-1}\mathbf{e}_{x_j})$. A similar sharpened approximation, with $g$ depending on $\mathbf{b}$, holds if $\sigma=\mathbf{b}$.
\end{thm}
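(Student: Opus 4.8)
The plan is to start from the exact recursion of Corollary \ref{cor5}, written here as
$$\frac{\|G_{\mathbf{d_{-i}}}\|}{\|G_{\mathbf{d_{-j}}}\|}=\frac{a_{i}}{a_{j}}\frac{1+\|G_{X_{1_{i}}}\|/(a_{i}\|G_{X_{0_{i}}}\|)}{1+\|G_{X_{1_{j}}}\|/(a_{j}\|G_{X_{0_{j}}}\|)},$$
and to treat $C_{i}:=\|G_{X_{1_{i}}}\|/(a_{i}\|G_{X_{0_{i}}}\|)$ as a correction to the leading factor $a_i/a_j$. The crucial preliminary observation is that $C_i=O(S^{-2\tau})$: since $X_{1_i}$ collects the residual sequences exhibiting exactly one coincidence among the out-edge sources being deleted (one common neighbor of the two partitioned nodes) while $X_{0_i}$ collects the fully distinct configurations, and since the auxiliary node introduced in the $\mathbf{d^{(i)}}$ construction has bounded degree, Corollary \ref{cor:ratio} gives $\|G_{X_{1_i}}\|/\|G_{X_{0_i}}\|=O(S^{-2\tau})$. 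This is the entire engine of the order improvement: if I can approximate $C_i$ by some $\tilde C_i$ with \emph{relative} error $O(S^{-2w\tau})$, then because $C_i$ is itself of size $O(S^{-2\tau})$, the \emph{absolute} error transmitted into the ratio is $O(S^{-2\tau})\cdot O(S^{-2w\tau})=O(S^{-2(w+1)\tau})$, which upon substitution into Corollary \ref{cor5} produces exactly the claimed sharpening.

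Next I would expand $C_i$ into the combinatorial sums appearing in $g$. Using (\ref{eq:GX}) I rewrite $\|G_{X_{1_i}}\|/\|G_{X_{0_i}}\|$ as a ratio of sums of terms $\|G_{\mathbf{x}}\|/\|G_{\mathbf{m}}\|$ over a common reference sequence $\mathbf{m}$. Each residual $\mathbf{x}$ is reached from $\mathbf{m}$ by deleting the $a_i$ out-edges one at a time, so $\|G_{\mathbf{x}}\|/\|G_{\mathbf{m}}\|$ telescopes into a product of $a_i$ single-edge ratios, each approximated by a factor $f(\mathbf{e}_{x_{k}},\mathbf{e}_{u_{k}},\mathbf{d}_{k,i,j},\mathbf{b}_{k})$ (with $\sigma=\mathbf{b}$, since the deletions act on out-degrees); the indices $u_k$ are precisely the reference deletions needed to make each step a legitimate $f$-ratio, and the intermediate base $\mathbf{d}_{k,i,j}$ records how many deletions have been moved from the $u$'s onto the $x$'s. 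The denominator set $X_{0_i}$, with all $a_i$ deleted sources distinct, yields $\sum_{x_1\neq\dots\neq x_{a_i}}\prod_k f$, whereas the numerator set $X_{1_i}$, with a single coincidence, yields $\sum_{x_1\neq\dots\neq x_{a_i-1}=x_{a_i}}\prod_k f$ together with the prefactor $(a_i-1)$ counting the placements of that coincidence. This reproduces $\tilde C_i$ as displayed in the statement, and the symmetric computation gives $\tilde C_j$.

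The step where assumption (\ref{eq:assumption}) does its work is the justification for both replacing the true single-edge ratios by $f$ and, more importantly, freezing the drifting base argument $\mathbf{d}_{k,i,j}$ at a common sequence so that the products factor and the sums assemble into closed form. The hypothesis is tailored for this: it controls $h(\mathbf{e}_i,\mathbf{d}_1,\sigma_1)/h(\mathbf{e}_i,\mathbf{d}_0,\sigma_0)$ whenever $\|\mathbf{d}_1-\mathbf{d}_0\|_1\le m=O(S^{1/2-\tau})$, $\|\mathbf{d}_1-\mathbf{d}_0\|_\infty\le 1$, with dot-product constraints that are automatically satisfied here because every deletion changes a degree by exactly one and the degrees at the distinguished indices $i,j$ are preserved under the freeze. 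I would also record that the factors $h(\mathbf{e}_{u_k},\cdot)$ supplied by the reference indices occur identically in the numerator and denominator of $\tilde C_i$ and therefore cancel, which is why $g$ is independent of the arbitrary choice of $\{u_k\}$.

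Assembling the pieces gives $\|G_{\mathbf{d_{-i}}}\|/\|G_{\mathbf{d_{-j}}}\|=\tfrac{a_i}{a_j}(1+\tilde C_i)(1+\tilde C_j)^{-1}(1+O(S^{-2(w+1)\tau}))$, and writing $(1+\tilde C_i)(1+\tilde C_j)^{-1}=\exp(\log(1+\tilde C_i)-\log(1+\tilde C_j))$ recovers the stated formula for $g$. I expect the main obstacle to be the error bookkeeping in the freezing step: a naive estimate accumulates $a_i=O(S^{1/2-\tau})$ factors each carrying relative error $O(S^{-\gamma})$, which is far too large to conclude directly. The resolution, and the technical heart of the argument, is that the numerator ($X_{1_i}$) and denominator ($X_{0_i}$) sums traverse nearly identical sequences of intermediate base degree sequences, so the bulk of the accumulated freezing error is common to both and cancels in the ratio $\tilde C_i$, leaving a net relative error governed directly by the single $O(S^{-\gamma})=O(S^{-2w\tau})$ bound of (\ref{eq:assumption}); making this cancellation rigorous, while verifying that every intermediate sequence meets the $\ell_1$, $\ell_\infty$, and dot-product hypotheses, is the crux of the proof.
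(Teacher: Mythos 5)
Your proposal follows essentially the same route as the paper's proof: start from Corollary \ref{cor5}, telescope $\|G_{\mathbf{x}}\|/\|G_{\mathbf{u}}\|$ into products of single-edge ratios approximated by $f$, and convert the relative error $O(S^{-2w\tau})$ into an absolute error $O(S^{-2(w+1)\tau})$ on $C_i=\|G_{X_{1_i}}\|/(a_i\|G_{X_{0_i}}\|)$ by exploiting cancellation between the nearly identical $X_{0_i}$ and $X_{1_i}$ sums, with assumption (\ref{eq:assumption}) disposing of the dependence on the drifting base sequence and on the $u_k$. The ``crux'' you flag is carried out in the paper exactly along the lines you sketch: writing $\phi_k=f_k(1+\xi_k)$ with $\xi_k=O(S^{-2w\tau})$ depending only on $x_1,\dots,x_k$, the antisymmetrized cross-product over the index sets $D_0$ (all distinct) and $D_1$ (last two equal) kills every $\xi_k$ contribution with $k\leq a_i-2$ after integrating out the final two variables, and the surviving compensatory terms each carry one factor $\xi=O(S^{-2w\tau})$ times a combinatorial ratio of size $d_{max}^2/S=O(S^{-2\tau})$, which is precisely your heuristic $O(S^{-2\tau})\cdot O(S^{-2w\tau})$.
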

We will postpone the motivation for the assumptions that $f(\mathbf{e}_{i},\mathbf{e}_{j},\mathbf{d},\mathbf{\sigma})=h(\mathbf{e}_i,\mathbf{d},\mathbf{\sigma})/h(\mathbf{e}_j,\mathbf{d},\sigma)$ for some function $h$ and that perturbing  the third argument of $f$ (the degree sequence) only results in relatively small changes in $h$ until Section 6.  In Section 5, when we use the above theorem to explicitly compute asymptotics of the ratio $\| G_{\mathbf{d_{-i}}}\|/\|G_{\mathbf{d_{-j}}}\|$, we will be able to verify these assumptions directly.  Even at this stage, we already know that the approximation $\|G_{\mathbf{d_{-i}}}\|/\|G_{\mathbf{d_{-j}}}\|=\frac{a_i}{a_j}(1+O(S^{-2\tau}))$ does not depend on the degrees of the  nodes in the degree sequence other than nodes $i$ and $j$.  Similarly, this approximation can be expressed in terms of a decomposition like that assumed in Theorem \ref{thm:arborder1}, given by $\|G_{\mathbf{d_{-i}}}\|/\|G_{\mathbf{d_{-j}}}\|=[h(\mathbf{e}_i,\mathbf{d},\mathbf{a})/h(\mathbf{e}_j,\mathbf{d},\mathbf{a})](1+O(S^{-2\tau}))$, where $h(\mathbf{e}_i,\mathbf{d},\mathbf{a})=a_i$. We now proceed with the proof. 

\begin{proof}
Consider $\mathbf{d}$ such that $\sum a_n = \sum b_n + 1$.
We know from equation (\ref{eq:cor5}) that $$\frac{\|G_{\mathbf{d_{-i}}}\|}{\|G_{\mathbf{d_{-j}}}\|}=\frac{a_{i}}{a_{j}}\exp(\log(1+\frac{\|G_{X_{1_i}}\|}{a_{i}\|G_{X_{0_i}}\|})-\log(1+\frac{\|G_{X_{1_j}}\|}{a_{j}\|G_{X_{0_j}}\|})).$$
Let $$\|G_{\mathbf{d_{-i}}}\|/\|G_{\mathbf{d_{-j}}}\| = f(\mathbf{e}_{i},\mathbf{e}_{j},\mathbf{d},\mathbf{a})(1+O(S^{-2w\tau}))$$ for some $w \geq 1$. Our goal is to show that substituting the decomposition for $f$ into equation  (\ref{eq:cor6})  yields a sharper approximation $g$, as specified in the theorem statement (the proof with $\sigma=\mathbf{b}$ is analogous).

To prove this claim rigorously, 
we show that using $f$ to approximate $\frac{\|G_{X_{1_i}}\|}{a_{i}\|G_{X_{0_i}}\|}$  can allow us to derive  an improved  approximation of $\frac{\|G_{d_{-i}}\|}{\|G_{d_{-j}}\|}$, in a sense that will be made precise later.   Since $\frac{\|G_{X_{1_i}}\|}{a_{i}\|G_{X_{0_i}}\|}$ is equivalent to $\frac{\|G_{X_{1_j}}\|}{a_{j}\|G_{X_{0_j}}\|}$, we can carry over our results to the latter expression to complete the derivation.

For any $\mb{u} \in X_{0_i}$, equation (\ref{eq:GX}) yields
\begin{equation}
\label{eq:arbderive2}
\frac{\|G_{X_{1_i}}\|}{a_{i}\|G_{X_{0_i}}\|}=\frac{\sum_{\mathbf{x}\in X_{1_{i}}} \|G_{\mathbf{x}}\|/\|G_{\mathbf{u}}\|}{a_i\sum_{\mathbf{x}\in X_{0_{i}}} \|G_{\mathbf{x}}\|/\|G_{\mathbf{u}}\|}.
\end{equation}
Now, for any $\mathbf{x}\in X_{0_i}\cup X_{1_i}$, the bidegree sequences $\mb{u}$ and $\mb{x}$ include 
 the same number of edges and identical in-degree sequences.
  By definition we can write $\mathbf{x}=(\mathbf{a}-a_i\mathbf{e}_i-\mb{e}_j, \mathbf{b}-\sum_{j=1}^{a_i}\mathbf{e}_{x_j})$ and $\mathbf{u}=(\mathbf{a}-a_i\mathbf{e}_i-\mb{e}_j,\mathbf{b}-\sum_{j=1}^{a_i}\mathbf{e}_{u_j})$.  
  Let $\mb{d}_1=\mb{u}, \mb{d}_{a_{i+1}}=\mb{x}$, and define intermediate degree sequences 
   $\mathbf{d}_k=(\mathbf{a}-a_i\mathbf{e}_i-\mb{e}_j, \mathbf{b}-\sum_{j=1}^{a_i-k+1}\mathbf{e}_{u_j}-\sum_{j=1}^{k-1}\mathbf{e}_{x_j})$ for each $k=2, \ldots, a_i$.  Note that the sum of the in-degrees equals the sum of the out-degrees in each $\mb{d}_k$, since by assumption, $\sum a_n = \sum b_n + 1$.

Letting 

\begin{equation}
\label{eq:arbderive1}
\phi(\mathbf{e_i},\mathbf{e_j},\mathbf{d},\sigma):=\|G_{\mb{d_{-i}}}\|/\|G_{\mb{d_{-j}}}\|
\end{equation}
for any choice of $\mathbf{d}$, these definitions imply that for our particular $\mathbf{d}$ under consideration, 
\begin{equation}
\label{eq:arbderive3} \frac{\|G_{\mathbf{x}}\|}{\|G_{\mathbf{u}}\|}=\Pi_{k=1}^{a_i}\frac{\|G_{\mathbf{d}_{k+1}}\|}{\|G_{\mathbf{d}_{k}}\|}=\Pi_{k=1}^{a_i} \phi(\mathbf{e}_{x_{k}},\mathbf{e}_{u_{a_i-k+1}},(\mathbf{a}-a_i\mb{e}_i-\mb{e}_j,\mathbf{b}-\sum_{j=1}^{k-1} \mathbf{e}_{x_{j}}-\sum_{j=1}^{a_i-k}\mathbf{e}_{u_{j}},\mathbf{b}),
\end{equation}
where now the sum of the in-degrees in the third argument of $\phi$ in (\ref{eq:arbderive1}) is {\em one less than} the sum of the out-degrees.  Recall that the final argument $\mb{b}$ of $\phi$ in (\ref{eq:arbderive1}) implies that in the numerator and denominator of the right hand side of (\ref{eq:arbderive1}), a degree is being subtracted off of the {\em out-degree sequence} of the bidegree sequence given in the third argument of $\phi$, which means that the in- and out-degree sequences in $\mathbf{d_{-i}}, \mathbf{d_{-j}}$ end up with the same sums.
 The particular components of the out-degree sequence from which a degree is being subtracted are specified in the first and second arguments of $\phi$ for the numerator and denominator, respectively. 

Now, define $\Delta_i$ to be the difference between  $\frac{\|G_{X_{1_i}}\|}{a_{i}\|G_{X_{0_i}}\|}$  evaluated using the exact ratio $\phi$ and the same quantity evaluated using the approximation $f$.  
Recall that each $\mathbf{x}\in X_{0_i}$ was defined by removing the $a_i$ incoming edges to node $i$ along with one outgoing edge from each of $a_i$ distinct nodes.  The number of resulting bidegree sequences is the same as the number of bidegree sequences in which the $a_i$ outgoing edges are directed to node $i$ instead of being removed.  There is an analogous equivalence for each $\mathbf{x}\in X_{1_i}$.
Hence, we can write $\Delta_i$ as

$$ \Delta_i = \frac{(a_{i}-1)\sum_{x_{1}\neq...\neq x_{a_{i-1}}=x_{a_i}} \Pi_{k=1}^{a_i} \phi(\mathbf{e}_{x_{k}},\mathbf{e}_{u_{a_i-k+1}},(\mathbf{a}-a_i\mb{e}_i-\mb{e}_j,\mathbf{b}-\sum_{j=1}^{k-1} \mathbf{e}_{x_{j}}-\sum_{j=1}^{a_i-k}\mathbf{e}_{u_{j}}),\mathbf{b}) }{\sum_{x_{1}\neq...\neq x_{a_{i-1}}\neq x_{a_i}} \Pi_{k=1}^{a_i} \phi(\mathbf{e}_{x_{k}},\mathbf{e}_{u_{a_i-k+1}},(\mathbf{a}-a_i\mb{e}_i-\mb{e}_j,\mathbf{b}-\sum_{j=1}^{k-1} \mathbf{e}_{x_{j}}-\sum_{j=1}^{a_i-k}\mathbf{e}_{u_{j}}),\mathbf{b})}-
$$
\begin{equation} \label{eq:delta} \frac{(a_{i}-1)\sum_{x_{1}\neq...\neq x_{a_{i-1}}=x_{a_i}} \Pi_{k=1}^{a_i} {f}(\mathbf{e}_{x_{k}},\mathbf{e}_{u_{a_i-k+1}},(\mathbf{a}-a_i\mb{e}_i-\mb{e}_j,(\mathbf{b}-\sum_{j=1}^{k-1} \mathbf{e}_{x_{j}}-\sum_{j=1}^{a_i-k}\mathbf{e}_{u_{j}}),\mathbf{b}) }{\sum_{x_{1}\neq...\neq x_{a_{i-1}}\neq x_{a_i}} \Pi_{k=1}^{a_i} {f}(\mathbf{e}_{x_{k}},\mathbf{e}_{u_{a_i-k+1}},(\mathbf{a}-a_i\mb{e}_i-\mb{e}_j,(\mathbf{b}-\sum_{j=1}^{k-1} \mathbf{e}_{x_{j}}-\sum_{j=1}^{a_i-k}\mathbf{e}_{u_{j}}),\mathbf{b})}.
\end{equation}

Since the choice for $u_{a_i-k+1}$ is arbitrary and assumption (\ref{eq:assumption}) in the theorem statement  holds, 
we can simplify the notation by using $f_k(x_k)$ in place of  the full expression for $f$.  (We will defer a technical point regarding this simplification to the end of the proof.)     In contrast , $\phi$ does depend on the degree sequence and on .$x_1,...,x_{k-1}$.   But for simplicity, we abuse notation and write $\phi_k(x_k)$.  This reduces to a more tractable (but slightly misleading) notation:

$$\Delta_i = \frac{(a_{i}-1)\sum_{x_{1}\neq...\neq x_{a_{i-1}}=x_{a_i}} \Pi_{k=1}^{a_i} \phi_k({x_{k}}) }{\sum_{x_{1}\neq...\neq x_{a_{i-1}}\neq x_{a_i}} \Pi_{k=1}^{a_i} \phi_k({x_{k}})}-\frac{(a_{i}-1)\sum_{x_{1}\neq...\neq x_{a_{i-1}}=x_{a_i}} \Pi_{k=1}^{a_i} {f_k}({x_{k}}) }{\sum_{x_{1}\neq...\neq x_{a_{i-1}}\neq x_{a_i}} \Pi_{k=1}^{a_i} {f_k}({x_{k}})}.$$

Denote  $D_{0}$ as the set of sets of $a_i$ distinct indices in $\{ 1, \ldots, N \}$ and $D_1$ as the set of sets of $a_i$ indices in $\{ 1, \ldots, N \}$ such that the first $a_{i}-2$ are distinct and the final two are equal. Writing $\Delta_i$ as a single fraction, we obtain 

\[
\begin{array}{lcr}
\Delta_i &=& \frac{\textstyle (a_{i}-1)[\sum_{D_1} \Pi_{k=1}^{a_i} \phi_k({x_{k}})\sum_{D_0} \Pi_{k=1}^{a_i} {f_k}({x_{k}})-\sum_{D_1} \Pi_{k=1}^{a_i} {f_k}({x_{k}})\sum_{D_0} \Pi_{k=1}^{a_i} \phi_k({x_{k}}) ]}{\textstyle \sum_{D_0} \Pi_{k=1}^{a_i} \phi_k({x_{k}})\sum_{D_0} \Pi_{k=1}^{a_i} {f_k}({x_{k}})} \vspace{0.1in} \\
&=& \frac{\textstyle (a_{i}-1)[\sum_{D_1,D_0} \left( \Pi_{x_k\in D_1} \phi_k({x_{k}})\Pi_{x_k\in D_0} {f_k}({x_{k}})- \Pi_{x_k\in D_1}{f_k}({x_{k}}) \Pi_{x_k \in D_0} \phi_k({x_{k}})\right) ]}{\textstyle \sum_{D_0} \Pi_{k=1}^{a_i} \phi_k({x_{k}})\sum_{D_0} \Pi_{k=1}^{a_i} {f_k}({x_{k}})}.
\end{array}
\]

We now write $\phi_k=f_k(1+\xi_k)$ where $\xi_k$ depends only on $x_1,...,x_{k}$ (but we omit the dependence) and $\xi_k=O(S^{-2w\tau})$ from the definition of $f$. Furthermore, denote $\delta_k =0$ if $k = a_i$ or $k=a_i -1$ and $\delta_k = 1$ otherwise.
These steps yield  

\begin{equation}
\label{eq:deltastar}
\Delta_i =\frac{(a_{i}-1)[\sum_{D_1,D_0} \left( \Pi_{x_k\in D_1} f_k({x_{k}})(1+\xi_k\delta_k)\Pi_{x_k\in D_0} {f_k}({x_{k}})- \Pi_{x_k\in D_1}{f_k}({x_{k}}) \Pi_{x_k \in D_0} f_k({x_{k}})(1+\xi_k\delta_k)\right)+\epsilon ]}{\sum_{D_0} \Pi_{k=1}^{a_i} \phi_k({x_{k}})\sum_{D_0} \Pi_{k=1}^{a_i} {f_k}({x_{k}})} 
\end{equation}
where $\epsilon$ is the compensatory term for zeroing out certain terms by inserting the $\delta_k$ into equation (\ref{eq:deltastar}), which we can express as  $\epsilon = \epsilon_1 + \epsilon_2 - \epsilon_3 - \epsilon_4$ for 

\begin{equation} \label{eq:epsilon}
\epsilon_1 =  \sum_{D_1}\xi_{a_i}f_{a_i}(x_{a_i})f_{a_i-1}(x_{a_i-1})\Pi_{k\neq a_i-1,a_i}f_k({x_{k}})(1+\xi_k)\sum_{D_0}\Pi_{x_k\in D_0} {f_k}({x_{k}}), 
\end{equation}

\begin{equation}
\epsilon_2 =  \sum_{D_1}\xi_{a_i-1}f_{a_i-1}(x_{a_i-1})\Pi_{k\neq a_i-1}f_k({x_{k}})(1+\xi_k)\sum_{D_0}\Pi_{x_k\in D_0} {f_k}({x_{k}}), 
\end{equation}

\begin{equation}
\epsilon_3 =  \sum_{D_1}\Pi_{x_k\in D_1} {f_k}({x_{k}})\sum_{D_0}\xi_{a_i-1}f_{a_i}(x_{a_i})f_{a_i-1}(x_{a_i-1})\Pi_{k\neq a_i-1,a_i}f_k({x_{k}})(1+\xi_k), 
\end{equation}

\begin{equation}
\epsilon_4 =  \sum_{D_1}\Pi_{x_k\in D_1} {f_k}({x_{k}})\sum_{D_0}\xi_{a_i}f_{a_i}(x_{a_i})\Pi_{k\neq a_i}f_k({x_{k}})(1+\xi_k).
\end{equation}

Now, for $k=a_i$ or $k=a_i-1$, $f_k(x_k)(1+\xi_k\delta_k)=f_k(x_k)$ by definition. Factoring $f_k(x_k)$ out for those choices of $x_k$ in both $D_0$ and $D_1$, applying a version of the mean value theorem  and using the fact that $\xi_k$ only depends on $x_1,...,x_{k}$ enables us to integrate out the last two variables of $D_0$ and $D_1$.  Since the first $a_{i}-2$ indices are distinct in each element of both $D_0$ and $D_1$, if we define $D_*$ as the set of sets of $a_{i}-2$ distinct indices, then the expression for $\Delta_i$ can now be written as  

\[
\begin{array}{rcl} 
\Delta_i &=&\frac{\textstyle (a_{i}-1)[\lambda \{\sum_{D_*,D_*}\left( \Pi_{x_k\in D_*} f_k({x_{k}})(1+\xi_k)\Pi_{x_k\in D_*} {f_k}({x_{k}})- \Pi_{x_k\in D_*}{f_k}({x_{k}}) \Pi_{x_k \in D_*} f_k({x_{k}})(1+\xi_k)\right) \}+\epsilon ]}{\textstyle \sum_{D_0} \Pi_{k=1}^{a_i} \phi_k({x_{k}})\sum_{D_0} \Pi_{k=1}^{a_i} {f_k}({x_{k}})} \vspace{0.1in} \\
&=&  \frac{\textstyle (a_{i}-1)[\epsilon ]}{\textstyle \sum_{D_0} \Pi_{k=1}^{a_i} \phi_k({x_{k}})\sum_{D_0} \Pi_{k=1}^{a_i} {f_k}({x_{k}})}
\end{array}
\]
where  $\lambda$ is the constant from the application of the mean value theorem.  

To bound $\Delta_i$,  we use the crude approximation that for $k=a_i-1,a_i$, both $f_k(x_k,\mathbf{b})=\frac{\mb{b}_{x_k}}{\mb{b}_{u_k}}(1+O(S^{-2\tau}))$ and $\phi_k(x_k)=\frac{b_{x_k}}{b_{u_k}}(1+O(S^{-2\tau}))$,  
and we multiply the numerator and denominator of $\Delta_i$ by $(\Pi_{k=a_{i}-1}^{a_{i}}b_{u_k})^{2}$.
From equation (\ref{eq:epsilon}), we have that

$$(a_i-1)(\Pi_{k=a_{i}-1}^{a_{i}}b_{u_k})^{2}\epsilon_1 \leq $$
$$(a_i-1) \sum_{D_1}(\Pi_{k=a_{i}-1}^{a_{i}}b_{u_k})\xi_{a_i}f_{a_i}(x_{a_i})f_{a_i-1}(x_{a_i-1})\Pi_{k\neq a_i-1,a_i}f_k({x_{k}})(1+\xi_k)\sum_{D_0}(\Pi_{k=a_{i}-1}^{a_{i}}b_{u_k})\Pi_{x_k\in D_0} {f_k}({x_{k}}). $$
Note that $\xi_{a_i}=O(S^{-2w\tau})$ and $a_i\leq d_{max}$. Moreover, using the relationship that $\sum_{m=1}^{N}b_{u_{k}}\phi_k(x_m,\mathbf{b})=S(1+O(S^{-2\tau}))$ to integrate out the last two variables $x_{a_i-1}$ and $x_{a_i}$ from $D_0$, we can obtain the bound 
\[
\sum_{D_0}(\Pi_{k=a_{i}-1}^{a_{i}}b_{u_k})\Pi_{x_k\in D_0}^{a_i} {f_k}({x_{k}}) \leq \sum_{D_0*}(S+O(S^{1-2\tau}))^{2}\Pi_{x_k\in D_0*}{f_k}({x_{k}}).
\]
Hence, 

$$(a_i-1)(\Pi_{k=a_{i}-1}^{a_{i}}b_{u_k})^{2}\epsilon_1 \leq$$ $$ d_{max} O(S^{-2w\tau}) \sum_{D_1}(\Pi_{k=a_{i}-1}^{a_{i}}b_{u_k})f_{a_i}(x_{a_i})f_{a_i-1}(x_{a_i-1})\Pi_{k\neq a_i-1,a_i}f_k({x_{k}})(1+\xi_k)\sum_{D_0*}(S+O(S^{1-2\tau}))^{2}\Pi_{x_k\in D_0*}{f_k}({x_{k}}). $$

Similarly, we conclude that

$$ (a_i-1)(\Pi_{k=a_{i}-1}^{a_{i}}b_{u_k})^{2}\epsilon_1 \leq O(S^{-2w\tau})d_{max}^{2}[S+O(S^{1-2\tau})]^{3}\sum_{D_1*}\Pi_{x_k\in D_1*}f_k({x_{k}})\sum_{D_0*}\Pi_{x_k\in D_0*}{f_k}({x_{k}})$$
by replacing the summation over $D_1$ with a summation over $D_{1*}$, again integrating out the last two variables $x_{a_i-1}$ and $x_{a_i}$ from $D_1$ using $b_{u_{k}}\phi_k(x_m,\mathbf{b})=b_{x_{m}}(1+O(S^{-2\tau})).$

Repeating this argument for all $\epsilon_i$ $i=2,3,4$, we obtain that $$(a_{i}-1)(\Pi_{k=a_i-1}^{a_i} b_{u_k})^{2}\epsilon \leq 4d_{max}^{2}(S+O(S^{1-2\tau}))^{3}O(S^{-2w\tau})\sum_{D_1*} \Pi_{x_k\in D_1*} f_k({x_{k}})(1+\xi_k)\sum_{D_0*}\Pi_{x_k\in D_0*} {f_k}({x_{k}}).$$
Similarly, $$(\Pi_{k=a_{i}-1,a_{i}}b_{u_k})^{2}\sum_{D_0} \Pi_{k=1}^{a_i} \phi_k({x_{k}})\sum_{D_0} \Pi_{k=1}^{a_i} {f_k}({x_{k}})\geq (S-O(S^{1-2\tau}))^{4}\sum_{D_1*} \Pi_{x_k\in D_1*} f_k({x_{k}})(1+\xi_k)\sum_{D_0*}\Pi_{x_k\in D_0*} {f_k}({x_{k}}).$$

Simplifying, using the fact that $d_{max} = O(S^{\frac{1}{2}-\tau})$, we obtain 
$$\Delta_i =\frac{(a_{i}-1)(\Pi_{k=a_i-1}^{a_i} b_{u_k})^{2}\epsilon }{(\Pi_{k=a_{i}-1,a_{i}}b_{u_k})^{2}\sum_{D_0} \Pi_{k=1}^{a_i} \phi_k({x_{k}})\sum_{D_0} \Pi_{k=1}^{a_i} {f_k}({x_{k}})}=O(S^{-2(w+1)\tau}).$$
Therefore, we conclude that  $g$ as defined in the theorem statement can be approximated as $g(\mathbf{e}_{i},\mathbf{e_{j}},\mathbf{d},\mathbf{a})=\frac{\|G_{\mathbf{d_{-i}}}\|}{\|G_{\mathbf{d_{-j}}}\|}(1+O(S^{-2(w+1)\tau})),$
even though our approximation for $f$ is $f(\mathbf{e}_{i},\mathbf{e}_{j},\mathbf{d},\mathbf{x})=\frac{\|G_{\mathbf{d_{-i}}}\|}{\|G_{\mathbf{d_{-j}}}\|}(1+O(S^{-2w\tau}))$.

We have nearly completed the proof.  We argued above that the choice of $u_k$ should not have any impact in evaluating expressions such as 
 $$(a_{i}-1)\sum_{x_{1}\neq...\neq x_{a_{i-1}}=x_{a_i}} \Pi_{k=1}^{a_i} {f}(\mathbf{e}_{x_{k}},\mathbf{e}_{u_{k}},(\mathbf{a}-a_i\mb{e}_i,\mathbf{b}-\sum_{j=1}^{k-1} \mathbf{e}_{x_{j}}-\sum_{j=1}^{a_i-k}\mathbf{e}_{u_{j}}),\mathbf{b})=\sum_{\mathbf{x}\in X_{1_i}}\frac{\|G_{\mathbf{x}}\|}{\|G_{\mathbf{u}}\|} $$ 
 By our assumptions on $f$, the dependence on the degree sequence is ignored (i.e., represents a higher order term) unless  one of the $u_{a_i-m+1}$ equals one of the $x_k$'s.  (That is, when we evaluate $\Pi_{k}f(\mathbf{e}_{x_{k}},\mathbf{e}_{u_{a_i-k+1}},(\mathbf{a}-a_i\mb{e}_i,\mathbf{b}-\sum_{j=1}^{k-1} \mathbf{e}_{x_{j}}-\sum_{j=1}^{a_i-k}\mathbf{e}_{u_{j}}),\mathbf{b})$ the out-degree of node $x_k$ may be $b_{x_k}-1$ and not $b_{x_k}$.) 
 
If this is the case, then consider the product 
\begin{equation}
\label{eq:indep}
f(\mathbf{e}_{x_{k}},\mathbf{e}_{u_{a_i-k+1}},\mathbf{b}-\sum_{j=1}^{k-1} \mathbf{e}_{x_{j}}-\sum_{j=1}^{a_i-k}\mathbf{e}_{u_{j}}) f(\mathbf{e}_{x_{m}},\mathbf{e}_{u_{a_i-m+1}},\mathbf{b}-\sum_{j=1}^{m-1} \mathbf{e}_{x_{j}}-\sum_{j=1}^{a_i-m}\mathbf{e}_{u_{j}})
\end{equation}
where we omitted the dependence on the in-degree sequence and the last argument $\mathbf{b}$.
Now when evaluating $f(\mathbf{e}_{x_{k}},\mathbf{e}_{u_{a_i-k+1}},\mathbf{b}-\sum_{j=1}^{k-1} \mathbf{e}_{x_{j}}-\sum_{j=1}^{a_i-k}\mathbf{e}_{u_{j}})$,  the degree of the $x_k$ is $b_{x_k}-1$ if and only if $a_i-m+1\leq a_i -k$,  so in both terms of the product the out-degree of node $x_k$ is $b_{x_k}-1$.  

We will now invoke assumption (\ref{eq:assumption}).  We will condense notation a bit:  in the function $h$, if $\mathbf{d}=(\mathbf{a},\mathbf{b})$ and $\mathbf{\sigma}$ selects the in-degree (out-degree) sequence, then we write $h(\mathbf{e}_i,\mathbf{a})$ ($h(\mathbf{e}_i,\mathbf{b})$).   We can thus write (\ref{eq:indep})  as   $$\left(\frac{h(x_k,\mathbf{b}-\sum_{j=1}^{k-1} \mathbf{e}_{x_{j}}-\sum_{j=1}^{a_i-k}\mathbf{e}_{u_{j}})}{h(u_{a_i-k+1},\mathbf{b}-\sum_{j=1}^{k-1} \mathbf{e}_{x_{j}}-\sum_{j=1}^{a_i-k}\mathbf{e}_{u_{j}})} \right) \left(\frac{h(x_m,\mathbf{b}-\sum_{j=1}^{m-1} \mathbf{e}_{x_{j}}-\sum_{j=1}^{a_i-m}\mathbf{e}_{u_{j}})}{h(u_{a_i-m+1},\mathbf{b}-\sum_{j=1}^{m-1} \mathbf{e}_{x_{j}}-\sum_{j=1}^{a_i-m}\mathbf{e}_{u_{j}})}\right).$$
But since $x_k = u_{a_i-m+1}$, $$\frac{h(x_k,\mathbf{b}-\sum_{j=1}^{k-1} \mathbf{e}_{x_{j}}-\sum_{j=1}^{a_i-k}\mathbf{e}_{u_{j}})}{h(u_{a_i-m+1},\mathbf{b}-\sum_{j=1}^{m-1} \mathbf{e}_{x_{j}}-\sum_{j=1}^{a_i-m}\mathbf{e}_{u_{j}})}=(1+O(S^{-2w\tau}))=\frac{h(x_k,\mathbf{z_1})}{h(u_{a_i-m+1},\mathbf{z_2})}(1+O(S^{-2w\tau}))$$
for any arbitrary vectors $\mathbf{z_1}$,$\mathbf{z_2}$ that satisfy assumption (\ref{eq:assumption}), as $x_k=u_{a_i-m+1}$. In particular, we can choose $\mathbf{z_1}$,$\mathbf{z_2}$ such that the out-degree of node $x_k$ is $b_{x_k}$ and not $b_{x_k}-1$  ,hence the dependence on the degree sequence can be ignored as initially claimed.
\end{proof}

Even with the many assumptions in the statement of Theorem \ref{thm:arborder1}, we still do not have our asymptotic enumeration result for counting the number of graphs realizing a given bidegree sequence.  The issue is that we would need to evaluate products of approximations $\Pi_{i=1}^{a_i} f(x_i)(1+O(S^{-2w\tau}))$ where $a_i=O(S^{\frac{1}{2}-\tau})$, which diverges as $S \to \infty$.
To avoid this problem, we note that Theorem \ref{thm:arborder1} does give us a way to shrink the error term in the product, decreasing the power of $S$ by 1 in each step.
Thus, by repeatedly applying Theorem  \ref{thm:arborder1}, we can obtain a product of the form $\Pi_{i=1}^{a_i} f(x_i)(1+O(S^{-\frac{1}{2}}))$, which does not yield divergence in the limit.  
To harness this strategy, we use a result that is analogous to Theorem  \ref{thm:arborder1} but starts with an approximation of $O(S^{-\frac{1}{2} - w\tau})$.  For this additional result, stated in Theorem \ref{thm:arborder2}, we no longer need the full assumptions made in Theorem \ref{thm:arborder1}, since starting from an improved approximation means that certain terms must stay bounded.

\begin{thm}
\label{thm:arborder2}
Consider an approximation $$\frac{\|G_{\mathbf{d_{-i}}}\|}{\|G_{\mathbf{d_{-j}}}\|}=f(\mathbf{e}_{i},\mathbf{e}_{j},\mathbf{d},\mathbf{\sigma})(1+O(S^{-\frac{1}{2}-w\tau}))$$ for some $w> 0$. 
Furthermore suppose that for $m=O(S^{\frac{1}{2}-\tau})$, $$ f(\mathbf{e}_{i},\mathbf{e}_{j},\mathbf{d}_0,\mathbf{\sigma})= f(\mathbf{e}_{i},\mathbf{e}_{j},\mathbf{d}_1,\mathbf{\sigma})+z(\mathbf{e}_{i},\mathbf{e}_{j},\mathbf{d}_0-\mathbf{d}_1,\mathbf{d}_0,\mathbf{\sigma})$$
where  $\|\mathbf{d}_1-\mathbf{d}_0\|_1\leq m$ and $z(\mathbf{e}_{i},\mathbf{e}_{j},\mathbf{d}_0-\mathbf{d}_1,\mathbf{d}_0,\mathbf{\sigma})\leq  O(S^{-\frac{1}{2}-\tau})f(\mathbf{e}_{i},\mathbf{e}_{j},\mathbf{d}_0,\mathbf{\sigma})$.
\textcolor{black}{If $\mathbf{\sigma}=\mathbf{a}$,} then we can construct a sharper approximation $$\frac{\|G_{\mathbf{d_{-i}}}\|}{\|G_{\mathbf{d_{-j}}}\|}=g(\mathbf{e}_{i},\mathbf{e_{j}},\mathbf{d},\mathbf{a})(1+O(S^{-\frac{1}{2}-(w+2)\tau}))$$ where
\[
\begin{array}{rcl}
g(\mathbf{e}_{i},\mathbf{e}_{j},\mathbf{d},\mathbf{a})&=&\frac{a_{i}}{a_{j}}\exp(\log(1+\frac{(a_{i}-1)\sum_{x_{1}\neq...\neq x_{a_{i-1}}=x_{a_i}} \Pi_{k=1}^{a_i} f(\mathbf{e}_{x_{k}},\mathbf{e}_{u_{k}},(\mathbf{a}-a_i\mb{e}_i,(\mathbf{b}-\sum_{j=1}^{k-1} \mathbf{e}_{x_{j}}+\sum_{j=1}^{a_i-k}\mathbf{e}_{u_{j}}),\mathbf{b}) }{\sum_{x_{1}\neq...\neq x_{a_{i-1}}\neq x_{a_i}} \Pi_{k=1}^{a_i} f(\mathbf{e}_{x_{k}},\mathbf{e}_{u_{k}},(\mathbf{a}-a_i\mb{e}_i,\mathbf{b}-\sum_{j=1}^{k-1} \mathbf{e}_{x_{j}}+\sum_{j=1}^{a_i-k}\mathbf{e}_{u_{j}}),\mathbf{b})})- \vspace{0.1in} \\
 & & \log(1+\frac{(a_{j}-1)\sum_{x_{1}\neq...\neq x_{a_{j-1}}=x_{a_j}} \Pi_{k=1}^{a_j} f(\mathbf{e}_{x_{k}},\mathbf{e}_{u_{k}},(\mathbf{a}-a_je_j,\mathbf{b}-\sum_{j=1}^{k-1} \mathbf{e}_{x_{j}}+\sum_{j=1}^{a_i-k}\mathbf{e}_{u_{j}}),\mathbf{b}) }{\sum_{x_{1}\neq...\neq x_{a_{j-1}}\neq x_{a_j}} \Pi_{k=1}^{a_j} f(\mathbf{e}_{x_{k}},\mathbf{e}_{u_{k}},(\mathbf{a}-a_je_j,\mathbf{b}-\sum_{j=1}^{k-1} \mathbf{e}_{x_{j}}+\sum_{j=1}^{a_i-k}\mathbf{e}_{u_{j}}),\mathbf{b})}))
\end{array}
\]
for an arbitrary choice of $u_{k}$. A similar result holds, with $g$ depending on $\mathbf{b}$, if $\mathbf{\sigma}=\mathbf{b}$.
\end{thm}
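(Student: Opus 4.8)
The plan is to follow the proof of Theorem~\ref{thm:arborder1} almost verbatim, changing only the bookkeeping of error orders and substituting the weaker additive hypothesis on $z$ for the multiplicative assumption~(\ref{eq:assumption}). I would begin from the exponential form of~(\ref{eq:cor5}),
$$\frac{\|G_{\mathbf{d_{-i}}}\|}{\|G_{\mathbf{d_{-j}}}\|}=\frac{a_{i}}{a_{j}}\exp\left(\log\left(1+\frac{\|G_{X_{1_i}}\|}{a_{i}\|G_{X_{0_i}}\|}\right)-\log\left(1+\frac{\|G_{X_{1_j}}\|}{a_{j}\|G_{X_{0_j}}\|}\right)\right),$$
and use~(\ref{eq:GX}) together with the telescoping identity $\|G_{\mathbf{x}}\|/\|G_{\mathbf{u}}\|=\Pi_{k=1}^{a_i}\phi(\cdots)$ to rewrite $\frac{\|G_{X_{1_i}}\|}{a_i\|G_{X_{0_i}}\|}$ as a ratio of sums of products of the exact ratios $\phi$. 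As in~(\ref{eq:delta}), I would then define $\Delta_i$ to be the difference between this quantity evaluated with the exact $\phi$ and the same quantity evaluated with the approximation $f$, and reduce the claim to showing $\Delta_i=O(S^{-\frac{1}{2}-(w+2)\tau})$; the corresponding statement for the $j$-side term $\frac{\|G_{X_{1_j}}\|}{a_j\|G_{X_{0_j}}\|}$ is identical, and feeding both through the $\log$ and $\exp$ produces the stated $g$.

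The main estimate proceeds exactly as before. Writing $\phi_k=f_k(1+\xi_k)$, the only change is that now $\xi_k=O(S^{-\frac{1}{2}-w\tau})$ comes from the hypothesis on $f$, rather than $O(S^{-2w\tau})$. Introducing the indicator weights $\delta_k$ and the compensatory term $\epsilon=\epsilon_1+\epsilon_2-\epsilon_3-\epsilon_4$ of~(\ref{eq:epsilon}), applying the crude bounds $f_k(x_k)=\tfrac{b_{x_k}}{b_{u_k}}(1+O(S^{-2\tau}))$ for $k=a_i-1,a_i$ that follow from Corollary~\ref{cor:expand}, and integrating out the last two summation variables via $\sum_m b_{u_k}\phi_k(x_m,\mathbf{b})=S(1+O(S^{-2\tau}))$, I obtain the same quotient of sums over $D_{0*}$ and $D_{1*}$ as in Theorem~\ref{thm:arborder1}. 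Since $d_{max}^2/S=O(S^{-2\tau})$, the single surviving factor of $\xi_k$ in the numerator yields
$$\Delta_i=O(S^{-2\tau})\cdot O(S^{-\tfrac{1}{2}-w\tau})=O(S^{-\tfrac{1}{2}-(w+2)\tau}),$$
which is precisely the sharpening claimed.

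The one genuinely new ingredient is the concluding step that validates the simplification $f\mapsto f_k(x_k)$: namely, that the arbitrary auxiliary indices $u_k$ do not affect the value of $g$ and that the degree-sequence argument $\mathbf{d}_k=(\mathbf{a}-a_i\mathbf{e}_i-\mathbf{e}_j,\mathbf{b}-\sum_{j=1}^{a_i-k+1}\mathbf{e}_{u_j}-\sum_{j=1}^{k-1}\mathbf{e}_{x_j})$ carried by each factor may be replaced by a canonical one. In Theorem~\ref{thm:arborder1} this rested on the factorization $f=h(\mathbf{e}_i,\cdot)/h(\mathbf{e}_j,\cdot)$ and the ratio bound~(\ref{eq:assumption}); here I would instead invoke the hypothesis $f(\cdot,\mathbf{d}_0,\cdot)=f(\cdot,\mathbf{d}_1,\cdot)+z$ with $z\leq O(S^{-\frac{1}{2}-\tau})f$ directly. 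Because each $\mathbf{d}_k$ differs from a fixed reference sequence by $\|\mathbf{d}_k-\mathbf{d}\|_1=O(a_i)=O(S^{\frac{1}{2}-\tau})=m$ in taxicab norm, the assumption permits replacing every $\mathbf{d}_k$ by the reference at a relative cost of $(1+O(S^{-\frac{1}{2}-\tau}))$ per factor.

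The hard part will be controlling how these per-factor degree-sequence corrections accumulate across a product of $a_i=O(S^{\frac{1}{2}-\tau})$ terms without overwhelming the sharpened error $O(S^{-\frac{1}{2}-(w+2)\tau})$. The saving feature — and the point that must be made rigorous using only the additive $z$-bound, rather than the finer multiplicative structure available in Theorem~\ref{thm:arborder1} — is that these corrections enter the numerator sum over $D_1$ and the denominator sum over $D_0$ in the same multiplicative fashion and hence cancel to leading order in the ratio defining $\Delta_i$; the surviving contribution comes only from the asymmetry between $D_0$ and $D_1$, namely the two coincident indices of $D_1$, so that at most a bounded number of factors genuinely feel the perturbation. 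Verifying that this residual asymmetry, together with the $u_k$-coincidence terms in which the out-degree of some $x_k$ is shifted by one, contributes only at order $O(S^{-\frac{1}{2}-(w+2)\tau})$ or smaller is the crux of the argument, and it is exactly where the bound $z\leq O(S^{-\frac{1}{2}-\tau})f$ must be deployed with care.
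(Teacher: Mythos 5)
Your outline correctly mirrors the skeleton of the paper's argument (same $\Delta_i$, same $\phi_k=f_k(1+\xi_k)$ with $\xi_k=O(S^{-\frac{1}{2}-w\tau})$, same $\epsilon$-bookkeeping), and you have correctly located where the new difficulty sits. But the proposal stops exactly where the new work begins, and there is an internal circularity: in your second paragraph you assert that ``the main estimate proceeds exactly as before'' and already extract $\Delta_i=O(S^{-2\tau})\cdot O(S^{-\frac{1}{2}-w\tau})$ by integrating out the last two variables via the mean value theorem, yet that step is only legitimate if $f_{a_i}$ and $f_{a_i-1}$ are independent of $x_1,\ldots,x_{k-1}$ --- which in Theorem \ref{thm:arborder1} came from the $h$-factorization assumption (\ref{eq:assumption}) and is precisely what is \emph{not} available here. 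Your fourth paragraph then concedes that verifying this ``is the crux of the argument'' and must be ``deployed with care,'' without doing it. So the one estimate that distinguishes this theorem from Theorem \ref{thm:arborder1} is asserted twice but proved nowhere.

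For comparison, the paper's Appendix C handles this by (i) introducing a vector $\mathbf{\eta}\neq\mathbf{0}$ of indicators to isolate the terms carrying at least one $\xi_k$, (ii) decomposing only the last two factors as $f_k=\mathfrak{i}_k+z_k$ with $\mathfrak{i}_k$ independent of the earlier variables and $z_k\leq O(S^{-\frac{1}{2}-\tau})f_k$ supplied by the hypothesis on $z$, (iii) applying the mean value theorem to the pure-$\mathfrak{i}$ part so that the leading cross-product terms cancel exactly, and (iv) bounding the residual $\Omega$ through a case table of the eleven mixed $\mathfrak{i}$/$z$ combinations, each of which carries at least one factor $z_k$ and hence contributes at most $O(S^{-\frac{1}{2}-4\tau-w\tau})$, strictly higher order than the target $O(S^{-\frac{1}{2}-2\tau-w\tau})$. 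Your intuition that ``at most a bounded number of factors genuinely feel the perturbation'' is consistent with (ii), but without carrying out (ii)--(iv) the claimed sharpening $O(S^{-\frac{1}{2}-(w+2)\tau})$ is not established.
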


The proof is very similar to the prior theorem so we leave the details to Appendix C.  
So in order to construct asymptotics for the ratio of the number of graphs of two slightly different degree sequences, starting with our approximation $\frac{\|G_{\mathbf{d_{-i}}}\|}{\|G_{\mathbf{d_{-j}}}\|}=\frac{a_i}{a_j}(1+O(S^{-2\tau}))$, we apply Theorem \ref{thm:arborder1}, obtaining stronger approximations until we reach an approximation with a multiplicative error of $(1+O(S^{-\frac{1}{2}}))$ from the solution.  We can then apply Theorem \ref{thm:arborder2} to construct approximations that are arbitrarily accurate.  As mentioned before, this argument requires that the additional assumptions of Theorem \ref{thm:arborder1} are true, which we will prove in Section 6.  

We now provide a general method for constructing asymptotics for $\|G_{\mathbf{d}}\|$ from asymptotics for $\|G_{\mathbf{d_{}}}\|/\|G_{\mathbf{d_{*}}}\|$.
We start our derivation, as we did in Section 2, by considering a special case where it is particularly easy to count the number of graphs that realize a specified degree sequence.

\begin{cor}
\label{cor:count}  Consider the degree sequence $\mathbf{(a, b)}\in\mathbb{Z}^{N\times 2}$ where $1$ appears exactly $S$ times in $\mathbf{b}$ and $0$ appears exactly $N-S$ times in $\mathbf{b}$. Then $$\|G_{(\mathbf{a},\mathbf{b})}\|=\frac{S!}{\Pi_{i=1}^{N}a_{i}!}$$ 
\end{cor}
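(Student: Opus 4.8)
The plan is to reduce the count to a single multinomial coefficient by exploiting the fact that every out-degree listed in $\mathbf{b}$ is either $0$ or $1$. First I would record the (implicit) compatibility condition $\sum_{i=1}^{N} a_i = S$, which is forced by $\sum_{i=1}^{N} b_i = S$ and is necessary for graphicality; should it fail, the sequence is not graphic and both sides are read as $0$.

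The heart of the argument is a direct bijection. Since exactly $S$ nodes have out-degree $1$ and the remaining $N-S$ have out-degree $0$, every realization is completely determined by choosing, for each of the $S$ out-degree-$1$ nodes, the unique node to which it sends its single edge. I would encode this as a function from the $S$ ``source'' nodes into $\{1,\ldots,N\}$, whereupon the requirement that node $i$ have in-degree $a_i$ becomes the requirement that exactly $a_i$ sources map to $i$. Because each source emits a single edge, it cannot produce two edges into the same target, so the ``at most one edge per ordered pair'' restriction is automatically satisfied; loops are permitted, so self-assignments cause no difficulty either. Hence the realizations are in bijection with the ways of recording which $a_i$ of the $S$ sources point to each node $i$, and the number of these is $\binom{S}{a_1,\ldots,a_N} = S!/\prod_{i=1}^{N} a_i!$, with the convention $0!=1$ so that the $N-S$ nodes of in-degree $0$ contribute trivially.

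Alternatively, I would obtain the same conclusion directly from Lemma \ref{lem:prelim}. The present hypothesis is precisely the first case of that lemma specialized to the situation in which no out-degree exceeds $1$, i.e.\ the block of entries equal to $k$ in $\mathbf{b}$ is empty, so that $q=0$. Substituting $q=0$ into the count $(S-qk)!/\prod_i (a_i-q)!$ collapses it to $S!/\prod_i a_i!$, and the hypothesis $p\geq q$ holds vacuously since every $a_i\geq 0$. The only bookkeeping needed is to match the arbitrary in-degree vector $\mathbf{a}$ of the corollary to the block form assumed in Lemma \ref{lem:prelim}, which is supplied by the Remark following that lemma granting invariance under arbitrary permutations of $\mathbf{a}$ and $\mathbf{b}$.

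The statement is genuinely easy, so there is no serious obstacle; the only points deserving care are administrative. Specifically, I would make explicit that the loop and simple-edge constraints are vacuous under the out-degree bound of $1$ (which is exactly what makes the count an exact multinomial rather than requiring a delicate inclusion--exclusion over forbidden repeated edges), and I would note that the degenerate $q=0$ instance of Lemma \ref{lem:prelim} is being invoked legitimately.
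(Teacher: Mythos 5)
Your proposal is correct, and your second argument is exactly the paper's proof: the paper disposes of this corollary in one line by citing Lemma \ref{lem:prelim} with $q=0$ and $k=N$, which is precisely your specialization (your appeal to the permutation remark handles the block-form bookkeeping just as legitimately as taking $k=N$ does). Your first paragraph's direct bijection with functions from the $S$ sources to $\{1,\ldots,N\}$ having prescribed fiber sizes is a sound, self-contained alternative that makes explicit why no inclusion--exclusion over repeated edges is needed, but it proves the same thing by essentially the same counting.
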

\begin{proof}
The result follows immediately from  Lemma 1 where $q = 0$ and $k = N$.
\end{proof}

\begin{thm}
\label{thm:graphcount}
Define a function $\rho:\mathbb{N}\rightarrow\mathbb{N}$ such that 
$\mathbf{b}=\sum_{k=1}^{S}\mathbf{e}_{\rho(k)}$.  Suppose that for each $i\in \{ 1, \ldots, S \}$, the following approximation holds:
 \begin{equation}
 \label{eq:rat_approx}
 \frac{\|G_{(\mathbf{a},\sum_{k=1}^{S-i}\mathbf{e}_{k}+\sum_{k=1}^{i}e_{\rho(k)})}\|}{\|G_{(\mathbf{a}, \sum_{k=1}^{S-i+1}\mathbf{e}_{k}+\sum_{k=1}^{i-1}\mathbf{e}_{\rho(k)})}\|}(1+O(S^{-1-\epsilon}))=f(i)
 \end{equation}
  for some $\epsilon>0$.
Then $$\|G_{\mathbf{d}}\| = (1+O(S^{-\epsilon}))\frac{S!}{\Pi_{i=1}^{N}a_i!}\Pi_{i=1}^{S}f(i).$$
\end{thm}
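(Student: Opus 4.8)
The plan is to realize $\mathbf{d}=(\mathbf{a},\mathbf{b})$ as the endpoint of a chain of single-edge modifications that starts from a sequence whose graph count is given in closed form by Corollary \ref{cor:count}, and then to telescope the hypothesized ratios along this chain. Concretely, for $i=0,1,\ldots,S$ I would set
$$\mathbf{b}^{(i)}=\sum_{k=1}^{S-i}\mathbf{e}_{k}+\sum_{k=1}^{i}\mathbf{e}_{\rho(k)},$$
so that $\mathbf{b}^{(0)}=\sum_{k=1}^{S}\mathbf{e}_{k}$ is exactly the out-degree sequence of Corollary \ref{cor:count} (a one in each of the first $S$ slots), while $\mathbf{b}^{(S)}=\sum_{k=1}^{S}\mathbf{e}_{\rho(k)}=\mathbf{b}$ is the target. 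Passing from $\mathbf{b}^{(i-1)}$ to $\mathbf{b}^{(i)}$ removes one out-edge from node $S-i+1$ and supplies one to node $\rho(i)$, so the two sequences differ by $2$ in taxicab norm and the ratio $\|G_{(\mathbf{a},\mathbf{b}^{(i)})}\|/\|G_{(\mathbf{a},\mathbf{b}^{(i-1)})}\|$ is precisely the quantity estimated by $f(i)$ in hypothesis (\ref{eq:rat_approx}).

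First I would telescope:
$$\|G_{\mathbf{d}}\|=\|G_{(\mathbf{a},\mathbf{b}^{(0)})}\|\,\prod_{i=1}^{S}\frac{\|G_{(\mathbf{a},\mathbf{b}^{(i)})}\|}{\|G_{(\mathbf{a},\mathbf{b}^{(i-1)})}\|}.$$
Corollary \ref{cor:count} evaluates the prefactor as $\|G_{(\mathbf{a},\mathbf{b}^{(0)})}\|=S!/\prod_{i=1}^{N}a_i!$, and the hypothesis rewrites each factor as $\|G_{(\mathbf{a},\mathbf{b}^{(i)})}\|/\|G_{(\mathbf{a},\mathbf{b}^{(i-1)})}\|=f(i)\,(1+O(S^{-1-\epsilon}))$. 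Substituting gives
$$\|G_{\mathbf{d}}\|=\frac{S!}{\prod_{i=1}^{N}a_i!}\left(\prod_{i=1}^{S}f(i)\right)\prod_{i=1}^{S}\bigl(1+O(S^{-1-\epsilon})\bigr),$$
so it remains only to show that the accumulated error factor $\prod_{i=1}^{S}(1+O(S^{-1-\epsilon}))$ collapses to $1+O(S^{-\epsilon})$.

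The hard part — and the only genuine estimate in the argument — is controlling this product of $S$ nearly-trivial factors. Writing each factor as $1+c_iS^{-1-\epsilon}$ with the $c_i$ bounded by a single constant $C$ (this uniformity in $i$ is exactly what the $O$-notation in (\ref{eq:rat_approx}) must be read to assert), I would pass to logarithms and use $\log(1+x)=x+O(x^2)$:
$$\log\prod_{i=1}^{S}\bigl(1+c_iS^{-1-\epsilon}\bigr)=\sum_{i=1}^{S}\Bigl(c_iS^{-1-\epsilon}+O(S^{-2-2\epsilon})\Bigr)=O\bigl(S\cdot S^{-1-\epsilon}\bigr)+O\bigl(S^{-1-2\epsilon}\bigr)=O(S^{-\epsilon}).$$
Exponentiating yields $\prod_{i=1}^{S}(1+O(S^{-1-\epsilon}))=1+O(S^{-\epsilon})$, and combining this with the telescoping identity produces
$$\|G_{\mathbf{d}}\|=(1+O(S^{-\epsilon}))\,\frac{S!}{\prod_{i=1}^{N}a_i!}\prod_{i=1}^{S}f(i),$$
as claimed. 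The essential point to be careful about is that the per-step relative error $O(S^{-1-\epsilon})$ is summable over the $S$ steps precisely because it carries one extra power of $S$ beyond the final tolerance $O(S^{-\epsilon})$; any weaker per-step bound, such as $O(S^{-\epsilon})$ alone, would fail to telescope, which is why the hypothesis is stated with the sharper $S^{-1-\epsilon}$ rate.
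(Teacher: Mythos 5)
Your proof is correct and follows essentially the same route as the paper's: telescope the ratios along the chain $\mathbf{b}^{(0)}=\mathbf{1}_S \to \mathbf{b}$, evaluate the base case via Corollary \ref{cor:count}, and absorb the $S$ per-step errors of size $O(S^{-1-\epsilon})$ into a single $1+O(S^{-\epsilon})$ factor (you spell out this last step more explicitly than the paper does). The only detail you omit is the paper's remark that when $S>N$ one first pads $\mathbf{d}$ with zero entries so that $\sum_{k=1}^{S}\mathbf{e}_k$ makes sense.
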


\begin{proof}
For simplicity, we first suppose that $S \leq N$. 
Write $\mathbf{1}_{S}=\sum_{k=1}^{S}\mathbf{e}_{k}$ where $\mathbf{e}_{k}$ is the kth standard unit vector.
From Corollary \ref{cor:count}, 
\begin{equation}
\label{eq:cor6}
\|G_{(\mathbf{a},\mathbf{1}_{S})}\|=\frac{S!}{\Pi_{i=1}^{N}a_{i}!}.
\end{equation}
Consequently,
\begin{equation}
\|G_{\mathbf{d}}\|=\|G_{(\mathbf{a},\mathbf{1}_{S})}\|\Pi_{i=1}^{S}\frac{\|G_{(\mathbf{a}, \sum_{k=1}^{S-i}\mathbf{e}_{k}+\sum_{k=1}^{i}\mathbf{e}_{\rho(k)})}\|}{\|G_{(\mathbf{a}, \sum_{k=1}^{S-i+1}\mathbf{e}_{k}+\sum_{k=1}^{i-1}\mathbf{e}_{\rho(k)})}\|}.
\end{equation}
But by assumption (\ref{eq:rat_approx}) and equation (\ref{eq:cor6}), we have that
\begin{equation}
\|G_{\mathbf{(\mathbf{a},\mathbf{1}_{S})}}\|\Pi_{i=1}^{S}\frac{\|G_{(\mathbf{a}, \sum_{k=1}^{S-i}\mathbf{e}_{k}+\sum_{k=1}^{i}\mathbf{e}_{\rho(k)})}\|}{\|G_{(\mathbf{a}, \sum_{k=1}^{S-i+1}\mathbf{e}_{k}+\sum_{k=1}^{i-1}\mathbf{e}_{\rho(k)})}\|}=\frac{S!}{\Pi_{i=1}^{N} a_i!}\Pi_{i=1}^{S}f(i)(1+O(S^{-1-\epsilon}))=(1+O(S^{-\epsilon}))\frac{S!}{\Pi_{i=1}^{N} a_i!}\Pi_{i=1}^{S}f(i).
\end{equation}

Finally, in the event that $S > N$,we extend $\mathbf{d}\in\mathbb{Z}^{N\times 2}$ to a bidegree sequence $\mathbf{d_{*}}\in\mathbb{Z}^{S\times 2}$ by appending zeros to $\mathbf{d}$. The proof then proceeds analogously.

\end{proof}

In the following section we will apply Theorems \ref{thm:arborder1}, \ref{thm:arborder2} and \ref{thm:graphcount} to construct asymptotics for the number of graphs of a given degree sequence.  The cases worked out in the following section will not only provide extra intution about the veracity of the assumptions of Theorem \ref{thm:arborder1}, but will also lay the groundwork for the rigorous proof that the assumptions are indeed true.

\section{Higher Order Approximations}
\label{section:higherorder}
We start with a technical lemma that will help us evaluate summations of the form $\sum_{x_1\neq...\neq x_r}^{N} \Pi_{i=1}^{r} f(x_i)$ where the notation $\sum_{x_1\neq...\neq x_r}^{N}$ tells us that for each $i \in \{1,2,..,r\}$,  we take $x_i\in \{1,2,...,N\}$ and we are summing over all $\binom{N}{r}$ choices of $r$ distinct elements in $\{1,2,...,N\}$. 

\begin{lem}\label{lem:count}
Suppose that $ f : \mathbb{N} \to [0,\infty)$, $\sum_{i=1}^{N}f(i)=S$,   
$\max f(\cdot)=O(S^{\frac{1}{2}-\tau})$, and that $x_1, \ldots, x_r$ is a sequence of natural numbers with $r=O(S^{\frac{1}{2}-\tau})$.  Let $k$ be a fixed (i.e., $O(1)$) natural number and 
define 
\[
\xi = \sum_{\substack{x_{1},..,x_{k},\\x_{k+1}\neq...\neq x_{r}}}^{N}\Pi_{i=1}^{r} f(x_{i}). 
\]
Then $$\sum_{x_{1}\neq...\neq x_{r}}^{N}\Pi_{i=1}^{r} f(x_{i})=\xi(1+O( S^{-2\tau})).$$

\end{lem}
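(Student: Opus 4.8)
The plan is to prove the estimate by comparing the two sums \emph{directly to one another}, freeing up one of the first $k$ variables at a time, rather than comparing either of them to the ``all free'' quantity $S^r=\prod_{i=1}^r\big(\sum_{x_i}f(x_i)\big)$. The latter comparison is the tempting but wrong move: when $r=O(S^{\frac12-\tau})$ the fully distinct sum can be a vanishing fraction of $S^r$. For instance, in the near-regular case $f\equiv d$ with $N=S/d\geq S^{\frac12+\tau}/c$ one computes $\big(\sum_{x_1\neq\cdots\neq x_r}\prod f\big)/S^r=\prod_{t=0}^{r-1}(1-t/N)=\exp(-\Theta(r^2/N))$, which tends to $0$ once $\tau$ is small. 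Hence any bound on the defect measured against $S^r$ is far too lossy, and one must instead measure each defect against the relevant (larger) partial sum.

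Concretely, for $0\le j\le k$ I would define
\[
\xi_j=\sum_{\substack{x_1,\ldots,x_j\\ x_{j+1}\neq\cdots\neq x_r}}^{N}\ \prod_{i=1}^r f(x_i),
\]
so that $\xi_0=\sum_{x_1\neq\cdots\neq x_r}\prod f(x_i)$ is the fully distinct sum and $\xi_k=\xi$. Passing from $\xi_{j-1}$ to $\xi_j$ relaxes exactly one constraint: that $x_j$ be distinct from $x_{j+1},\ldots,x_r$. The goal is to show $\xi_j=\xi_{j-1}(1+O(S^{-2\tau}))$ for each $j$; since $k=O(1)$, chaining the steps gives $\xi=\xi_0\,(1+O(S^{-2\tau}))^k=\xi_0\,(1+O(S^{-2\tau}))$, which is the claimed identity.

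For the single step I would sum out $x_j$. In $\xi_j$ the variable $x_j$ is unconstrained, so $\xi_j=S\cdot R_j$ with $R_j=\sum_{\substack{x_1,\ldots,x_{j-1}\\ x_{j+1}\neq\cdots\neq x_r}}\prod_{i\neq j}f(x_i)$; in $\xi_{j-1}$ the variable $x_j$ must avoid the pairwise-distinct values $x_{j+1},\ldots,x_r$, contributing the factor $\sum_{x_j\notin\{x_{j+1},\ldots,x_r\}}f(x_j)=S-\sum_{l=j+1}^r f(x_l)$. Subtracting,
\[
\xi_j-\xi_{j-1}=\sum_{\substack{x_1,\ldots,x_{j-1}\\ x_{j+1}\neq\cdots\neq x_r}}\Big(\sum_{l=j+1}^{r}f(x_l)\Big)\prod_{i\neq j}f(x_i).
\]
For each fixed $l$ one has $f(x_l)\prod_{i\neq j}f(x_i)=f(x_l)^2\prod_{i\neq j,\,l}f(x_i)\le(\max f)\prod_{i\neq j}f(x_i)$, using $\max f=O(S^{\frac12-\tau})$. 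Summing over the configurations and over the at most $r=O(S^{\frac12-\tau})$ values of $l$ then gives $\xi_j-\xi_{j-1}\le r\,(\max f)\,R_j=r\,(\max f)\,\xi_j/S=O(S^{-2\tau})\,\xi_j$. Combined with $\xi_{j-1}\le\xi_j$ (removing a constraint can only increase a sum of nonnegative terms), this yields $\xi_{j-1}=\xi_j(1+O(S^{-2\tau}))$, as needed.

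Two loose ends remain. First, I would note that the sums are genuinely positive and division is legitimate: the number of indices with $f(i)>0$ is at least $S/\max f=\Omega(S^{\frac12+\tau})\gg r$, so $\xi_0>0$. Second, the constant hidden in each $O(S^{-2\tau})$ is uniform in $j$, so the $k$-fold product stays $O(S^{-2\tau})$ precisely because $k=O(1)$. The only real obstacle is the conceptual one flagged at the outset: recognizing that the ``compare to $S^r$'' route is hopeless, and that the correct device is to liberate variables one at a time so that the step error appears cleanly as $r\cdot\max f/S=O(S^{-2\tau})$; everything after that recognition is routine.
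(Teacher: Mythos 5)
Your proof is correct and follows essentially the same route as the paper's: both free one of the first $k$ variables at a time and bound the resulting defect by $r\cdot\max f/S=O(S^{-2\tau})$ relative to the partial sum, your explicit difference $\xi_j-\xi_{j-1}=\sum\big(\sum_l f(x_l)\big)\prod_{i\neq j}f(x_i)$ being exactly the paper's $(r-1)\sum_{x_1=x_2\neq\cdots\neq x_r}\prod f(x_i)$ correction term written out. The only differences are cosmetic (your bookkeeping of which quantity each $O(S^{-2\tau})$ is measured against is somewhat cleaner, and the opening remark about why comparing to $S^r$ fails is a useful observation the paper does not make explicit).
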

\begin{proof} We proceed by induction on $k$.  Start with $k=1$. So   
\begin{equation}
\label{eq:induct}
\sum_{x_{1}\neq...\neq x_{r}}^{N}\Pi_{i=1}^{r} f(x_{i})=\sum_{x_{1},x_{2}\neq...\neq x_{r}}^{N}\Pi_{i=1}^{r} f(x_{i})-(r-1)\sum_{x_{1}=x_{2}\neq...\neq x_{r}}^{N}\Pi_{i=1}^{r} f(x_{2}).
\end{equation}  
 But the bounds on $r$ and $\max f(\cdot)$ imply that 
\begin{equation}
\label{eq:upbound}
(r-1)\sum_{x_{1}=x_{2}\neq...\neq x_{r}}^{N}\Pi_{i=1}^{r} f(x_{i})\leq O(S^{1-2\tau})\sum_{x_{2}\neq...\neq x_{r}}^{N}\Pi_{i=2}^{r} f(x_{i}),
\end{equation}
 while 
\begin{equation}
\label{eq:lowbound}
 \sum_{x_{1},x_{2}\neq...\neq x_{r}}^{N}\Pi_{i=1}^{r} f(x_{i})\geq O(S)\sum_{x_{2}\neq...\neq x_{r}}^{N}\Pi_{i=1}^{r} f(x_{i}).
 \end{equation}
Using (\ref{eq:upbound}) and (\ref{eq:lowbound}) to express the right hand side of (\ref{eq:induct}) in terms of $\xi$ yields the desired result  for $k=1$.
 
   Now assume that the lemma is true when $k=m$ and consider the case $k=m+1$.  We have $$\sum_{x_{1}\neq...\neq x_{r}}^{N}\Pi_{i=1}^{r} f(x_{i})=\sum_{\substack{x_{1},..,x_{m},\\x_{m+1}\neq...\neq x_{r}}}^{N}\Pi_{i=1}^{r} f(x_{i})+O(S^{-2\tau})=$$ $$\sum_{\substack{x_{1},..,x_{m},x_{m+1},\\x_{m+2}\neq...\neq x_{r}}}^{N}\Pi_{i=1}^{r} f(x_{i})+O(S^{-2\tau})-(r-m-1)\sum_{\substack{x_{1},...,x_{m},\\x_{m+1}=x_{m+2}\\\neq...\neq x_{r}}}^{N}\Pi_{i=1}^{r}f(x_{i}).$$  By applying the estimates (\ref{eq:upbound}), (\ref{eq:lowbound}), it follows as before that $$(r-m-1)\sum_{\substack{x_{1},...,x_{m},\\x_{m+1}=x_{m+2}\\\neq...\neq x_{r}}}^{N}\Pi_{i=1}^{r}f(x_{i})=O(S^{-2\tau}),$$ and the proof is complete.

\end{proof}

\begin{cor}
\label{cor:count2}
With the notation of Lemma \ref{lem:count}, 
\textcolor{black}{let $k > m>1$ be fixed natural numbers, let $f, g:\mathbb{N}\rightarrow[0,\infty)$ with $g(\cdot)\leq f(\cdot)$, $\sum_{i=1}^{N}f(i)=S$, and $\max f(\cdot)=O(S^{\frac{1}{2}-\tau})$,  and let $r=O(S^{\frac{1}{2}-\tau})$}.
Define $$\zeta = \sum_{\substack{x_{1}=...=x_{m},\\x_{m+1},...,x_{k},\\x_{k+1}\neq...\neq x_{r}}}^{N}g(x_{1})\Pi_{i=2}^{r} f(x_{i}).$$
Then $$\sum_{x_{1}=...=x_{m}\neq...\neq x_{r}}^{N}g(x_{1})\Pi_{i=2}^{r} f(x_{i})=\zeta(1+O( S^{-2\tau})).$$

\end{cor}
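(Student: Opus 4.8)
The plan is to reduce the claim to an instance of Lemma \ref{lem:count} by collapsing the forced-equal block into a single super-variable, and then to rerun the inductive relaxation argument of that lemma. In both the left-hand sum and $\zeta$ the constraint $x_1=\cdots=x_m$ forces a common value $v:=x_1=\cdots=x_m$, so the summand collapses to $G(v)\,\Pi_{i=m+1}^{r}f(x_i)$, where $G(v):=g(v)f(v)^{m-1}\geq 0$. Thus it suffices to compare two sums over the $r-m+1$ variables $(v,x_{m+1},\ldots,x_r)$: the target sum, in which all of these are pairwise distinct, and $\zeta$, in which $v,x_{m+1},\ldots,x_k$ are free while $x_{k+1},\ldots,x_r$ remain pairwise distinct. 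Since $k,m$ are fixed, exactly $k-m+1=O(1)$ of the variables get freed in passing from the target to $\zeta$.

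First I would free one variable at a time, exactly as in the induction of Lemma \ref{lem:count}, using inclusion--exclusion to write the more-constrained sum as the less-constrained one minus the correction arising from the newly allowed coincidences. For a freed variable carrying the weight $f$ (that is, one of $x_{m+1},\ldots,x_k$), the estimates (\ref{eq:upbound})--(\ref{eq:lowbound}) apply verbatim: a single coincidence contributes an extra factor $f\leq\max f=O(S^{\frac12-\tau})$, there are $O(r)=O(S^{\frac12-\tau})$ such coincidences, and the less-constrained sum is bounded below by $O(S)$ times the sum with that variable removed, so the relative size of the correction is $O(S^{1-2\tau})/O(S)=O(S^{-2\tau})$.

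The one new case is freeing the block variable $v$. Here a coincidence $v=x_j$ replaces the weight $G(v)$ by $G(v)f(v)=g(v)f(v)^{m}=f(v)\,G(v)$, i.e. it again produces a single surplus factor $f(v)\leq\max f=O(S^{\frac12-\tau})$ relative to $G(v)$; peeling off this factor reduces the estimate to precisely the situation of (\ref{eq:upbound})--(\ref{eq:lowbound}), so the relative correction is once more $O(S^{-2\tau})$ (the hypothesis $g\leq f$, together with $f,g\geq 0$, keeps $G$ nonnegative and dominated by $f^{m}$ throughout). Composing the $O(1)$ relaxation steps, each of which multiplies the sum by a factor $1+O(S^{-2\tau})$, yields $(1+O(S^{-2\tau}))^{O(1)}=1+O(S^{-2\tau})$, which is the claim. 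I expect the main obstacle to be exactly this block-variable step: because $G=gf^{m-1}$ does not itself satisfy the $O(S^{\frac12-\tau})$ bound demanded by Lemma \ref{lem:count}, one cannot invoke that lemma as a black box, and the crux is the observation that merging $v$ with an ordinary index still introduces only a \emph{single} extra factor of $f$, so the same $r\cdot\max f/S=O(S^{-2\tau})$ bookkeeping survives.
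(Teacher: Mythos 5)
Your proof is correct and follows the same route as the paper, whose own proof simply asserts that the argument of Lemma \ref{lem:count} carries over verbatim. Your explicit treatment of the collapsed block variable $v$ with weight $G(v)=g(v)f(v)^{m-1}$ --- observing that a coincidence $v=x_j$ still contributes only a \emph{single} surplus factor of $f$, so the $r\cdot\max f/S=O(S^{-2\tau})$ bookkeeping survives even though $G$ itself need not satisfy the hypotheses of Lemma \ref{lem:count} --- supplies exactly the detail the paper leaves implicit.
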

\begin{proof}
The proof is identical to that of Lemma \ref{lem:count}.
\end{proof}
For use in later proofs, it is worth noting that while the error terms in Lemma \ref{lem:count} (and analogously in Corollary \ref{cor:count2}) are expressed as  
\[
O(S^{-2\tau})\sum_{\substack{x_{1},..,x_{k},\\x_{k+1}\neq...\neq x_{r}}}^{N}\Pi_{i=1}^{r} f(x_{i}),
\]
they can also be stated in terms of $O(S^{-2\tau})\sum_{x_{1}\neq...\neq x_{r}}^{N}\Pi_{i=1}^{r} f(x_{i})$ since asymptotically (as the above results imply), 
\[
\sum_{x_{1}\neq...\neq x_{r}}^{N}\Pi_{i=1}^{r} f(x_{i})\approx \sum_{\substack{x_{1},..,x_{k},\\x_{k+1}\neq...\neq x_{r}}}^{N}\Pi_{i=1}^{r} f(x_{i}).
\]

With Lemma \ref{lem:count} and Corollary \ref{cor:count2} at our disposal, we can now derive the first order approximation of our power series with relative ease, but first we introduce some additional notation.
\begin{mydef}
\label{def:alphabeta}
Given a bidegree sequence $\mathbf{d}=(\mathbf{a},\mathbf{b})\in \mathbb{Z}^{N \times 2}$, we denote $\alpha_k:=\sum_{i=1}^{N}a_{i}^{k}$ and 
$\beta_k:=\sum_{i=1}^{N}b_{i}^{k}$. 
\end{mydef}

\begin{thm}
\label{thm:S4}
If $\max_{i} \max(a_{i},b_{i})=O(S^{\frac{1}{2}-\tau})$ for $\tau>0$, then 
$$\frac{\|G_{d_{-i}}\|}{\|G_{d_{-j}}\|}= \frac{a_{i}}{a_{j}}e^{(a_{i}-a_{j})\epsilon+O(S^{-4\tau})}$$ where, $\epsilon=(\beta_{2}-\beta_{1})/\beta_{1}^2$ and $(a_{i}-a_{j})\epsilon=O(S^{-2\tau})$. 
\end{thm}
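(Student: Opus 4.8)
The plan is to start from the exact identity of Corollary \ref{cor5},
$$\frac{\|G_{\mathbf{d_{-i}}}\|}{\|G_{\mathbf{d_{-j}}}\|}=\frac{a_{i}}{a_{j}}\left(\frac{1+R_i}{1+R_j}\right),\qquad R_i:=\frac{\|G_{X_{1_i}}\|}{a_i\|G_{X_{0_i}}\|},$$
and to reduce the whole statement to a single estimate: $R_i=(a_i-1)\epsilon\,(1+O(S^{-2\tau}))$ with $\epsilon=(\beta_2-\beta_1)/\beta_1^2$. Granting this, I first note $R_i=O(S^{-2\tau})$, since $a_i-1=O(S^{\frac12-\tau})$ and $\epsilon=O(S^{-\frac12-\tau})$ (using $\beta_2-\beta_1=\sum_j b_j(b_j-1)\le d_{max}\beta_1$). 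Because $(a_i-1)\epsilon=O(S^{-2\tau})$, the relative error $(a_i-1)\epsilon\cdot O(S^{-2\tau})$ is $O(S^{-4\tau})$, so $R_i-R_j=(a_i-a_j)\epsilon+O(S^{-4\tau})$; and since $R_i,R_j=O(S^{-2\tau})$ we may write $\log\frac{1+R_i}{1+R_j}=R_i-R_j+O(S^{-4\tau})$. Exponentiating gives exactly $\frac{a_i}{a_j}e^{(a_i-a_j)\epsilon+O(S^{-4\tau})}$, and $(a_i-a_j)\epsilon=O(S^{-2\tau})$ follows from the same size bounds.

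The heart of the argument is the estimate of $R_i$, which I would obtain by invoking the $w=1$ instance of Theorem \ref{thm:arborder1}: starting from the zeroth-order approximation $\|G_{\mathbf{d_{-i}}}\|/\|G_{\mathbf{d_{-j}}}\|=\tfrac{a_i}{a_j}(1+O(S^{-2\tau}))$ of Corollary \ref{cor:expand}, which corresponds to $h(\mathbf{e}_i,\mathbf{d},\mathbf{a})=a_i$ and hence to $f(\mathbf{e}_{x},\mathbf{e}_{u},\cdot,\mathbf{b})=b_{x}/b_{u}$ at the level of out-degrees. Assumption (\ref{eq:assumption}) is trivial here, since $h=a_i$ does not depend on the remaining degrees, so Theorem \ref{thm:arborder1} certifies that computing $g$ with this $f$ recovers the true ratio up to $O(S^{-4\tau})$. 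It then remains only to evaluate the explicit sums in $g$, i.e. $R_i$ with $f=b_x/b_u$, through equation (\ref{eq:GX}). The crucial subtlety lies in the numerator sum defining $\|G_{X_{1_i}}\|$: the node that is hit twice (the index with $x_{a_i-1}=x_{a_i}$) already has its out-degree reduced by one on the second hit in the residual degree sequence $\mathbf{d}_{k,i,j}$, so it contributes a factor $b(b-1)$ rather than $b^2$. This is precisely what converts the naive $\beta_2=\sum_j b_j^2$ into $\beta_2-\beta_1=\sum_j b_j(b_j-1)$.

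To carry out the evaluation I would write $P_r:=\sum_{x_1\neq\cdots\neq x_r}\Pi_{k=1}^{r}b_{x_k}$, so that the denominator of $R_i$ is of type $P_{a_i}$ (all removed edges on distinct nodes, no decrement). Peeling off the distinguished indices gives the exact combinatorial identity $(a_i-1)\,Q_{a_i-1}=\beta_1P_{a_i-1}-P_{a_i}$, where $Q_{a_i-1}=\sum_{x_1\neq\cdots\neq x_{a_i-1}}b_{x_{a_i-1}}^2\Pi_{k=1}^{a_i-2}b_{x_k}$, obtained by splitting $\beta_1P_{a_i-1}$ according to whether the free index collides with one of the $a_i-1$ distinct indices. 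Replacing $b^2$ by $b(b-1)$ subtracts a further $(a_i-1)P_{a_i-1}$, yielding $R_i=(\beta_1-a_i+1)\,P_{a_i-1}/P_{a_i}-1$. Using $\beta_1P_{a_i-1}/P_{a_i}=1+(a_i-1)\beta_2/\beta_1^2\,(1+O(S^{-2\tau}))$ and $P_{a_i-1}/P_{a_i}=\beta_1^{-1}(1+O(S^{-2\tau}))$ then collapses this to $R_i=(a_i-1)(\beta_2-\beta_1)/\beta_1^2\,(1+O(S^{-2\tau}))=(a_i-1)\epsilon\,(1+O(S^{-2\tau}))$, as required.

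I expect the main obstacle to be the error control in this last step. Since $a_i=O(S^{\frac12-\tau})$ is not $O(1)$, one cannot relax all of the distinctness constraints at once — that is the product-divergence phenomenon flagged after Theorem \ref{thm:arborder1}. The resolution is to keep everything as a ratio and let the common bulk $\sum_{x_1\neq\cdots\neq x_{a_i-2}}\Pi_{k=1}^{a_i-2}b_{x_k}$ cancel between numerator and denominator, so that Lemma \ref{lem:count} and Corollary \ref{cor:count2} are only ever applied to free up the fixed number (one or two) of distinguished variables, each relaxation costing a controlled relative error $O(S^{-2\tau})$. Propagating these $O(S^{-2\tau})$ relative errors through the identities above, while checking that the decrement bookkeeping producing $\beta_2-\beta_1$ is handled consistently in numerator and denominator, is the delicate part; the remaining algebra is routine.
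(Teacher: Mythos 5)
Your proposal is correct and follows essentially the same route as the paper's proof: both invoke Theorem \ref{thm:arborder1} with $w=1$ starting from the zeroth-order $h(\mathbf{e}_i,\mathbf{d},\mathbf{a})=a_i$, both isolate the key bookkeeping point that the doubly-hit node contributes $b(b-1)$ rather than $b^2$ (producing $\beta_2-\beta_1$), both control the distinctness constraints by freeing only an $O(1)$ number of variables and cancelling the common bulk via Lemma \ref{lem:count}/Corollary \ref{cor:count2}, and both close with the observation that an $O(S^{-2\tau})$ relative error on the $O(S^{-2\tau})$-sized quantity $R_i$ yields the additive $O(S^{-4\tau})$ in the exponent. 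The only difference is organizational -- you evaluate the sums through the exact peeling identity $(a_i-1)Q_{a_i-1}=\beta_1 P_{a_i-1}-P_{a_i}$ and then estimate $P_{a_i-1}/P_{a_i}$, whereas the paper applies Corollary \ref{cor:count2} directly and factors, but these are equivalent computations.
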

\begin{proof}
We start with the approximation $$\frac{\|G_{\mathbf{d}_{-i}}\|}{\|G_{\mathbf{d}_{-j}}\|}=f(\mathbf{e}_i,\mathbf{e}_j,\mathbf{d},\mathbf{a})(1+O(S^{-2\tau}))$$
for $f(\mathbf{e}_i,\mathbf{e}_j,\mathbf{d},\mathbf{a})=a_i/a_j$. We can apply Theorem \ref{thm:arborder1}, since our approximation depends only on the degrees of the nodes $i$ and $j$ and we can decompose our approximation, $f(\mathbf{e}_i,\mathbf{e}_j,\mathbf{d},\mathbf{a})=h(\mathbf{e}_i,\mathbf{d},\mathbf{a})/h(\mathbf{e}_j,\mathbf{d},\mathbf{a})$, where $h(\mathbf{e}_i,\mathbf{d},\mathbf{a})=a_i$;below, we shall also use $f(\mathbf{e}_i,\mathbf{e}_j,\mathbf{d},\mathbf{b})=h(\mathbf{e}_i,\mathbf{d},\mathbf{b})/h(\mathbf{e}_j,\mathbf{d},\mathbf{b})$, where $h(\mathbf{e}_i,\mathbf{d},\mathbf{b})=b_i$
For the remainder of the proof, we omit the dependence on $\mathbf{d}$ in our notation.

From the conclusion of Theorem \ref{thm:arborder1}, we need to evaluate $$\delta_{i} :=\frac{(a_{i}-1)\sum_{x_{1}\neq...\neq x_{a_{i-1}}=x_{a_i}} \Pi_{k=1}^{a_i} f(\mathbf{e}_{x_{k}},\mathbf{e}_{u_{k}},\mathbf{b}_k) }{\sum_{x_{1}\neq...\neq x_{a_{i-1}}\neq x_{a_i}} \Pi_{k=1}^{a_i} f(\mathbf{e}_{x_{k}},\mathbf{e}_{u_{k}},\mathbf{b}_k)}$$ 
and the analogously defined $\delta_j$, where 
\[
\mathbf{b}_k=\mathbf{b}-\sum_{j=1}^{a_i-k+1}\mathbf{e}_{u_j}-\sum_{j=1}^{k-1}\mathbf{e}_{x_j}
\]
 denotes the out-degree of the sequence $d_{k,i,j}$, as in the statement of Theorem \ref{thm:arborder1}.
Now, multiplying our numerator and denominator by $\Pi_{k=1}^{a_i} b_{u_{k}}$ removes the dependence of $f$ on $\mathbf{e}_{u_{k}}$, as $ f(\mathbf{e}_{x_{k}},\mathbf{e}_{u_{k}},\mathbf{b}_k)=b_{x_{k}}/b_{u_{k}}$  except for when $x_{a_{i}-1}=x_{a_i}$ and $k = a_i$,  in  which case $ f(\mathbf{e}_{x_{a_i}},\mathbf{e}_{u_{a_i}},\mathbf{b}_{a_i})=(b_{x_{a_i}}-1)/b_{u_{a_i}}$, since we have already subtracted an outgoing edge from the node $b_{x_{a_{i}-1}}$, reducing its out-degree by 1.  

By Corollary \ref{cor:count2}, with $ g=f(\mathbf{e}_{x_{a_i}},\mathbf{e}_{u_{a_i}},\mathbf{b}_{a_i})$  and a relabeling of  indices such that the case where $x_{a_{i}-1}=x_{a_i}$ becomes the case where $x_1=x_2$, we have 
$$\delta_{i}=\frac{(a_i-1)\sum_{x_1=x_2,x_3\neq...\neq x_{a_i}} \Pi_{k=1}^{a_i} f(\mathbf{e}_{x_{k}},\mathbf{b}_k) }{\sum_{x_1,x_2,x_3\neq...\neq x_{a_i}} \Pi_{k=1}^{a_i} f(\mathbf{e}_{x_{k}},\mathbf{b}_k)}(1+O(S^{-4\tau})).$$
Factoring out $\sum_{x_3\neq...\neq x_{a_i}}\Pi_{k=1}^{a_i} f(\mathbf{e}_{x_{k}},\mathbf{b})$ yields the following simplification:

$$\delta_i=\frac{(a_i-1)\sum_{x_1=x_2} \Pi f(\mathbf{e}_{x_{k}},\mathbf{b}_k) }{\sum_{x_1,x_2} \Pi f(\mathbf{e}_{x_{k}},\mathbf{b}_k)}(1+O(S^{-4\tau}))=\frac{(a_i-1)(\beta_2-\beta_1)}{\beta_1^2}(1+O(S^{-4\tau})).$$ 

where the last equality above follows from the fact that in the numerator where $x_1=x_2$, $f(\mathbf{e}_{x_{1}},\mathbf{b}_1)*f(\mathbf{e}_{x_{2}},\mathbf{b}_2)=f(\mathbf{e}_{x_{1}},\mathbf{b}_1)*f(\mathbf{e}_{x_{1}},\mathbf{b}_2)=b_{x_1}(b_{x_1}-1)=b_{x_1}^{2}-b_{x_1}$.  Summing over all choices for $x_1$ yields the expression $\beta_2-\beta_1$.
\newline 
We conclude that  $\log(1+\delta_i)=\log(1+ \frac{(a_i-1)[\beta_2-\beta_1]}{\beta_1^2})+O(S^{-4\tau})$ and analogously $\log(1+\delta_j)=\log(1+ \frac{(a_j-1)[\beta_2-\beta_1]}{\beta_1^2})+O(S^{-4\tau}).$
Hence from Theorem \ref{thm:arborder1} we know that  $\frac{\|G_{d_-i}\|}{\|G_{d_-j}\|}=\frac{a_i}{a_j}\exp[\log(1+ \frac{(a_i-1)[\beta_2-\beta_1]}{\beta_1^2})-\log(1+ \frac{(a_j-1)[\beta_2-\beta_1]}{\beta_1^2})+O(S^{-4\tau})]$ where writing the logarithms as a Taylor series yields $\frac{\|G_{d_-i}\|}{\|G_{d_-j}\|}=\frac{a_i}{a_j}\exp[\frac{(a_i-a_j)(\beta_2-\beta_1)}{\beta_1^2}+O(S^{-4\tau})].$

Finally, note that $$(a_i-a_j)\frac{\beta_2-\beta_1}{\beta_1^{2}}\leq \frac{d_{max}\beta_2}{\beta_1^{2}}\leq\frac{d_{max}^{2}\beta_1}{\beta_1^{2}} = \frac{d_{max}^{2}}{\beta_1}=O(S^{-2\tau})$$

\end{proof}

We can also prove the following statement with relative ease.

\begin{thm} 
Using the notation from Definition \ref{def:alphabeta} (with $\alpha_1=S$), 
if  $\max_{i}\max(a_{i},b_{i})=O(S^{\frac{1}{4}-\epsilon})$ for $\epsilon>0$, then 
 $$\|G_{\mathbf{d}}\|=[1+O(S^{-4\epsilon})]\frac{S!}{\Pi_{i=1}^{N}a_{i}!b_{i}!}\exp(\frac{-(\alpha_{2}-\alpha_{1})(\beta_{2}-\beta_{1})}{2S^{2}}).$$
\end{thm}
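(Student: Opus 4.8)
The plan is to invoke Theorem \ref{thm:graphcount}, which writes $\|G_{\mathbf{d}}\|$ as $\|G_{(\mathbf{a},\mathbf{1}_S)}\| = S!/\prod_i a_i!$ (Corollary \ref{cor:count}) times a telescoping product $\prod_{i=1}^S f(i)$ of single-edge ratios. This reduces the whole problem to two tasks: (i) producing, for each step, a closed-form $f(i)$ that approximates the corresponding ratio to relative order $O(S^{-1-4\epsilon})$, and (ii) evaluating $\prod_i f(i)$ in closed form. For (i) I would use the out-degree ($\sigma=\mathbf{b}$) analog of Theorem \ref{thm:S4}, obtained from the in--out transpose symmetry, in which the correction factor involves the in-degree power sums, namely $\tilde\epsilon=(\alpha_2-\alpha_1)/\alpha_1^2$ with $\alpha_1=S$. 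Note that $\max(a_i,b_i)=O(S^{1/4-\epsilon})$ is the statement $d_{max}=O(S^{1/2-\tau})$ with $\tau=\tfrac14+\epsilon$, so the error $O(S^{-4\tau})=O(S^{-1-4\epsilon})$ of that theorem is exactly the per-step accuracy Theorem \ref{thm:graphcount} requires (its internal exponent being $4\epsilon$), and the telescoped error collapses to the advertised $O(S^{-4\epsilon})$.

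Concretely, the $i$-th factor in Theorem \ref{thm:graphcount} moves one unit of out-degree from a reservoir node $p$ to a target node $q=\rho(i)$; writing $\mathbf{c}=\mathbf{b}^{(i-1)}+\mathbf{e}_q$, this ratio is exactly $\|G_{(\mathbf{a},\mathbf{c}-\mathbf{e}_p)}\|/\|G_{(\mathbf{a},\mathbf{c}-\mathbf{e}_q)}\|$, a ratio of the type handled by the $\sigma=\mathbf{b}$ version of Theorem \ref{thm:S4}, which gives $f(i)=\frac{c_p}{c_q}\exp\!\big((c_p-c_q)\tilde\epsilon\big)$. One checks $(c_p-c_q)\tilde\epsilon=O(S^{-2\tau})$ as required, and that every intermediate sequence still obeys $d_{max}=O(S^{1/4-\epsilon})$: target out-degrees never exceed $b_n\le d_{max}$, while the in-degrees are frozen at $\mathbf{a}$, so $\tilde\epsilon$ is the \emph{same} constant at every one of the $S$ steps and the approximation holds uniformly along the path.

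To evaluate $\prod_i f(i)$ cleanly I would first pad $\mathbf{d}$ with $S$ zero-degree nodes (which alters none of $\|G_{\mathbf{d}}\|$, $S$, $\alpha_k$, $\beta_k$) and, using the permutation invariance of Corollary \ref{cor:count}, place the $S$ out-degree-$1$ reservoir nodes on these fresh nodes, so that draining and filling occur on disjoint node sets. Then at each step the drained reservoir node has $c_p=1$, while filling a target node $n$ for the $t$-th time gives $c_q=t$; hence the rational part telescopes to $\prod_n 1/b_n! = 1/\prod_i b_i!$, and the exponents sum to $\tilde\epsilon\sum_i(1-t)=\tilde\epsilon\big(S-\tfrac{\beta_2+\beta_1}{2}\big)=-\tilde\epsilon\,\tfrac{\beta_2-\beta_1}{2}$, using $\beta_1=S$. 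Substituting $\tilde\epsilon=(\alpha_2-\alpha_1)/S^2$ turns the exponent into $-\frac{(\alpha_2-\alpha_1)(\beta_2-\beta_1)}{2S^2}$, and multiplying by $S!/\prod_i a_i!$ from Corollary \ref{cor:count} produces the claimed formula, with the $(1+O(S^{-4\epsilon}))$ factor supplied by Theorem \ref{thm:graphcount}.

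I expect the main obstacle to be the bookkeeping that makes the product collapse: one must choose the path (via the padding and relabeling above) so the drained and filled nodes never coincide, and then verify \emph{uniformly} that each single-step ratio is approximated to relative order $O(S^{-1-4\epsilon})$ by the closed form from the $\sigma=\mathbf{b}$ version of Theorem \ref{thm:S4} — in particular that the frozen in-degree sequence keeps $\tilde\epsilon$ constant across all $S$ steps and that every intermediate degree sequence stays within the $O(S^{1/4-\epsilon})$ bound. The remaining algebra, namely the $1/\prod_i b_i!$ telescoping and the identity $\sum_i(1-t)=-(\beta_2-\beta_1)/2$, is routine.
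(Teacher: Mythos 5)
Your proposal is correct and follows essentially the same route as the paper: reduce to the base count of Corollary \ref{cor:count} via the telescoping product of Theorem \ref{thm:graphcount}, approximate each single-step ratio by the out-degree ($\sigma=\mathbf{b}$) transpose of Theorem \ref{thm:S4} with $\tau=\tfrac14+\epsilon$ so that the per-step error is $O(S^{-1-4\epsilon})$, and collapse the rational parts to $1/\prod_i b_i!$ and the exponents to $-(\alpha_2-\alpha_1)(\beta_2-\beta_1)/(2S^2)$. The only difference is cosmetic bookkeeping (you pad with zero-degree reservoir nodes so drained and filled nodes are disjoint, whereas the paper increments each target in place from out-degree $1$ to $b_n$), and both conventions yield the same telescoped product.
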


\begin{proof}
We know from Theorem \ref{thm:S4} that $$\frac{\|G_{d_{-i}}\|}{\|G_{d_{-j}}\|}= \frac{a_{i}}{a_{j}}e^{(a_{i}-a_{j})\epsilon+O(S^{-4\tau})}$$ where $\epsilon=(\beta_{2}-\beta_{1})/\beta_{1}^2$,$(a_{i}-a_{j})\epsilon=O(S^{-2\tau})$ when $d_{max} = O(S^{\frac{1}{2}-\tau})$ for $\tau \in (0,1/2)$.
If, in fact, $d_{max}=\max_{i}\max(a_{i},b_{i})=O(S^{\frac{1}{4}-\epsilon})$, then we have that $\tau = \frac{1}{4}+\epsilon$ and consequently $O(S^{-4\tau})=O(S^{-1-4\epsilon})$.  
By Theorem \ref{thm:graphcount} and the definition of $\rho$ given in the theorem statement, we know that 
$$\|G_{\mathbf{d}}\|=\frac{S!}{\Pi_{i=1}^{N}a_i!}\Pi_{i=1}^{S}f(i)(1+O(S^{-4\epsilon}))$$
where by equation (\ref{eq:rat_approx}) we can approximate $$f(i)=\frac{\|G_{(\mathbf{a},\sum_{k=1}^{S-i}e_{k}+\sum_{k=1}^{i}e_{\rho(k)})}\|}{\|G_{(\mathbf{a}, \sum_{k=1}^{S-i+1}e_{k}+\sum_{k=1}^{i-1}e_{\rho(k)})}\|}(1+O(S^{-1-4\epsilon})).$$

Now, we will proceed with a sequence of unit out-degree switches that transform an out-degree sequence consisting entirely of 1's to the out-degree sequence of $\mathbf{d}$, namely $\mathbf{b}$.  Without loss of generality, suppose that the first node in $\mathbf{b}$ has out-degree $b_1$, such that there are $b_1-1$ switches required, and that $\rho(1)= \cdots = \rho(b_{1}-1)=1$, which means that all of the switches will be applied to the first node, taking it from out-degree 1 to out-degree $b_1$.  Then it follows from Theorem \ref{thm:S4}  that 
$f(1) = \frac{1}{2}\exp((\alpha_2-\alpha_1)(1-2)/S^{2})$, since the first switch changes the out-degree of the first node from 1 to 2.  Similarly, it follows that 
$f(2) = \frac{1}{3}\exp((\alpha_2-\alpha_1)(1-3)/S^{2})$ and, more generally for $k\leq b_{1}-1$, $f(k) = \frac{1}{k+1}\exp((\alpha_2-\alpha_1)(1-[k+1])/S^{2})=\frac{1}{k+1}\exp((\alpha_2-\alpha_1)(-k)/S^{2})$.
Hence $$\Pi_{k=1}^{b_{i}-1}f(k) = \Pi_{k=1}^{b_{i}-1}\frac{1}{k+1}\exp((\alpha_2-\alpha_1)(-k)/S^{2})=$$
$$\frac{\exp((\alpha_2-\alpha_1)\sum_{k=1}^{b_{1}-1}(-k)/S^{2})}{b_{1}!}=\frac{\exp(-\frac{(\alpha_2-\alpha_1)(b_{1}^{2}-b_{1})}{2S^{2}})}{b_{1}!}.$$

Repeating this argument for all of the nodes in the degree sequence yields the result,

$$\|G_{\mathbf{d}}\|=(1+O(S^{-4\epsilon}))\frac{S!}{\Pi_{i=1}^{N}a_i!}\Pi_{i=1}^{S}f(i)=(1+O(S^{-4\epsilon}))\frac{S!}{\Pi_{i=1}^{N}a_{i}!b_{i}!}\exp(\frac{-(\alpha_{2}-\alpha_{1})(\beta_{2}-\beta_{1})}{2S^{2}}).$$

\end{proof}

We now explain the intuition behind some generalizations of Lemma \ref{lem:count} that we need to achieve higher order approximations.
Note that in Theorem \ref{thm:S4}, using Lemma \ref{lem:count}, we performed an approximation of the form  
$$ \frac{\sum_{x_{1}=x_{2}\neq...\neq x_{r}}\Pi_{n=1}^{r}f(x_{n})}{\sum_{x_{1}\neq x_{2}\neq...\neq x_{r}}\Pi_{n=1}^{r}f(x_{n})}\approx \frac{\sum_{x_{1}=x_{2}}\Pi_{n=1}^{2}f(x_{n})}{\sum_{x_{1},x_{2}}\Pi_{n=1}^{2}f(x_{n})} \left[ \frac{\sum_{x_{3}\neq...\neq x_{r}}\Pi_{n=3}^{r}f(x_{n})}{\sum_{x_{3}\neq...\neq x_{r}}\Pi_{n=3}^{r}f(x_{n})} \right]$$ $$=\frac{\sum_{x_{1}=x_{2}}\Pi_{n=1}^{2}f(x_{n})}{\sum_{x_{1},x_{2}}\Pi_{n=1}^{2}f(x_{n})}$$
and we showed that such an approximation yielded an $O(S^{-4\tau})$ error. 

More generally, we want to construct approximations of $$\sum_{x_{1}=x_{2}\neq...\neq x_{r}}^{N}\Pi_{n=1}^{r}f(x_{n}) \hspace{3pt} \mbox{and} \hspace{3pt} \sum_{x_{1}\neq x_{2}\neq...\neq x_{r}}^{N}\Pi_{n=1}^{r}f(x_{n})$$
For example, to attain a generalization of Lemma 2, we allow for the possibility (in the numerator) that $x_{1}=x_{2}=x_{3}$, but now we need to separate out three terms as opposed to two to achieve our desired cancellation; that is, we have 

$$ \frac{\sum_{x_{1}=x_{2}=x_{3}\neq...\neq x_{r}}\Pi_{n=1}^{r}f(x_{n})}{\sum_{x_{1}\neq x_{2}\neq...\neq x_{r}}\Pi_{n=1}^{r}f(x_{n})}\approx \frac{\sum_{x_{1}=x_{2}=x_{3}}\Pi_{n=1}^{3}f(x_{n})}{\sum_{x_{1},x_{2},x_{3}}\Pi_{n=1}^{3}f(x_{n})} \left[ \frac{\sum_{x_{4}\neq...\neq x_{r}}\Pi_{n=3}^{r}f(x_{n})}{\sum_{x_{4}\neq...\neq x_{r}}\Pi_{n=3}^{r}f(x_{n})} \right].$$
However, once we are computing $O(S^{-4\tau})$ terms, we also need to consider the case where $x_{1}=x_{2}$ and $x_{3}=x_{4}$, as such terms also turn out to contribute at $O(S^{-4\tau})$. To make this idea more rigorous, consider approximating 

$$
\begin{array}{l} 
\sum_{x_{1}=x_{2}\neq x_{3}\neq...\neq x_{r}}\Pi_{n=1}^{r}f(x_{n})= \\
\sum_{x_{1}=x_{2}, x_{3}\neq...\neq x_{r}}\Pi_{n=1}^{r}f(x_{n})-(r-2)\sum_{x_{1}=x_{2}= x_{3}\neq...\neq x_{r}}\Pi_{n=1}^{r}f(x_{n}).
\end{array}$$

To obtain a sufficiently high order estimate for the left hand side, we partition the right hand side into still more terms, motivated by the intuition that terms with more equal signs under a summation should be of higher order; thus, we consider 

$$ \sum_{x_{1}=x_{2}\neq x_{3}\neq...\neq x_{r}}\Pi_{n=1}^{r}f(x_{n})=\sum_{\substack{x_{1}=x_{2}, x_{3},\\x_{4}\neq...\neq x_{r}}}\Pi_{n=1}^{r}f(x_{n})-(r-3) \sum_{\substack{x_{1}=x_{2},\\ x_{3}=x_{4}\neq...\neq x_{r}}}\Pi_{n=1}^{r}f(x_{n})-$$ $$(r-2)\sum_{\substack{x_{1}=x_{2}= x_{3},\\x_{4}\neq...\neq x_{r}}}\Pi_{n=1}^{r}f(x_{n})+ \ldots $$  
where we have neglected to write out the final terms of still higher order.  

Now, since we want to keep the case where $x_{1}=x_{2}$ and $x_{3}=x_{4}$, we have to integrate out the $x_{4}$, which yields  

$$ \sum_{x_{1}=x_{2}\neq x_{3}\neq...\neq x_{r}}\Pi_{n=1}^{r}f(x_{n})=\sum_{\substack{x_{1}=x_{2},\\ x_{3},x_{4},x_{5}\neq...\neq x_{r}}}\Pi_{n=1}^{r}f(x_{n})-(r-3) \sum_{\substack{x_{1}=x_{2}, x_{3}=x_{4},\\x_{5}\neq...\neq x_{r}}}\Pi_{n=1}^{r}f(x_{n})$$ $$-(r-2)\sum_{\substack{x_{1}=x_{2}= x_{3},x_{4},\\x_{5}\neq...\neq x_{r}}}\Pi_{n=1}^{r}f(x_{n})-(r-4)\sum_{\substack{x_{1}=x_{2}, x_{3},\\x_{4}=x_{5}\neq...\neq x_{r}}}\Pi_{n=1}^{r}f(x_{n})+ \ldots $$ 

But we can simply relabel indices to observe that 
$$\sum_{x_{1}=x_{2}, x_{3},x_{4}=x_{5}\neq...\neq x_{r}}\Pi_{n=1}^{r}f(x_{n})=
\sum_{\substack{x_{1}=x_{2},x_{3}=x_{4},\\x_{5}\neq...\neq x_{r}}}\Pi_{n=1}^{r}f(x_{n})+\ldots,$$ 
and we conclude that 
$$
\begin{array}{l}
 \sum_{x_{1}=x_{2}\neq x_{3}\neq...\neq x_{r}}\Pi_{n=1}^{r}f(x_{n})=\sum_{x_{1}=x_{2}, x_{3},x_{4},x_{5}\neq...\neq x_{r}}\Pi_{n=1}^{r}f(x_{n}) \\ \\
 -(2r-7) \sum_{\substack{x_{1}=x_{2}, x_{3}=x_{4},\\x_{5}\neq...\neq x_{r}}}\Pi_{n=1}^{r}f(x_{n})$$ $$-(r-2)\sum_{\substack{x_{1}=x_{2}= x_{3},x_{4},\\x_{5}\neq...\neq x_{r}}}\Pi_{n=1}^{r}f(x_{n})+\ldots
 \end{array}
 $$

So in fact, it suffices to factor out four indices.  Indeed, the general intuition is that if we are including $k$ equalities in our expansion, then we will need to factor out $2k$ indices, to consider the worst case scenario where $x_{1}=x_{2},x_{3}=x_{4},...,x_{2k-1}=x_{2k}$.  But we can then relabel indices to express terms with, say,  $x_{1}=x_{2},x_{3},x_{4}=x_{5},...,x_{2k}=x_{2k+1}$  using terms of the form 
$x_{1}=x_{2},x_{3}=x_{4},...,x_{2k-1}=x_{2k}$.

With these ideas in mind, we now proceed to derive the higher order approximation. 
We subdivide this task into a few steps, starting with an approximation lemma on sums of the form 
$\sum_{x_{1}\neq...\neq x_{r}}\Pi_{n=1}^{r}f(x_{n})$ 
and sums of the form  $\sum_{x_{1}=x_{2}\neq...\neq x_{r}}\Pi_{n=1}^{r}f(x_{n})$. 

\begin{lem}
\label{lem:reduce}
Let $f,g:\mathbb{N}\rightarrow[0,\infty)$ with $f(\cdot) \geq g(\cdot)$, $\sum_{i=1}^{N}f(i)=S$, $\max f(\cdot)=O(S^{\frac{1}{2}-\tau})$, and $r=O(S^{\frac{1}{2}-\tau})$, $r \geq 4$
and define $F=\Pi_{i=1}^{4}f(x_{i})$, $G=g(x_{1})\Pi_{i=2}^{4}f(x_{i})$,  
$$\tilde{\xi}=\sum_{\substack {x_{1},...,x_{4},\\ x_{5}\neq...\neq x_{r}}}^{N}\Pi_{i=1}^{r}f(x_{i})-(4r-10)\sum_{\substack {x_{1}=x_{2},x_{3},x_{4},\\ x_{5}\neq...\neq x_{r}}}^{N}\Pi_{i=1}^{r}f(x_{i}) \; \; \mbox{and} \; \; $$ 

$$
\begin{array}{rcl}
\tilde{\zeta}=\sum_{\substack {x_{1}=x_{2},x_{3},x_{4},\\ x_{5}\neq...\neq x_{r}}}^{N}g(x_{1})\Pi_{i=2}^{r}f(x_{i}) & - & (r-2)\sum_{\substack {x_{1}=x_{2}=x_{3},x_{4},\\ x_{5}\neq...\neq x_{r}}}^{N}g(x_{1})\Pi_{i=2}^{r}f(x_{i}) \vspace{0.1in} \\
& - & (2r-7)\sum_{\substack {x_{1}=x_{2},x_{3}=x_{4},\\ x_{5}\neq...\neq x_{r}}}^{N}g(x_{1})\Pi_{i=2}^{r}f(x_{i}).
\end{array}
     $$

Then 
\begin{equation}
\label{eq:tildexi}
\xi_0 := \sum_{x_{1}\neq...\neq x_{r}}^{N}\Pi_{i=1}^{r}f(x_{i})= \tilde{\xi}(1+O(S^{-4\tau}))
 \end{equation}
and 
\begin{equation}
\label{eq:tildezeta}
\zeta_0 := \sum_{x_{1}=x_{2}\neq ...\neq x_{r}}g(x_{1})\Pi_{i=2}^{r}f(x_{i})=\tilde{\zeta}(1+O(S^{-4\tau})).
\end{equation}
Furthermore, 
 \begin{equation}
 \label{eq:ratio0}
 \frac{\zeta_0}{\xi_0} = \frac{\sum_{x_{1}=x_{2},x_{3},x_{4}}G-(r-2)\sum_{x_{1}=x_{2}=x_{3},x_{4}}G-(2r-7)\sum_{x_{1}=x_{2},x_{3}=x_{4}}G}{\sum_{x_{1},..,x_{4}}F-(4r-10)\sum_{x_{1}=x_{2},x_{3},x_{4}}F}+{\color{black}O(S^{-\frac{1}{2}-5\tau})}.
 \end{equation}

\end{lem}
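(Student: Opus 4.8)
The plan is to prove the two reduction estimates \eqref{eq:tildexi} and \eqref{eq:tildezeta} independently, by inclusion--exclusion over the coincidence patterns of the ``freed'' head indices $x_1,\dots,x_4$, and then to divide them and factor out a common tail to obtain \eqref{eq:ratio0}. For \eqref{eq:tildexi} I would start from the fully distinct sum $\xi_0$ and release $x_1,\dots,x_4$ from the distinctness chain. By the mechanism of Lemma~\ref{lem:count}, the free sum $\sum_{x_1,\dots,x_4,\,x_5\neq\cdots\neq x_r}\Pi f$ equals $\xi_0$ plus correction terms indexed by the ways the freed indices collide, either with one another or with the tail. Organizing these by the number of independent coincidences, a single coincidence contributes at relative order $O(S^{-2\tau})$ --- precisely the estimate quantified by \eqref{eq:upbound}--\eqref{eq:lowbound} --- whereas any two independent coincidences compound to $O(S^{-4\tau})$ and may be discarded. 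Hence only the single-coincidence terms survive, and each of them reduces, after relabeling and up to $O(S^{-4\tau})$, to the single representative $\sum_{x_1=x_2,\,x_3,x_4,\,x_5\neq\cdots\neq x_r}\Pi f$.

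The coefficient is then just the number of single coincidences among four freed indices: $\binom{4}{2}=6$ ways for two freed indices to coincide, plus $4(r-4)$ ways for a freed index to coincide with one of the $r-4$ tail indices, giving $4r-10$, exactly the coefficient in $\tilde\xi$. For \eqref{eq:tildezeta} the argument is identical, except that the equality $x_1=x_2$ is now built in and the weight $g$ rides passively on $x_1$; I would therefore invoke Corollary~\ref{cor:count2} in place of Lemma~\ref{lem:count}, noting that the coincidence count is insensitive to whether the head factor is $f$ or $g$. Now a single \emph{additional} coincidence produces either the triple $x_1=x_2=x_3$ or the double pair $x_1=x_2,\,x_3=x_4$, and the reduction carried out in the discussion preceding the lemma already supplies the coefficients $r-2$ and $2r-7$, the latter absorbing (after the relabeling indicated there) the freed-index/tail coincidences into the double-pair representative; every pattern with two further coincidences is again $O(S^{-4\tau})$ and is dropped.

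With \eqref{eq:tildexi} and \eqref{eq:tildezeta} established I would write $\zeta_0/\xi_0=(\tilde\zeta/\tilde\xi)(1+O(S^{-4\tau}))$. In each surviving term the freed head indices are genuinely unconstrained from the tail, so every term factors \emph{exactly} as a head sum over $x_1,\dots,x_4$ times the common tail $T=\sum_{x_5\neq\cdots\neq x_r}\Pi_{i=5}^{r} f(x_i)$; because $g$ sits only on the head index $x_1$, the identical factor $T$ appears in both $\tilde\xi$ and $\tilde\zeta$ and cancels in the quotient, leaving precisely the head ratio on the right-hand side of \eqref{eq:ratio0}. To convert the relative error into the stated absolute error I would use the crude bound $\zeta_0/\xi_0=O(S^{-1/2-\tau})$, which follows since the forced equality $x_1=x_2$ contributes $\sum_x g(x)f(x)/S^2\le \sum_x f(x)^2/S^2=O(S^{-1/2-\tau})$; multiplying by the $O(S^{-4\tau})$ relative error then yields the claimed $O(S^{-1/2-5\tau})$.

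The main obstacle is the combinatorial bookkeeping: verifying that every discarded coincidence pattern is genuinely of relative order $O(S^{-4\tau})$ while the retained patterns carry exactly the stated coefficients. The delicate point, already flagged in the discussion before the lemma, is that the forced equality $x_1=x_2$ in $\zeta_0$ makes the double-pair pattern $x_1=x_2,\,x_3=x_4$ only a \emph{single} additional coincidence, so it survives at order $S^{-2\tau}$; correctly merging the freed-index/tail coincidences into this representative to obtain $2r-7$ (rather than undercounting the coefficient as $1$) is where care is required. A secondary point, easy to overlook, is confirming that the freed head indices are indeed unconstrained from the tail, which is what makes the factorization of $T$ exact rather than merely asymptotic and thereby keeps no extraneous error in \eqref{eq:ratio0}.
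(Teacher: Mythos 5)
Your argument is correct and rests on the same two pillars as the paper's proof: the single-coincidence estimates quantified by (\ref{eq:upbound})--(\ref{eq:lowbound}), and the relabeling that merges head--tail coincidences into a canonical representative while double coincidences are discarded at relative order $O(S^{-4\tau})$. The one organizational difference is how the coefficients are produced. The paper obtains $4r-10$ by a sequential telescoping, releasing one variable at a time and accumulating $(r-1)+(r-2)+(r-3)+(r-4)$, with the last step requiring the extra relabeling of the $x_4=x_5$ term; you instead count coincidence patterns all at once, getting $\binom{4}{2}+4(r-4)=4r-10$, and similarly $2+(r-4)=r-2$ and $1+2(r-4)=2r-7$ for the $\tilde\zeta$ coefficients (which the paper leaves as ``analogous''). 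Your direct count is arguably cleaner for verifying the coefficients and makes the origin of each term transparent, at the cost of having to justify once and for all that the first-order inclusion--exclusion truncation is legitimate, i.e.\ that the $O(r^2)$ two-coincidence patterns collectively contribute only $O(S^{-4\tau})$ relative error; the paper's telescoping builds that control in step by step. Your treatment of (\ref{eq:ratio0}) --- exact factorization of the common tail over $x_5\neq\cdots\neq x_r$ followed by the crude bound $\zeta_0/\xi_0=O(S^{-\frac{1}{2}-\tau})$ to convert the relative error into the stated absolute one --- supplies detail the paper compresses into ``follows from cancellation of terms,'' and is correct.
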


\begin{proof}
To derive the first equality, start by expressing $\xi_0$ as
\[
\xi_0 = \sum_{x_{1}\neq...\neq x_{r}}^{N}\Pi_{i=1}^{r}f(x_{i})=\sum_{\substack {x_{1},\\ x_{2}\neq...\neq x_{r}}}^{N}\Pi_{i=1}^{r}f(x_{i})-(r-1)\sum_{\substack {x_{1}=x_{2}\neq\\ x_{3}\neq...\neq x_{r}}}^{N}\Pi_{i=1}^{r}f(x_{i}).
\]
Applying Lemma 2  to the final term yields
$$\xi_0=\sum_{\substack {x_{1},\\ x_{2}\neq...\neq x_{r}}}^{N}\Pi_{i=1}^{r}f(x_{i})-(r-1)\sum_{\substack {x_{1}=x_{2},x_{3},x_{4},\\ x_{5}\neq...\neq x_{r}}}^{N}\Pi_{i=1}^{r}f(x_{i})+O(S^{-4\tau})\sum_{x_{1}\neq...\neq x_{r}}^{N}\Pi_{i=1}^{r}f(x_{i})$$
where the error term comes from  
\begin{equation}
\label{eq:error}
O(S^{-2\tau})(r-1)\sum_{\substack {x_{1}=x_{2},x_{3},x_{4}\\ x_{5}\neq...\neq x_{r}}}^{N}\Pi_{i=1}^{r}f(x_{i})=O(S^{-4\tau})\sum_{x_{1}\neq...\neq x_{r}}^{N}\Pi_{i=1}^{r}f(x_{i}).
\end{equation}

We next repeat the same argument and relabel to obtain

\begin{equation}
\begin{array}{lll}
\xi_0 = \sum_{\substack {x_{1},x_{2},\\x_{3}\neq...\neq x_{r}}}^{N}\Pi_{i=1}^{r}f(x_{i})-(r-1)\sum_{\substack {x_{1}=x_{2},x_{3},x_{4},\\ x_{5}\neq...\neq x_{r}}}^{N}\Pi_{i=1}^{r}f(x_{i})-\\ \\(r-2)\sum_{\substack {x_{1},x_{2}=x_{3},x_{4},\\ x_{5}\neq...\neq x_{r}}}^{N}\Pi_{i=1}^{r}f(x_{i})+O(S^{-4\tau})\sum_{x_{1}\neq...\neq x_{r}}^{N}\Pi_{i=1}^{r}f(x_{i}) 
\\ \\ =  \sum_{\substack {x_{1},x_{2},\\x_{3}\neq...\neq x_{r}}}^{N}\Pi_{i=1}^{r}f(x_{i})-(2r-3)\sum_{\substack {x_{1}=x_{2},x_{3},x_{4},\\ x_{5}\neq...\neq x_{r}}}^{N}\Pi_{i=1}^{r}f(x_{i})+\\ \\O(S^{-4\tau})\sum_{x_{1}\neq...\neq x_{r}}^{N}\Pi_{i=1}^{r}f(x_{i}).
\end{array}
\end{equation}
Continuing in this fashion, 

$$\xi_0=\sum_{\substack {x_{1},x_{2},x_{3},\\x_{4}\neq...\neq x_{r}}}^{N}\Pi_{i=1}^{r}f(x_{i})-(3r-6)\sum_{\substack {x_{1}=x_{2},x_{3},x_{4},\\ x_{5}\neq...\neq x_{r}}}^{N}\Pi_{i=1}^{r}f(x_{i})+O(S^{-4\tau})\sum_{x_{1}\neq...\neq x_{r}}^{N}\Pi_{i=1}^{r}f(x_{i}).$$

The next, final step is a bit trickier so we include a little more detail.  First, we extract the $x_{4}$ out of the first summation of the above equation and apply (\ref{eq:error}), which yields 

$$
\begin{array}{rcl}
\\
\xi_0=\sum_{\substack {x_{1},x_{2},x_{3}x_{4},\\ x_{5}\neq...\neq x_{r}}}^{N}\Pi_{i=1}^{r}f(x_{i})-(3r-6)\sum_{\substack {x_{1}=x_{2},x_{3},x_{4},\\ x_{5}\neq...\neq x_{r}}}^{N}\Pi_{i=1}^{r}f(x_{i}) \\ \\
-(r-4)\sum_{\substack {x_{1},x_{2},x_{3},x_{4}=x_{5},\\ x_{6}\neq...\neq x_{r}}}^{N}\Pi_{i=1}^{r}f(x_{i})+O(S^{-4\tau})\sum_{x_{1}\neq...\neq x_{r}}^{N}\Pi_{i=1}^{r}f(x_{i}).
\end{array}
$$

But by another application of (\ref{eq:error}),

$$\sum_{\substack {x_{1},x_{2},x_{3},x_{4}=x_{5},\\ x_{6}\neq...\neq x_{r}}}^{N}\Pi_{i=1}^{r}f(x_{i})=\sum_{\substack {x_{2},x_{3},x_{4}=x_{5},\\ x_{1}\neq x_{6}\neq...\neq x_{r}}}^{N}\Pi_{i=1}^{r}f(x_{i})+O(S^{-4\tau})\sum_{x_{1}\neq...\neq x_{r}}^{N}\Pi_{i=1}^{r}f(x_{i}),$$ where by relabeling, the first term on the right hand side is the same as $$\sum_{\substack {x_{1}=x_{2},x_{3},x_{4},\\ x_{5}\neq...\neq x_{r}}}^{N}\Pi_{i=1}^{r}f(x_{i}).$$

So we conclude that

$$\xi_0=\sum_{\substack {x_{1},x_{2},x_{3},x_{4},\\ x_{5}\neq...\neq x_{r}}}^{N}\Pi_{i=1}^{r}f(x_{i})-(4r-10)\sum_{\substack {x_{1}=x_{2},x_{3},x_{4},\\ x_{5}\neq...\neq x_{r}}}^{N}\Pi_{i=1}^{r}f(x_{i})+O(S^{-4\tau})\sum_{x_{1}\neq...\neq x_{r}}^{N}\Pi_{i=1}^{r}f(x_{i}), $$
where the sum in the final term is itself $\xi_0$, such that the first part of the Lemma is established.

The second part of the Lemma follows analogously, while expression (\ref{eq:ratio0}) follows from cancellation of terms.
\end{proof}

\begin{thm}
Let $d_{max}=O(S^{\frac{1}{2}-\tau})$.  Then $$\frac{\|G_{d_{-i}}\|}{\|G_{d_{-j}}\|}=\frac{a_{i}}{a_{j}}\exp([a_{i}-a_{j}]\epsilon_{1}-[a_{i}^{2}-a_{j}^{2}]\epsilon_{2}+O(S^{\max(-6\tau,-\frac{1}{2}-3\tau)}))$$
where 
$$\epsilon_{1}=\frac{\beta_{2}+2\beta_{3}\alpha_{2}/\alpha_{1}^2}{(\beta_{1}+\beta_{2}\alpha_{2}/\alpha_{1}^2)^{2}} \; \mbox{and} \;  \epsilon_{2}=\frac{({\beta_{2}-\beta_{1}})^{2}}{2{\beta_{1}}^{4}}+\frac{{\beta_{3}\beta_{1}-2{\beta_{2}^{2}}}}{\beta_{1}^{4}}.$$

\end{thm}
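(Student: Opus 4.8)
The plan is to bootstrap from the $O(S^{-4\tau})$ estimate of Theorem \ref{thm:S4} in exactly the way that theorem bootstrapped from the crude $a_i/a_j$ estimate, but now feeding the sharper counting of Lemma \ref{lem:reduce} into the machine. Concretely, I would take as input the out-degree form of the Theorem \ref{thm:S4} approximation, $f(\mathbf{e}_{x_k},\mathbf{e}_{u_k},\mathbf{b})=\frac{b_{x_k}}{b_{u_k}}\exp\!\big((b_{x_k}-b_{u_k})\tilde\epsilon\big)$ with $\tilde\epsilon=(\alpha_2-\alpha_1)/\alpha_1^2$, which carries relative error $O(S^{-4\tau})$, i.e. $w=2$ in the language of Theorem \ref{thm:arborder1}. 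Applying Theorem \ref{thm:arborder1} (whose hypotheses are met because this $f$ decomposes as $h(\mathbf{e}_i,\mathbf{d},\mathbf{b})/h(\mathbf{e}_j,\mathbf{d},\mathbf{b})$ and is stable under the small perturbations of $\mathbf{d}$ appearing in (\ref{eq:assumption}), just as in Theorem \ref{thm:S4}) gives $\frac{\|G_{\mathbf{d}_{-i}}\|}{\|G_{\mathbf{d}_{-j}}\|}=\frac{a_i}{a_j}\exp\!\big(\log(1+\delta_i)-\log(1+\delta_j)\big)\,(1+O(S^{-6\tau}))$, reducing everything to an evaluation of $\delta_i$ and its $j$-analogue.

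For the second step I would evaluate $\delta_i$ to relative order $O(S^{-4\tau})$. After multiplying numerator and denominator by $\Pi_k b_{u_k}$ to strip the $u_k$-dependence (up to the reduced out-degree $b_x-1$ carried by the coincident slot), $\delta_i$ becomes $(a_i-1)$ times a ratio of the type treated in Lemma \ref{lem:reduce}, with $f(x)=b_x e^{b_x\tilde\epsilon}$ and $g(x)=(b_x-1)e^{b_x\tilde\epsilon}$. Using (\ref{eq:ratio0}) collapses this to sums over at most four free indices, which I would write in terms of the moments $P_1=\sum_x f(x)$, $P_2=\sum_x f(x)^2$, $Q_{11}=\sum_x g(x)f(x)$ and $Q_{111}=\sum_x g(x)f(x)^2$; Taylor-expanding the exponentials then converts these into $\beta_1,\beta_2,\beta_3$ and $\tilde\epsilon$. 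The crucial feature is that (\ref{eq:ratio0}) carries explicit $(r-2)$, $(2r-7)$ and $(4r-10)$ factors with $r=a_i$, so $\delta_i$ emerges as a genuine quadratic polynomial in $a_i$; writing $\delta_i=(a_i-1)A+(a_i-1)^2B+O(S^{-6\tau})$ with $A,B$ built from $P_1,P_2,Q_{11},Q_{111}$ is the heart of the computation.

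Finally I would expand $\log(1+\delta_i)=\delta_i-\tfrac12\delta_i^2+O(S^{-6\tau})$ (legitimate since $\delta_i=O(S^{-2\tau})$ makes the cubic tail negligible), subtract the $j$-version, and collect powers of the node degrees. Using $(a_i-1)^2-(a_j-1)^2=(a_i^2-a_j^2)-2(a_i-a_j)$, the coefficient of $(a_i-a_j)$ yields $\epsilon_1$ and that of $(a_i^2-a_j^2)$ yields $-\epsilon_2$; the $-\tfrac12\delta_i^2$ term is precisely what produces the $\frac{(\beta_2-\beta_1)^2}{2\beta_1^4}$ contribution to $\epsilon_2$, while $A$ and $B$ supply the $\frac{\beta_3\beta_1-2\beta_2^2}{\beta_1^4}$ piece and the rational, $\tilde\epsilon$-corrected form of $\epsilon_1$ whose numerator and denominator reassemble into $\sum_x f(x)^2$ and $(\sum_x f(x))^2$.

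The main obstacle is the simultaneous order-tracking. Each of $\epsilon_1$ and $\epsilon_2$ is a sum of several terms of a single common order, so I must keep exactly the right coincidence patterns in Lemma \ref{lem:reduce} (through the double coincidence $x_1=x_2,x_3=x_4$ and the triple coincidence $x_1=x_2=x_3$) and exactly the right number of terms in both the exponential and logarithmic expansions, while verifying that every discarded contribution---the higher coincidences, the crude $O(S^{-2\tau})$ approximation used at the two coincident slots, the shift of the in-degree moments $\alpha$ caused by deleting the $a_i+1$ edges at nodes $i$ and $j$, and the $\delta_i^3$ tail---is genuinely $O(S^{\max(-6\tau,-1/2-3\tau)})$. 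In particular, the delicate point is that the $\tilde\epsilon=(\alpha_2-\alpha_1)/\alpha_1^2$ corrections contribute at order $O(S^{-1/2-3\tau})$ to $\epsilon_1$ and so cannot be dropped; they must be shown to land exactly in the positions needed to build the quoted rational expression for $\epsilon_1$, which is where the bulk of the careful bookkeeping lies.
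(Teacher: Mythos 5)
Your proposal follows essentially the same route as the paper's proof: feed the Theorem \ref{thm:S4} approximation (rewritten in the $h(\mathbf{e}_i,\mathbf{d},\mathbf{b})/h(\mathbf{e}_j,\mathbf{d},\mathbf{b})$ form, with the $O(S^{-1/2-3\tau})$ perturbation-stability check on $\beta_2$) into Theorem \ref{thm:arborder1}, then evaluate $\delta_i$ with $f(x)=b_x e^{b_x\alpha_2/\alpha_1^2}$ via the four-index reduction (\ref{eq:ratio0}) of Lemma \ref{lem:reduce} and expand the logarithms. In fact your write-up is more explicit than the paper's, which compresses the final moment bookkeeping into ``further algebraic simplification yields the desired result.''
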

\begin{proof}
From Theorem \ref{thm:S4} we know that 
\begin{equation}
\label{eq:exp}
\frac{\|G_{\mathbf{d_{-i}}}\|}{\|G_{\mathbf{d_{-j}}}\|}=\frac{a_i}{a_j}\exp((a_i-a_j)\frac{\beta_2-\beta_1}{\beta_1^2}=\frac{a_i\exp(a_i\frac{\beta_2}{\beta_1^2})}{a_j\exp(a_j\frac{\beta_2}{\beta_1^2})}(1+O(S^{-4\tau})).
\end{equation}
Note that the middle expression in (\ref{eq:exp}) satisfies the  assumption from Theorem \ref{thm:arborder1} that our approximation can be expressed as $h(i,\mathbf{d})/h(j,\mathbf{d})$ for some function $h$, and hence we can use Theorem \ref{thm:arborder1} to derive the next step on our expansion.
Similarly if we consider a different degree sequence such that $\|\mathbf{d}_0 -\mathbf{d}_1\|_\infty = 1$, $\|\mathbf{d}_0\|_1=\|\mathbf{d}_1\|_1$ and $\|\mathbf{d}_0-\mathbf{d}_1\|_1=O(S^{\frac{1}{2}-\tau})$, then we introduce a multiplicative error of $(1+O(S^{-6\tau}))$ as $$\frac{a_i}{a_j}\exp((a_i-a_j)\frac{\beta_2-\beta_1}{\beta_1^2}$$ only depends on the second moment of the out-degree sequence $\beta_2$.  We introduce the notation $\beta_2(\mathbf{d})$ to explicitly denote the dependence of $\beta_2$ on the degree sequence.  

Consequently, the difference between $|\beta_2(\mathbf{d}_1)-\beta_2(\mathbf{d}_0)|=O(S^{1-2\tau})$, hence 

$$\frac{\frac{a_i}{a_j}\exp((a_i-a_j)\frac{\beta_2(\mathbf{d_1})}{\beta_1^2})}{\frac{a_i}{a_j}\exp((a_i-a_j)\frac{\beta_2(\mathbf{d}_0)}{\beta_1^2})}=\exp((a_i-a_j)\frac{\beta_2(\mathbf{d_1})-\beta_2(\mathbf{d_0})}{\beta_1^2})=
\exp(O(S^{\frac{1}{2}-\tau}O(S^{1-2\tau})/O(S^2)) = \exp(O(S^{-\frac{1}{2}-3\tau})).$$

We now proceed as we did in Theorem \ref{thm:S4}.  We need to evaluate $$\delta_{i} =\frac{(a_{i}-1)\sum_{x_{1}\neq...\neq x_{a_{i-1}}=x_{a_i}} \Pi_{k=1}^{a_i} f(\mathbf{e}_{x_{k}},\mathbf{e}_{u_{k}},\mathbf{b}) }{\sum_{x_{1}\neq...\neq x_{a_{i-1}}\neq x_{a_i}} \Pi_{k=1}^{a_i} f(\mathbf{e}_{x_{k}},\mathbf{e}_{u_{k}},\mathbf{b})}.$$ 
We multiply our numerator and denominator by $\Pi_{k=1}^{a_i} b_{u_k}\exp(b_{u_k}\frac{\alpha_2}{\alpha_1^2})$, where $\alpha_1,\alpha_2$ are the moments of the degree sequence $\mathbf{d}$, we drop the dependence of $f$ on $\mathbf{e}_{u_{k}}$, and we define $f(\mathbf{e}_{x_{k}},\mathbf{b})=b_{x_{k}}\exp(b_{x_k}\frac{\alpha_2}{\alpha_1^2})$ to obtain (from (\ref{eq:exp}) 

$$\delta_{i}=\frac{(a_{i}-1)\sum_{x_{1}\neq...\neq x_{a_{i-1}}=x_{a_i}} \Pi_{k=1}^{a_i} f(\mathbf{e}_{x_{k}},\mathbf{b}) }{\sum_{x_{1}\neq...\neq x_{a_{i-1}}\neq x_{a_i}} \Pi_{k=1}^{a_i} f(\mathbf{e}_{x_{k}},\mathbf{b})}.$$
We can now invoke  Lemma \ref{lem:reduce} to simplify the above summation.  Further algebraic simplification yields the desired result.

\end{proof}

To indicate how the estimates that we derive can, in theory, be continued indefinitely,  we conclude this section by providing a higher order approximation for a ratio of the numbers of graphs that realize two different degree sequences that has an $O(S^{-8\tau})$ correction.
As before, we need a more refined version of Lemma 3 to derive this approximation.  We omit the proof, since the same proof technique that derived Lemma 3 yields the proof of Lemma 4.  On the  author's webpage, we provide code that computes (and proves) the refined variants of Lemma 3 up to arbitrary order.
\begin{lem} Suppose that $g:\mathbb{N} \to [0,\infty),  f : \mathbb{N} \to [1,\infty)$ with $f(\cdot)\geq  g(\cdot)$,  that $\sum_{i=1}^{N}f(i)=S$,   
$\max f(\cdot)=O(S^{\frac{1}{2}-\tau})$, and that $r=O(S^{\frac{1}{2}-\tau})$, {\color{black}$r\geq 6$}.
Define $\chi=\sum_{x_{1}\neq...\neq x_{r}}^{N}\Pi_{i=1}^{r}f(x_{i})$ and $\kappa=\sum_{x_{1}=x_{2}\neq...\neq x_{r}}^{N} g(x_{1})\Pi_{i=2}^{r}f(x_{i})$
Furthermore, for notational convenience let $F=\Pi_{i=1}^{6}f(x_{i})$ and $G=g(x_{1})\Pi_{i=2}^{6}f(x_{i})$.
Then \[\frac{\kappa}{\chi}=\frac{\kappa_{1}}{\chi_{1}}+O(S^{-\frac{1}{2}-7\tau})\]

where $$ \chi_{1}=\sum_{x_{1},..,x_{6}}F-(6r-21)\sum_{\substack{x_{1}=x_{2}\\x_{3}..,x_{6}}}F+(9r^{2}-58r+69)\sum_{\substack{x_{1}=x_{2},x_{3}=x_{4}\\x_{5},x_{6}}}F+(6r^{2}-48r+112)\sum_{\substack{x_{1}=x_{2}=x_{3}\\x_{4}..,x_{6}}}F$$
and $$\kappa_{1}=\sum_{x_{1}=x_{2},x_{3}..,x_{6}}G-(r-2)\sum_{\substack{x_{1}=x_{2}=x_{3}\\x_{4}..,x_{6}}}G-(4r-18)\sum_{\substack{x_{1}=x_{2},x_{3}=x_{4}\\x_{5},x_{6}}}G+(r^{2}-5r+6)\sum_{\substack{x_{1}=x_{2}=x_{3}=x_{4}\\x_{5},x_{6}}}G$$
$$+(3r^{2}-21r+30)\sum_{\substack{x_{1}=x_{2}=x_{3}\\x_{4}=x_{5},x_{6}}}G+(4r^{2}-40r+104)\sum_{\substack{x_{1}=x_{2}\\x_{3}=x_{4}=x_{5},x_{6}}}G+(2r^{2}-15r+21)\sum_{\substack{x_{1}=x_{2}\\x_{3}=x_{4},x_{5}=x_{6}}}G.     $$
\end{lem}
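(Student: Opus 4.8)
The plan is to mirror exactly the derivation of Lemma \ref{lem:reduce}, changing only that the expansion is now carried out to accuracy $O(S^{-6\tau})$ rather than $O(S^{-4\tau})$; this forces us to retain every configuration containing up to \emph{two} equalities and correspondingly to free up six indices rather than four, which is why the coefficients here are quadratic in $r$ instead of linear. The engine of the whole argument is the reduction estimate (\ref{eq:error}) coming from Lemma \ref{lem:count}: whenever an index $x_a$ carries a $\neq$ constraint against the remaining indices, I would write
$$\sum_{x_a\neq(\mathrm{rest})}\Pi_{i=1}^{r}f(x_i)=\sum_{x_a,(\mathrm{rest})}\Pi_{i=1}^{r}f(x_i)-(\#\,\mathrm{rest})\sum_{x_a=x_b,(\mathrm{rest})}\Pi_{i=1}^{r}f(x_i),$$
and then absorb whichever correction sums are of sufficiently high order into the $O(S^{-6\tau})\chi$ error by a further application of Lemma \ref{lem:count} and Corollary \ref{cor:count2}.

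First I would treat $\chi=\sum_{x_1\neq\dots\neq x_r}^{N}\Pi_{i=1}^{r}f(x_i)$. Freeing $x_1,x_2,x_3,x_4,x_5,x_6$ one at a time, exactly as in the passage leading to $\tilde{\xi}$ in Lemma \ref{lem:reduce} but continuing two further steps, produces the free sum $\sum_{x_1,\dots,x_6}F$ together with correction sums carrying one equality (coefficient linear in $r$) and, because two distinct index-freeings can each spawn a pair, correction sums carrying two equalities (coefficient quadratic in $r$). The two-equality configurations split into the ``two disjoint pairs'' type $x_1=x_2,\,x_3=x_4$ and the ``triple'' type $x_1=x_2=x_3$; every other two-equality pattern that arises, such as $x_1=x_2,\,x_4=x_5$ with a free $x_3$ between them, is reduced to one of these canonical patterns by relabeling indices, just as in the step ``by another application of (\ref{eq:error})\ldots by relabeling'' of Lemma \ref{lem:reduce}. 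Collecting the coefficient of each canonical pattern then yields the stated $\chi_1$, with the quadratic coefficients $9r^2-58r+69$ and $6r^2-48r+112$ emerging from the merged contributions of the several relabeled two-equality configurations.

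I would then repeat the identical procedure on $\kappa=\sum_{x_1=x_2\neq\dots\neq x_r}^{N}g(x_1)\Pi_{i=2}^{r}f(x_i)$, where the weight $g(x_1)$ rides along on the already-paired index and is left untouched by the freeing of the remaining indices; this is handled by Corollary \ref{cor:count2} in place of Lemma \ref{lem:count}. The same bookkeeping, now starting from a configuration that already contains one equality, generates the seven terms of $\kappa_1$, including the genuinely new patterns such as $x_1=x_2=x_3=x_4$ and $x_1=x_2,\,x_3=x_4=x_5$ that only become visible once two additional equalities are permitted. To pass from the separate estimates $\chi=\chi_1(1+O(S^{-6\tau}))$ and $\kappa=\kappa_1(1+O(S^{-6\tau}))$ to the claimed ratio estimate, I would note that $\kappa_1/\chi_1=O(S^{-\frac12-\tau})$, since $\kappa$ carries the extra paired index and $g(\cdot)\le f(\cdot)$ with $\max f=O(S^{\frac12-\tau})$ and $\sum f=S$; hence
$$\frac{\kappa}{\chi}=\frac{\kappa_1}{\chi_1}\bigl(1+O(S^{-6\tau})\bigr)=\frac{\kappa_1}{\chi_1}+O\!\left(S^{-\frac12-\tau}\right)O\!\left(S^{-6\tau}\right)=\frac{\kappa_1}{\chi_1}+O\!\left(S^{-\frac12-7\tau}\right),$$
which is precisely the assertion (and which reproduces the $O(S^{-\frac12-5\tau})$ of Lemma \ref{lem:reduce} one order lower).

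The step I expect to be the genuine obstacle is the combinatorial accounting in the second and third paragraphs: tracking how each index-freeing, followed by the relabelings that collapse noncanonical two-equality patterns onto the canonical ``two pairs'' and ``triple'' patterns, accumulates into the exact quadratic-in-$r$ coefficients. This is the same ``a bit trickier'' final step encountered in Lemma \ref{lem:reduce}, but now appearing at second order, where both the number of patterns to merge and the associated coefficient arithmetic grow substantially. It is exactly the part the authors delegate to the companion code, and verifying the precise coefficients by hand is the only delicate portion of the argument.
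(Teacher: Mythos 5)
Your proposal is correct and follows essentially the same route the paper intends: the authors omit the proof of this lemma entirely, stating only that the technique of Lemma~\ref{lem:reduce} carried one order further (retaining up to two equalities, freeing six indices, relabeling noncanonical patterns onto canonical ones) yields it, which is precisely what you describe — including the final conversion of the multiplicative $O(S^{-6\tau})$ error into the additive $O(S^{-\frac{1}{2}-7\tau})$ error via the bound $\kappa_{1}/\chi_{1}=O(S^{-\frac{1}{2}-\tau})$ and the exact cancellation of the common tail factor $\sum_{x_{7}\neq\dots\neq x_{r}}\Pi_{i=7}^{r}f(x_{i})$. The quadratic coefficient bookkeeping you flag as the one delicate step is likewise left unverified in print by the authors, who delegate it to their companion code.
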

We conclude this section with a statement of the corresponding more refined approximation.
\begin{thm}
Let $d_{max}=O(S^{\frac{1}{2}-\tau})$ and recall that $\beta_{k}=\sum_{k=1}^{N}b_{i}^{k}$. We have $$\frac{\|G_{d_{-i}}\|}{\|G_{d_{-j}}\|}=\frac{a_{i}}{a_{j}}\exp([a_{i}-a_{j}]\epsilon_{1}-[a_{i}^{2}-a_{j}^{2}]\epsilon_{2}+[a_{i}^{3}-a_{j}^{3}]\epsilon_{3}+O(S^{\max(-1-2\tau,-8\tau)}))$$
where we define 
\[
\begin{array}{c}
\epsilon_{1}=\frac{E_{\mathbf{b}}[f(x)f(x-1)]}{E_{\mathbf{b}}[f(x)]^{2}}+\frac{E_{\mathbf{b}}[f(x)f(x-1)]^{2}-5E_{\mathbf{b}}[f(x)^{2}]E_{\mathbf{b}}[f(x)f(x-1)]+3E_{\mathbf{b}}[f(x)^{2}f(x-1)]E_{\mathbf{b}}[f(x)]}{E_{\mathbf{b}}[f(x)]^{4}}, \vspace{0.1in} \\
\epsilon_{2}=\frac{-2E_{\mathbf{b}}[f(x)^{2}]E_{\mathbf{b}}[f(x)f(x-1)]+\frac{1}{2}E_{\mathbf{b}}[f(x)f(x-1)]^{2}+E_{\mathbf{b}}[f(x)^{2}f(x-1)]E_{\mathbf{b}}[f(x)]}{E_{\mathbf{b}}[f(x)]^{4}},   \; \; \mbox{and} \; \; \vspace{0.1in}  \\
\epsilon_{3} = \frac{-\frac{107}{3}\beta_{2}^{3}-\frac{11}{2}\beta_{1}\beta_{2}\beta_{3}+\beta_{2}\beta_{4}}{\beta_{1}^{6}}
\end{array}
\]
for 
\[
\begin{array}{c}
E_{\mathbf{b}}[f(x)]:=\sum_{x\in\mathbf{b}}f(x)\; f(x)=x+x^{2}\eta_{1}+\frac{x^{3}\eta_{1}^{2}}{2}-x^{3}\eta_{2}, \vspace{0.1in} \\
\eta_{1} := \frac{\alpha_{2}+2\alpha_{3}\beta_{2}/\alpha_{1}^2}{(\alpha_{1}+\alpha_{2}\beta_{2}/\alpha_{1}^2)^{2}}, \; \; \mbox{and} \; \; 
\eta_{2} := \frac{({\alpha_{2}-\alpha_{1}})^{2}}{2{\alpha_{1}}^{4}}+\frac{{\alpha_{3}\alpha_{1}-2{\alpha_{2}^{2}}}}{\alpha_{1}^{4}}.
\end{array}
\]

\end{thm}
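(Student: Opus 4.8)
The plan is to follow the template of Theorem~\ref{thm:S4} and its $O(S^{\max(-6\tau,-1/2-3\tau)})$ successor, advancing the recursion of Theorem~\ref{thm:arborder1} by one further order and using Lemma~4 in place of Lemma~\ref{lem:reduce}. The input is the order-$6\tau$ approximation for the \emph{out}-degree ratio: by the $\mathbf{a}\leftrightarrow\mathbf{b}$ symmetry of the preceding theorem (and the identification $\alpha_1=\beta_1=S$, valid up to $\pm 1$), this ratio equals $\frac{b_i}{b_j}\exp([b_i-b_j]\eta_1-[b_i^2-b_j^2]\eta_2+O(S^{\max(-6\tau,-1/2-3\tau)}))$, with $\eta_1,\eta_2$ exactly the quantities in the theorem statement. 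First I would recast this as $h(\mathbf{e}_i,\mathbf{d},\mathbf{b})/h(\mathbf{e}_j,\mathbf{d},\mathbf{b})$ for $h(\mathbf{e}_x,\mathbf{d},\mathbf{b})=b_x\exp(b_x\eta_1-b_x^2\eta_2)$, and note that its Taylor truncation is precisely $f(x)=x+x^2\eta_1+\frac{x^3\eta_1^2}{2}-x^3\eta_2$, the multiplicand that enters the product formula for $g$ in Theorem~\ref{thm:arborder1}.

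Next I would check the hypotheses of Theorem~\ref{thm:arborder1}. The decomposition $f=h(\mathbf{e}_i,\cdot)/h(\mathbf{e}_j,\cdot)$ holds by construction, so the remaining task is to verify the stability condition (\ref{eq:assumption}): perturbing $\mathbf{d}$ by $\|\mathbf{d}_1-\mathbf{d}_0\|_1\leq m=O(S^{1/2-\tau})$ (with the degrees of nodes $i,j$ held fixed) moves the moments inside $\eta_1,\eta_2$ only slightly, and the induced change in $h$ supplies the divergence-controlling $O(S^{-1-2\tau})$ term of the stated error, just as the perturbation of $\beta_2$ supplied the $O(S^{-1/2-3\tau})$ term in the previous theorem. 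Granting this, Theorem~\ref{thm:arborder1} reduces the problem to evaluating
\[
\delta_i=\frac{(a_i-1)\sum_{x_1\neq\cdots\neq x_{a_{i-1}}=x_{a_i}}\Pi_{k=1}^{a_i}f(\mathbf{e}_{x_k},\mathbf{b}_k)}{\sum_{x_1\neq\cdots\neq x_{a_{i-1}}\neq x_{a_i}}\Pi_{k=1}^{a_i}f(\mathbf{e}_{x_k},\mathbf{b}_k)}
\]
together with the analogous $\delta_j$. As in Theorem~\ref{thm:S4}, I would clear the dependence on the arbitrary indices $u_k$ by multiplying numerator and denominator by $\Pi_k b_{u_k}\exp(b_{u_k}\eta_1-b_{u_k}^2\eta_2)$, while carefully retaining that the coincidence $x_{a_{i-1}}=x_{a_i}$ forces the final factor to be read at a reduced out-degree; this is the mechanism that generates the mixed moments $E_{\mathbf{b}}[f(x)f(x-1)]$ and $E_{\mathbf{b}}[f(x)^2f(x-1)]$ appearing in $\epsilon_1,\epsilon_2$.

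The computational core is then to apply Lemma~4 to replace the two unbounded sums in $\delta_i$ by the finite surrogates $\chi_1$ and $\kappa_1$, giving $\delta_i=(a_i-1)\kappa_1/\chi_1+O(S^{-1/2-7\tau})$, and to substitute the polynomial $f(x)=x+x^2\eta_1+\frac{x^3\eta_1^2}{2}-x^3\eta_2$. Regrouping the resulting sums into $E_{\mathbf{b}}[f(x)]$, $E_{\mathbf{b}}[f(x)^2]$, $E_{\mathbf{b}}[f(x)f(x-1)]$ and $E_{\mathbf{b}}[f(x)^2f(x-1)]$ and simplifying expresses $\delta_i$ as a polynomial in $a_i$ with the stated moment coefficients; forming $\log(1+\delta_i)-\log(1+\delta_j)$ and expanding the logarithm to third order then collects the node-degree dependence into the $[a_i-a_j]\epsilon_1-[a_i^2-a_j^2]\epsilon_2+[a_i^3-a_j^3]\epsilon_3$ pattern, the cubic power arising from the quadratic-in-$r$ coefficients of Lemma~4 combined with the cubic term of the logarithm. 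I expect the dominant obstacle to be exactly this bookkeeping: tracking which index coincidences feed the reduced-degree factors $f(x-1)$, verifying that the $r=a_i$ coefficients of Lemma~4 collapse to the precise rational constants in $\epsilon_2,\epsilon_3$, and confirming that every remaining contribution is genuinely absorbed into $O(S^{\max(-1-2\tau,-8\tau)})$ --- the step the authors delegate to machine verification.
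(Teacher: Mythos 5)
Your plan matches the paper's approach: the paper states this theorem without a written proof, indicating only that it follows by feeding the order-$6\tau$ approximation (recast for the out-degree ratio, which is exactly where your $\eta_1,\eta_2$ and the truncation $f(x)=x+x^{2}\eta_{1}+\frac{x^{3}\eta_{1}^{2}}{2}-x^{3}\eta_{2}$ come from) back through Theorem~\ref{thm:arborder1} and evaluating the resulting sums with Lemma~4 in place of Lemma~\ref{lem:reduce}, which is precisely the route you describe. The moment bookkeeping you identify as the main obstacle is the same computation the authors delegate to machine verification, so your outline is as complete as the paper's own treatment of this statement.
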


\section{On the Assumptions of Theorem \ref{thm:arborder1}}
\label{section:assumptions}
To prove that the assumptions made in Theorem \ref{thm:arborder1} (and in Theorem \ref{thm:arborder2}) hold, it is helpful to have  results to simplify the analysis of ratios of sums of the form $\sum_{x_1\neq...\neq x_r}\Pi_{i=1}^{r}f(x_i)$ that are more general than the results we proved in Section 5. First, it is helpful to introduce some notation.

\begin{mydef}
\label{def:alg}
Define the set of natural numbers $I = \{ 1, \ldots, r \}$.  For any collection $A_=$ of disjoint subsets of $I$, we say that a sequence $\mathbf{x}\in\mathbb{N}^{r}$ satisfies $A_=$ if for every $\{ i_1, \ldots, i_m \} \in A_=$, we have $x_{i_1} = x_{i_2} = \ldots = x_{i_m}$.
Given that $\mathbf{x}$ satisfies $A_=$ and that $A_{\neq}$ is a subset of $I$, we say that $\mathbf{x}$ satisfies $A_{\neq}$ if for all $i,j \in A_{\neq}$ such that $i,j$ are not both contained in any $A \in A_=$, we have $x_i \neq x_j$.
We denote the set of all $\mathbf{x}$ that satisfy both $A_{=}$ and $A_{\neq}$ by the notation $A_= \otimes A_{\neq}$.
\end{mydef}

Before proceeding with the theorem statements that enable us to construct approximations for ratios of sums of the form $\sum_{x_1\neq...\neq x_r}\Pi_{i=1}^{r}f(x_i)$, we wish to motivate some of the notation that we use.  To approximate $\sum_{x_1\neq...\neq x_r}\Pi_{i=1}^{r}f(x_i)$, we wish to  remove some (fixed) number of $x_i's$ from the list of required inequalities.  For example, we can express $\sum_{x_1\neq x_2} f(x_1)f(x_2) = \sum_{x_1,x_2}f(x_1)f(x_2) - \sum_{x_1=x_2}f(x_1)f(x_2)$, where in the first term on the right hand side, we no longer have to worry about the `does not equal' relationship of $x_1$ and $x_2$ found on the left hand side.  To help us identify the dominating terms, we introduce sets of the form $A_=^{(i,c)}\otimes A_{\neq}^{(i,c)}$, where the $i$ denotes the number of equal signs found under the summation and $c$ denotes a particular instance where  $i$ of these variables are equal.  For example, suppose we want to simplify $\sum_{x_1\neq x_2\neq x_3} f(x_1)f(x_2)f(x_3) = \sum_{x_1, x_2\neq x_3} f(x_1)f(x_2)f(x_3) - \sum_{x_1=x_2\neq x_3} f(x_1)f(x_2)f(x_3)- \sum_{x_1=x_3\neq x_2} f(x_1)f(x_2)f(x_3)$.  On the right hand side there are two summations involving one equality under the summation.  Hence we can define $A_=^{(1,0)}\otimes A_{\neq}^{(1,0)}=\{\{1,2\}\}\otimes \{1,2,3\}$, $A_=^{(1,1)}\otimes A_{\neq}^{(1,1)}=\{\{1,3\}\}\otimes \{1,2,3\}$ representing those two summations.  Note that in each of these examples $i=1$, but since there are two possible choices, $c$ can be either $0$ or $1$.  
Now we present two results regarding the sums of interest.  Since the proofs of the two theorems are essentially identical, we only prove one of them.

\begin{thm}
\label{thm:alg}
Suppose that $\sum_{x_m}f(x_m)=S$ and that $\max f(x_m) = O(S^{\frac{1}{2}-\tau})$ and $r = O(S^{\frac{1}{2}-\tau})$.
Consider $\sum_{x_1\neq...\neq x_r} \Pi_{m=1}^{r} f(x_m)$.  Fix a natural number $k$ such that $2k \leq r$ (where $2k$ is the number of variables that we remove from the list of inequalities so that they can equal other variables).  Then for all $j\leq k$ we can write
\begin{equation}
\label{eq:eqneqthm}
\sum_{x_1\neq...\neq x_r} \Pi_{m=1}^{r} f(x_m)=\sum_{i=0}^{j}\sum_{c=0}^{h(i,k)}p_{(i,c)}\sum_{A_=^{(i,c)}\otimes A_{\neq}^{(i,c)}}\Pi_{m=1}^{r} f(x_m)
\end{equation}
where for each $i$, 
$h(i,k)+1$ represents a number of arrangements of variables with $i$ variables equal to each other,  
 and $h(0,k)=0$;  for all $i, c$, $p_{(i,c)}$ is a polynomial in $r$;  and 
for all $i<j$ and for all $c$, $A^{(i,c)}_=$ consists of a collection of subsets of $\{ 1, \ldots, 2k \}$ with $\sum_{A\in A^{(i,c)}_=}(|A|-1)=i$ and $A^{(i,c)}_{\neq}=\{2k+1,...,r\}$.  
In addition,
\begin{itemize}
\item for all $c$, $\sum_{A\in A^{(j,c)}_=}(|A|-1)=j$,
\item  $A^{(j,c)}_{\neq}=\{s,s+1,...,r\}$ for some $s\leq 3k$, and 
\item if $A\in A^{(j,c)}_=$ has a nontrivial intersection with $A^{(j,c)}_{\neq}$ then $A = \{s,s+1,...,s+t\}$ for some $t$ such that $s+t<r$; 
moreover, there can only be one such $A\in A^{(j,c)}_=$ that has a nontrivial intersection with $A^{(j,c)}_{\neq}$.
\end{itemize}
Finally, $$\frac{p_{(i,c)}\sum_{A_=^{(i,c)}\otimes A_{\neq}^{(i,c)}}\Pi_{m=1}^{r} f(x_m)}{\sum_{x_{1},...,x_{2k},x_{2k+1}\neq...\neq x_{r}}\Pi_{m=1}^{r} f(x_m)}=O(S^{-2i\tau}).$$

\end{thm}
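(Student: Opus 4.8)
The plan is to prove the decomposition (\ref{eq:eqneqthm}) by induction on $j$, with $k$ held fixed, using the same exact ``peeling'' identities that drive Lemma \ref{lem:count} and Lemma \ref{lem:reduce}, and then to read off the order estimate from the per-equality counting bound already established there. It is important to stress that (\ref{eq:eqneqthm}) is an \emph{exact} identity; the parameter $j$ merely records how many orders of the expansion have been fully resolved, while the $i=j$ summands play the role of a frontier remainder that still carries a moving inequality block $A_{\neq}^{(j,c)}=\{s,\ldots,r\}$ with $s\leq 3k$. The induction will therefore maintain an exact identity throughout, and the final displayed estimate $O(S^{-2i\tau})$ will be a separate, purely asymptotic consequence.

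For the base case $j=0$ I would take $A_=^{(0,0)}=\emptyset$, $A_{\neq}^{(0,0)}=\{1,\ldots,r\}$, and $p_{(0,0)}=1$, so that the right-hand side of (\ref{eq:eqneqthm}) is literally $\sum_{x_1\neq\cdots\neq x_r}\Pi_{m=1}^{r}f(x_m)$ and the identity is trivial; this single term satisfies the stated $i=j$ frontier conditions with $s=1\leq 3k$. For the inductive step, assuming (\ref{eq:eqneqthm}) holds for some $j<k$, the terms with $i<j$ are already in ``clean'' normal form (inequality set exactly $\{2k+1,\ldots,r\}$ and every equal-block inside $\{1,\ldots,2k\}$), so they carry over verbatim; only the $h(j,k)+1$ frontier terms with $i=j$ must be processed.

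To process a frontier term $p_{(j,c)}\sum_{A_=^{(j,c)}\otimes\{s,\ldots,r\}}\Pi f$, I would peel its smallest frontier index $x_s$ off the inequality list exactly as in Lemma \ref{lem:count}, writing the constraint ``$x_s$ differs from the frontier variables'' as a free sum minus a sum in which $x_s$ is forced equal to one of the distinct frontier values, and then using the symmetry of those distinct variables to collapse each choice of match into one representative term carrying a coefficient affine in $r$. Iterating until the freed variables fill $\{1,\ldots,2k\}$ converts the frontier term into a clean $i=j$ term (which is a legitimate $i<j{+}1$ term for the next stage) together with new terms in which one extra equality has been created, i.e.\ the new $i=j{+}1$ frontier. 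Whenever such an equality is created with both of its indices beyond $2k$, I would relabel indices exactly as in the proof of Lemma \ref{lem:reduce} (the manoeuvre turning $x_1=x_2,x_3,x_4=x_5$ into $x_1=x_2,x_3=x_4$), so that every surviving equal-block can be pushed to the front and the new frontier again begins at some $s'\leq 3k$; the headroom up to $3k$ rather than $2k$ is precisely what absorbs the at-most-$k$ auxiliary free indices extracted during relabeling, and it is what guarantees the single-interval-block condition on $A_=^{(j+1,c)}$. Merging terms of identical equality-type after relabeling keeps the number of types finite (this defines $h(j{+}1,k)$) and keeps each $p_{(j+1,c)}$ a polynomial in $r$ of degree at most $j{+}1$, since every peeling step multiplies by one factor affine in $r$ and summation of same-degree polynomials cannot raise the degree.

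Finally, for the order estimate I would invoke the counting bound underlying (\ref{eq:upbound})--(\ref{eq:lowbound}): collapsing two summation variables into an equal-block replaces a factor $\sum_x f(x)=S$ by a factor $\sum_x f(x)^2\leq \max f(\cdot)\cdot S=O(S^{3/2-\tau})$, so that relative to the all-free base term $\sum_{x_1,\ldots,x_{2k},\,x_{2k+1}\neq\cdots\neq x_r}\Pi f$ each unit of $\sum_{A}(|A|-1)$ costs a factor $O(S^{-1/2-\tau})$; multiplying by the accompanying polynomial factor $r^{O(1)}=O(S^{(1/2-\tau)})$ per equality then yields $O(S^{-2\tau})$ per equality and hence $O(S^{-2i\tau})$ for an $i$-equality term, exactly as in Lemma \ref{lem:count} and Corollary \ref{cor:count2}. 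I expect the main obstacle to be combinatorial bookkeeping rather than analysis: namely, verifying that the relabeling step can always restore the prescribed frontier normal form (a single interval equal-block meeting $A_{\neq}^{(j,c)}=\{s,\ldots,r\}$ with $s\leq 3k$) and, crucially, that the merged coefficients $p_{(i,c)}$ stay polynomial in $r$ with degree bounded by $i$, since it is this degree control that lets the $O(S^{-2i\tau})$ estimate survive multiplication by $p_{(i,c)}$.
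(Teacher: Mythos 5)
Your proposal is correct and follows essentially the same route as the paper's proof: induction on $j$ with the peel-one-inequality identity generating coefficients affine in $r$, relabeling to push equal-blocks back into $\{1,\ldots,2k\}$ (which is exactly why the frontier start $s$ only needs to be bounded by $3k$), and the per-equality cost $r\cdot\max f(\cdot)/S=O(S^{-2\tau})$ giving the final estimate. The only cosmetic difference is that you anchor the induction at $j=0$ rather than $j=1$, which changes nothing.
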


\begin{thm}
\label{thm:alg2}
Suppose that $\sum_{x_m}f(x_m)=S$ and that $\max f(x_m) = O(S^{\frac{1}{2}-\tau})$ and $r = O(S^{\frac{1}{2}-\tau})$.
Consider $\sum_{x_1=x_2\neq...\neq x_r} g(x_1)\Pi_{m=2}^{r} f(x_m)$.  For simplicity, we assume that the elements in $A_=$ are disjoint.  Fix a number $k$ (where $2k$ is the number of variables removed from the list of inequalities). Then for all $j\leq k$ we can write
$$\sum_{x_1=x_2\neq...\neq x_r}g(x_1) \Pi_{m=2}^{r} f(x_m)=\sum_{i=0}^{j}\sum_{c=0}^{h(i,k)}p_{(i,c)}\sum_{A_=^{(i,c)}\otimes A_{\neq}^{(i,c)}}g(x_1)\Pi_{m=2}^{r} f(x_m)$$
where for each $i$, 
$h(i,k)+1$ represents a number of arrangements of variables with $i$ variables equal to each other, and 
$h(0,k)=0$; for all $i, c$, $p_{(i,c)}$ is a polynomial in $r$;  and for all $i<j$ and for all $c$, $A^{(i,c)}_=$ consists of a collection of subsets of $\{1,...,2k\}$, $A^{(i,c)}_{\neq}=\{2k+1,...,r\}$ with $\sum_{A\in A^{(i,c)}_=}(|A|-1)=i+1$.
In addition,
\begin{itemize}
\item for all $c$, $\sum_{A\in A^{(j,c)}_=}(|A|-1)=j+1$,
\item $A^{(j,c)}_{\neq}=\{s,s+1,...,r\}$  for some  $s\leq 3k$, and 
\item if $A\in A^{(j,c)}_=$ has a nontrivial intersection with $A^{(j,c)}_{\neq}$ then $A = \{s,s+1,...,s+t\}$ for some $t$ such that $s+t<r$; 
moreover, there can only be one such $A\in A^{(j,c)}_=$ that has a nontrivial intersection with $A^{(j,c)}_{\neq}$.
\end{itemize}

Finally, for all $i\leq j$, $$\frac{p_{(i,c)}\sum_{A_=^{(i,c)}\otimes A_{\neq}^{(i,c)}}g(x_1)\Pi_{m=2}^{r} f(x_m)}{\sum_{x_{1}=x_{2},x_{3},...,x_{2k},x_{2k+1}\neq...\neq x_{r}} g(x_1)\Pi_{m=2}^{r} f(x_m)}=O(S^{-2i\tau}).$$
\end{thm}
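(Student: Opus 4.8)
The plan is to prove the stated decomposition and the accompanying order estimate \emph{simultaneously} by induction on the truncation depth $j$, with a single elementary ``freeing'' identity as the engine and the bounds \eqref{eq:upbound}--\eqref{eq:lowbound} of Lemma \ref{lem:count} controlling orders. Since the weight $g(x_1)$ never enters the inequality manipulations---only the equality class containing position $1$ matters, and $g(\cdot)\le f(\cdot)$ so every bound used for $f$ applies verbatim---the argument is word for word the one needed for Theorem \ref{thm:alg}. I describe it for Theorem \ref{thm:alg2}; deleting $g$ recovers Theorem \ref{thm:alg}. The forced constraint $x_1=x_2$ contributes a \emph{baseline class} $\{1,2\}$ that is always present in $A_=$, which is exactly why the bookkeeping records $\sum_{A\in A_=^{(i,c)}}(|A|-1)=i+1$ rather than $=i$.

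The engine frees the least index $s$ still required to be distinct. Writing $A_=^{\,s\to t}$ for the collection obtained by adjoining $s$ to the class containing $t$ (creating a new pair if $t$ is otherwise free), and supposing $A_{\neq}=\{s,\ldots,r\}$, we have
$$
\sum_{A_=\,\otimes\,\{s,\ldots,r\}} g(x_1)\prod_{m=2}^{r} f(x_m)
= \sum_{A_=\,\otimes\,\{s+1,\ldots,r\}} g(x_1)\prod_{m=2}^{r} f(x_m)
- \sum_{t}\ \sum_{A_=^{\,s\to t}\,\otimes\,\{s+1,\ldots,r\}} g(x_1)\prod_{m=2}^{r} f(x_m),
$$
where $t$ ranges over the targets $x_s$ may equal, i.e. the baseline/existing classes (counted once) together with the remaining distinct variables $x_{s+1},\ldots,x_r$. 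This is precisely the step iterated in Lemmas \ref{lem:count} and \ref{lem:reduce}. The base case $j=0$ is the identity itself ($p_{(0,0)}=1$, $s=1$). For the inductive step I apply the engine to each boundary ($i=j$) term: the first branch advances $s\mapsto s+1$, keeping $i=j$ and pushing the term toward the completed form with $A_{\neq}=\{2k+1,\ldots,r\}$, while each collapse branch raises $i$ to $j+1$. Two invariants are maintained by construction: (i) a collapse only merges the current boundary variable into an existing class or a single distinct variable, so at most one class ever straddles the $A_{\neq}$ boundary and it is a contiguous block $\{s,\ldots,s+t\}$; (ii) each summation over targets $t$ contributes an affine-in-$r$ multiplicity, so the $p_{(i,c)}$ remain polynomial in $r$ of degree $i$. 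The bound $s\le 3k$ and the confinement of all \emph{completed} identifications to $\{1,\ldots,2k\}$ follow by counting how far the boundary advances before the $j+1$ identifications are parked; the relabeling device of Lemma \ref{lem:reduce}---identifying terms that agree after a permutation of indices (as in the merger producing $2r-7$ from $r-3$ and $r-4$ there)---collapses the proliferating terms into the claimed finite family of $h(i,k)+1$ types.

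For the order estimate I argue exactly as in Lemma \ref{lem:count}, relative to the reference sum $\sum_{x_1=x_2,\,x_3,\ldots,x_{2k},\,x_{2k+1}\neq\cdots\neq x_r}$ appearing in the denominator of the claim. A class of $g$ identified positions contributes at most $\sum_x f(x)^{g}\le(\max f)^{g-1}\sum_x f(x)=O\!\big(S^{(1/2-\tau)(g-1)}\big)\,S$, in place of the $S^{g}$ that a block of $g$ free variables would give, a relative factor $O\!\big(S^{-(1/2+\tau)(g-1)}\big)$. Summing the exponents over the classes that exceed the baseline yields a gain $O\!\big(S^{-(1/2+\tau)\,i}\big)$, while $p_{(i,c)}=O(r^{i})=O\!\big(S^{(1/2-\tau)i}\big)$; the product is $O(S^{-2\tau i})$, as asserted (the weight $g\le f$ only strengthens these bounds).

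The hard part will not be the order estimate, which is a routine generalization of \eqref{eq:upbound}--\eqref{eq:lowbound}, but the structural bookkeeping: proving inductively that the collapse operations preserve the precise invariants---a single contiguous straddling class, $A_{\neq}=\{s,\ldots,r\}$ with $s\le 3k$, and all completed identifications confined to $\{1,\ldots,2k\}$---and that the relabeling genuinely merges the generated terms into the stated finite family with polynomial coefficients. Pinning down the multiplicities $p_{(i,c)}$ (as in the explicit $4r-10$, $2r-7$, $r-2$ of Lemma \ref{lem:reduce}) requires careful accounting, at each stage, of how many distinct variables the freed index may collide with.
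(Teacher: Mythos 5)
Your proposal follows essentially the same route as the paper: the paper proves Theorem \ref{thm:alg} by exactly this induction (the freeing identity, the case analysis with relabeling to confine completed identifications to $\{1,\ldots,2k\}$, and the per-step order bound $r\cdot\max f/S=O(S^{-2\tau})$), and then states that Theorem \ref{thm:alg2} is proved identically, which is precisely your observation that the weight $g(x_1)\le f(x_1)$ plays no role beyond shifting the bookkeeping from $i$ to $i+1$ via the baseline class $\{1,2\}$. The argument and the order accounting match; only the detailed case analysis is left as a sketch, as it is (by reference) in the paper itself.
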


\begin{proof}[Proof of Theorem \ref{thm:alg}]  We proceed by induction.
For the base case of $j=1$,  we start by showing that the stated conditions hold in the case of $i=j$ (such that the bulleted list in the theorem statement applies) and then we come back to $(i,c)=(0,0)$.   We have seen previously that 
$$\sum_{x_1\neq...\neq x_r} \Pi_{m=1}^{r} f(x_m)=\sum_{x_1,x_2\neq...\neq x_r} \Pi_{m=1}^{r} f(x_m)-(r-1)\sum_{x_1=x_2\neq...\neq x_r} \Pi_{m=1}^{r} f(x_m).$$
We denote the set of $\mathbf{x}$ over which the final summation occurs as $A_=^{(1,0)}\otimes A_{\neq}^{(1,0)}$ where $A_=^{(1,0)}=\{\{1,2\}\}$, the variables that equal each other, and $A_{\neq}^{(1,0)}=\{1,2,...,r\}$, the variables that are required not to equal some (at least one) other variable.   The coefficient $p_{(1,0)}=-(r-1)$ is also a polynomial in $r$ and $\sum_{A\in A^{(1,0)}_=}(|A|-1)=1$ (where we use the superscript $(1,0)$ to denote that $\sum_{A\in A^{(1,0)}_=}(|A|-1)=1$ and the $0$ keeps track of this particular instance where $\sum_{A\in A_=}(|A|-1)=1$).  

We next repeat this argument for $$\sum_{x_1,x_2\neq...\neq x_r} \Pi_{m=1}^{r} f(x_m)=\sum_{x_1,x_2,x_3\neq...\neq x_r} \Pi_{m=1}^{r} f(x_m)-(r-2)\sum_{x_1,x_2=x_3\neq...\neq x_r} \Pi_{m=1}^{r} f(x_m)$$ and in context to the second summation on the right hand side, define $p_{(1,1)}=-(r-2)$,$A_=^{(1,1)}=\{\{2,3\}\}$, the variables that equal one another, and $A_{\neq}^{(1,1)}=\{2,3,4,..,r\}$, the variables that are required not to equal some other variable (where we use the superscript $(1,1)$ to denote that $\sum_{A\in A^{(1,1)}_=}(|A|-1)=1$ and the $1$ keeps track of this new instance where $\sum_{A\in A_=}(|A|-1)=1$).
We stop once we reach \begin{equation}\label{eq:finalbase}\sum_{x_1,x_2,..,x_{2k}\neq...\neq x_r} \Pi_{m=1}^{r} f(x_m)=\sum_{x_1,...,x_{2k},x_{2k+1}\neq...\neq x_r} \Pi_{m=1}^{r} f(x_m)-(r-2k)\sum_{x_1,...,x_{2k}=x_{2k+1}\neq...\neq x_r} \Pi_{m=1}^{r} f(x_m). 
\end{equation}

Finally, based on the the first sum on the right hand side of equation (\ref{eq:finalbase}),  we define $p_{(0,0)}=1$,$A_=^{(0,0)}=\emptyset$, as there are no variables that are required to equal one another, $A_{\neq}^{(0,0)}=\{2k+1,...,r\}$,the variables that are required not to equal some other variable, as $\sum_{A\in A^{(0,0)}_=}(|A|-1)=0$,  and note that only variables with an index of $2k+1$ or greater are constrained so that they cannot equal one another. 
Note that in the theorem statement, the number of instances where we have sets of the form $A_{=}^{(1,x)}\otimes A_{\neq}^{(1,x)}$ is $2k$, but since we start with $c=0$, the maximumum value of $c$ will be $2k-1$ (and hence define $h(1,k)=2k-1$.)

From the above inductive construction, we know that for all $c\in\{0,1,..,2k-1\}$,  
\begin{equation}\label{eq:polybound}
|p_{(1,c)}|\leq r.
\end{equation}.  In addition, since the range of $f$ consists of the non-negative real numbers, we know that for all $c$, \begin{equation}\label{eq:ineqalg}\sum_{A_=^{(1,c)}\otimes A_{\neq}^{(1,c)}}\Pi f(x_m)\leq \sum_{A_=^{(1,2k-1)}\otimes A_{\neq}^{(1,2k-1)}}\Pi f(x_m)
\end{equation} as the number of variables that are required not to equal some other variable in $A_{\neq}^{(1,2k-1)}$ is smaller than the analogous list for any other instance of $A_{\neq}^{(1,c)}$.  Hence by inequalities (\ref{eq:polybound}) and (\ref{eq:ineqalg}), we find that 

 $$\frac{|\sum_{c=0}^{2k-1}p_{(1,c)}\sum_{A_=^{(1,c)}\otimes A_{\neq}^{(1,c)}}\Pi f(x_m)|}{\sum_{x_1,...,x_{2k},x_{ 2k+1}\neq...\neq x_{r}}\Pi f(x_{m})}\leq 2kr\frac{\sum_{A_=^{(1,2k-1)}\otimes A_{\neq}^{(1,2k-1)}}\Pi f(x_m)}{\sum_{x_1,...,x_{2k},x_{ 2k+1}\neq...\neq x_{r}}\Pi f(x_{m})}=$$

 $$2kr\frac{\sum_{x_1,...,x_{2k-1},x_{2k}=x_{2k+1}\neq ...\neq x_{r}}\Pi f(x_m)}{\sum_{x_1,...,x_{2k},x_{2k+1}\neq...\neq x_{r}}\Pi f(x_{m})}=2kr\frac{S^{2k-1}\sum_{x_{2k}=x_{2k+1}\neq ...\neq x_{r}}\Pi f(x_m)}{S^{2k}\sum_{x_{2k+1}\neq...\neq x_{r}}\Pi f(x_{m})}\leq$$
 
$$2krf(x_{max})\frac{\sum_{x_{2k+1}\neq...\neq x_{r}}\Pi f(x_m)}{S\sum_{x_{2k+1}\neq...\neq x_{r}}\Pi f(x_{m})}
=\frac{2krf(x_{max})}{S}=2k\frac{O(S^{1-2\tau})}{S}=O(S^{-2\tau})$$ 
where all products without indices labeled are taken over the set of $m$ values in the index set of the summation that precedes them.  Thus, we have attained our desired results for $j=1$.
 
To proceed with induction, we now assume that the inductive statement holds for $j=n-1$ and prove that it is true for $j=n$, provided $n\leq k$.
The inductive hypothesis when $j=n-1$ yields 
$$\sum_{x_1\neq...\neq x_r} \Pi_{m=1}^{r} f(x_m)=\sum_{i=0}^{n-1}\sum_{c=0}^{h(i,k)}p_{(i,c)}\sum_{A_=^{(i,c)}\otimes A_{\neq}^{(i,c)}}\Pi_{m=1}^{r} f(x_m).$$

To prove that the inductive statement holds  for $j=n$ we manipulate the sets of the form $A_=^{(n-1,c)}\otimes A_{\neq}^{(n-1,c)}$ in our inductive hypothesis (where $j=n-1$).  From our inductive hypothesis we know that 
$\sum_{A\in A^{(n-1,c)}_=}(|A|-1)=n-1$;  for some $s\leq 3k$, $A^{(n-1,c)}_{\neq}=\{s,s+1,...,r\}$; and there exists at most one set in $A^{(n-1,c)}_=$, of the form $\{s,s+1,...,s+t\}$, that has a nontrivial intersection with $A^{(n-1,c)}_{\neq}$.

We proceed as in the base case. 
\newline 
\textbf{Case 1:} Suppose that indeed for some $s,t$, $\{s,s+1,...,s+t\}\in A^{(n-1,c)}_=$ and $A^{(n-1,c)}_{\neq}=\{s,s+1,...,r\}$. By assumption we can write $A_{=}^{(n-1,c)}=A_{1}\cup \{s,s+1,...,s+t\}$ where we define $A_{1}$ to be $A_{=}^{(n-1,c)}-\{s,s+1,...,s+t\}$. Since $x_s=x_{s+1}=...=x_{s+t}$, we have the following equality:

\begin{equation} \label{eq:alg}
p_{(n-1,c)}\sum_{\substack{A_{=}^{(n-1,c)}\otimes \\A_{\neq}^{(n-1,c)}}}\Pi f(x_m)=p_{(n-1,c)}\sum_{\substack{A_{=}^{(n-1,c)}\otimes\\ \{s+t+1,...,r\}}}\Pi f(x_m)-p_{(n-1,c)}(r-s-t)\sum_{\substack{A_{1}\cup\\ \{s,...,s+t+1\}\otimes \{s,...,r\}}}\Pi f(x_m).
\end{equation}

The last term on the right hand side of (\ref{eq:alg})  is a sum over an index set with $n$ equalities.
Analogous to the proof of the base case, we express this index set 
as $A_=^{(n,c_{*})}\otimes A_{\neq}^{(n,c_{*})}$ for some label $c_{*}$. Of course, the product of the two polynomial coefficients  of this term is a polynomial as well, which we can take as $p_{(n,c_{*})}$.
To see how this works, we proceed for each $c$ depending on whether $s+t\leq 2k$ or $s+t>2k$.  

\textbf{Case 1a}: $s+t\leq 2k$ \newline
Analogous to the proof in the base case, the expression (\ref{eq:alg}) can be decomposed as  
$$\sum_{\substack{A_{=}^{(n-1,c)}\otimes\\ \{s+t+1,...,r\}}}\Pi f(x_m)=\sum_{\substack{A_{=}^{(n-1,c)}\otimes \\ \{s+t+2,...,r\}}}\Pi f(x_m)-(r-s-t-1)\sum_{\substack{A_{=}^{(n-1,c)}\cup\\ \{s+t+1,s+t+2\}\otimes \{s+t+1,...,r\}}}\Pi f(x_m).$$
We use the last term to construct $A_=^{(n,c_{*})}\otimes A_{\neq}^{(n,c_{*})}$ for some $c_*$.
(Note that $\sum_{A\in A^{(n,c_*)}_=}(|A|-1)=n$).
We continue this process until we reach
\begin{equation}
\label{eq:n-1}
\sum_{A_{=}^{(n-1,c)}\otimes \{2k,...,r\}}\Pi f(x_m)=\sum_{A_{=}^{(n-1,c)}\otimes \{2k+1,...,r\}}\Pi f(x_m)-(r-2k)\sum_{A_{=}^{(n-1,c)}\cup\{2k,2k+1\}\otimes \{2k,...,r\}}\Pi f(x_m).
\end{equation}
We note that with $j=n$, the $i=n-1$ terms in (\ref{eq:eqneqthm}) may be different from the $i=n-1$ terms with $j=n-1$.
Indeed, based on (\ref{eq:n-1}), we now redefine (for $j=n$) $A_=^{(n-1,c)}\otimes A_{\neq}^{(n-1,c)} = A_=^{(n-1,c)}\otimes \{2k+1,...,r\}.$

\textbf{Case 1b}: $s+t >2k$ \newline
Now since by the assumption of the inductive hypothesis, $\sum_{A\in A_=^{(n-1,c)}}(|A|-1)=n-1\leq k-1\implies \sum_{A\in A_=^{(n-1,c)}}|A|\leq 2k-2$, there are at least $s+t-2k+2$ values in $\{1,...,s+t\}$ that are not in $A$ for all $A\in A_=^{(n-1,c)}$. We then create a relabeling where all values greater than $s+t$ are left alone (they map to themselves) and that the values that 
are not in $A$ for all $A\in A_=^{(n-1,c)}$ include $\{2k+1,...,s+t\}$.  (Hence the largest element in all of the sets in $A_=^{(n-1,c)}$ is bounded by $2k$.)

Expression (\ref{eq:alg}) can be decomposed as 

$$\sum_{A_{=}^{(n-1,c)}\otimes \{s+t+1,...,r\}}\Pi f(x_m)=\sum_{A_{=}^{(n-1,c)}\otimes \{s+t,...,r\}}\Pi f(x_m)+(r-s-t)\sum_{A_{=}^{(n-1,c)}\cup\{s+t,s+t+1\}\otimes \{s+t,...,r\}}\Pi f(x_m)$$

As in Case 1a, we use the last term to construct $A_=^{(n,c_{*})}\otimes A_{\neq}^{(n,c_{*})}$ for some $c_*$.  We repeat this equality until we attain

$$\sum_{A_{=}^{(n-1,c)}\otimes \{2k+2,...,r\}}\Pi f(x_m)=\sum_{A_{=}^{(n-1,c)}\otimes \{2k+1,...,r\}}\Pi f(x_m)+(r-2k-1)\sum_{A_{=}^{(n-1,c)}\cup\{2k,2k+1\}\otimes \{2k,...,r\}}\Pi f(x_m)$$

\textbf{Case 2:} Suppose that $A^{(n-1,c)}_{\neq}=\{s,s+1,...,r\}$ and for all $t$, $\{s,s+1,...,s+t\}\notin A^{(n-1,c)}_=$.  We again consider two subcases. \newline

\textbf{Case 2a:} $s\leq 2k$.

To achieve the desired form, we rewrite 

$$\sum_{A_{=}^{(n-1,c)}\otimes \{s,...,r\}}\Pi f(x_m)=\sum_{A_{=}^{(n-1,c)}\otimes \{s+1,...,r\}}\Pi f(x_m)-(r-s-1)\sum_{A_{=}^{(n-1,c)}\cup\{s,s+1\}\otimes \{s,...,r\}}\Pi f(x_m).$$
We use the last term to construct $A_=^{(n,c_{*})}\otimes A_{\neq}^{(n,c_{*})}$ for some $c_*$, proceeding inductively until we reach 

$$\sum_{A_{=}^{(n-1,c)}\otimes \{2k,...,r\}}\Pi f(x_m)=\sum_{A_{=}^{(n-1,c)}\otimes \{2k+1,...,r\}}\Pi f(x_m)-(r-2k-1)\sum_{A_{=}^{(n-1,c)}\cup\{2k,2k+1\}\otimes \{2k,...,r\}}\Pi f(x_m).$$

\textbf{Case 2b:} $s > 2k$. 

If $A_{=}^{(n-1,c)}$ contains sets with elements that are greater than $2k$, we can again perform a relabeling scheme as in Case 1b such that all of the elements in $A_{=}^{(n-1,c)}$ are guaranteed to be bounded above by $2k$:
$$\sum_{A_{=}^{(n-1,c)}\otimes \{s,...,r\}}\Pi f(x_m)=\sum_{A_{=}^{(n-1,c)}\otimes \{s-1,...,r\}}\Pi f(x_m)+(r-s-1)\sum_{A_{=}^{(n-1,c)}\cup\{s-1,s\}\otimes \{s-1,...,r\}}\Pi f(x_m).$$
We use the last term to construct $A_=^{(n,c_{*})}\otimes A_{\neq}^{(n,c_{*})}$ for some $c_*$ and we repeat this equality until we attain

$$\sum_{A_{=}^{(n-1,c)}\otimes \{2k+2,...,r\}}\Pi f(x_m)=\sum_{A_{=}^{(n-1,c)}\otimes \{2k+1,...,r\}}\Pi f(x_m)+(r-2k-1)\sum_{A_{=}^{(n-1,c)}\cup\{2k,2k+1\}\otimes \{2k,...,r\}}\Pi f(x_m).$$

To complete the proof, we need to verify the following properties from our theorem statement:
\begin{itemize}
\item $A^{(i,c)}_=$ is a collection of disjoint subsets of $\{ 1, \ldots, 2k \}$.
This fact follows from the conclusion of each of the cases in the above inductive argument.
\item $\sum_{A\in A^{(i,c)}_=}(|A|-1)=i$. 
For each $A^{(i,c_*)}_=$ constructed from an  $A^{(i-1,c)}_=$ ,  it is easy to show that $\sum_{A\in A^{(i-1,c)}_=}(|A|-1)=i-1$, and then the inductive step adds two more terms to the sum to yield  $\sum_{A\in A^{(i,c_*)}_=}(|A|-1)=i$.
\item For $i< j$, $A^{(i,c)}_{\neq}=\{2k+1,...,r\}$.
This fact is easily verified (and follows trivially) from the construction through the inductive hypothesis.
\item $A^{(j,c)}_{\neq}=\{s,s+1,...,r\}$ for some $s\leq 3k$. 
This fact is also easily verified (and follows trivially) from the proof by induction above.
\item If $A\in A^{(j,c)}_=$ has a nontrivial intersection with $A^{(j,c)}_{\neq}$ then $A = \{s,s+1,...,s+t\}$; 
in addition, there can only be one such $A\in A^{(j,c)}_=$ that has a nontrivial intersection with $A^{(j,c)}_{\neq}$.
This fact is also easily verified (and follows trivially) from the proof by induction above.
\item \begin{equation}\label{eq:algsize}\frac{p_{(i,c)}\sum_{A_=^{(i,c)}\otimes A_{\neq}^{(i,c)}}\Pi_{m=1}^{r} f(x_m)}{\sum_{x_{1},...,x_{2k},x_{2k+1}\neq...\neq x_{r}}\Pi_{m=1}^{r} f(x_m)}=O(S^{-2i\tau}).
\end{equation} \newline
The proof for this result is tricky but also follows from the inductive argument.  In the inductive hypothesis, assume for all sets $A_=^{(i-1,c)}\otimes A_{\neq}^{(i-1,c)}$ and corresponding polynomials $p_{(i-1,c)}$ that equation (\ref{eq:algsize}) holds.  Consider what happens when we construct sets of the form  $A_=^{(i,c_*)}\otimes A_{\neq}^{(i,c_*)}$ and the corresponding polynomials $p_{(i,c_*)}$ from $A_=^{(i-1,c)}\otimes A_{\neq}^{(i-1,c)}$.  We know from the above inductive proof that $ p_{(i,c_*)}\leq rp_{(i-1,c)}$.  Furthermore, there is an additional equal sign under the summation with the index  set $A_=^{(i,c_*)}\otimes A_{\neq}^{(i,c_*)}$.  Suppose that $x_m$ is a variable that does not appear in $A_=^{(i-1,c)}\otimes A_{\neq}^{(i-1,c)}$ but does appear in $A_=^{(i,c_*)}\otimes A_{\neq}^{(i,c_*)}$.  We bound the contribution of $f(x_m)$ in $A_=^{(i,c_*)}\otimes A_{\neq}^{(i,c_*)}$ by $d_{max}=O(S^{\frac{1}{2}-\tau})$.  

That is, \begin{equation}\label{eq:algclaim}\frac{p_{(i,c_*)}\sum_{A_=^{(i,c_*)}\otimes A_{\neq}^{(i,c_*)}} \Pi f(x_k)}{p_{(i-1,c)}\sum_{A_=^{(i-1,c)}\otimes A_{\neq}^{(i-1,c)}} \Pi f(x_k)}\leq \frac{d_{max}r\sum_{A_=^{(i,c_*)}\otimes A_{\neq}^{(i,c_*)}} \Pi_{k\neq m} f(x_k)}{\sum_{A_=^{(i-1,c)}\otimes A_{\neq}^{(i-1,c)}} \Pi f(x_k)}. 
\end{equation}    
Now that $f(x_m)$ has been `taken out', we can construct a crude lower bound on the contribution of $f(x_m)$ in $A_=^{(i-1,c)}\otimes A_{\neq}^{(i-1,c)}$, given the hypothesis that $\sum_{x_m} f(x_m) = O(S)$.  So we have from the inequality (\ref{eq:algclaim}) that     

\begin{equation}\label{eq:algclaim22}\frac{p_{(i,c_*)}\sum_{A_=^{(i,c_*)}\otimes A_{\neq}^{(i,c_*)}} \Pi f(x_k)}{p_{(i-1,c)}\sum_{A_=^{(i-1,c)}\otimes A_{\neq}^{(i-1,c)}}\Pi f(x_k)} \leq \frac{O(S^{1-2\tau})\sum_{A_=^{(i,c_*)}\otimes A_{\neq}^{(i,c_*)}} \Pi_{k\neq m} f(x_k)}{O(S)\sum_{A_=^{(i-1,c)}\otimes A_{\neq}^{(i-1,c)}} \Pi_{k\neq m} f(x_k)}. 
\end{equation}

But now that $x_m$ has been effectively removed, the summations over $A_=^{(i-1,c)}\otimes A_{\neq}^{(i-1,c)}$ and $A_=^{(i,c_*)}\otimes A_{\neq}^{(i,c_*)}$ are identical and we conclude from the right hand side of (\ref{eq:algclaim22}) that 

\begin{equation}
\label{eq:algclaim23} \frac{p_{(i,c_*)}\sum_{A_=^{(i,c_*)}\otimes A_{\neq}^{(i,c_*)}}\Pi_{k\neq m}f(x_k)}{p_{(i-1,c)}\sum_{A_=^{(i-1,c)}\otimes A_{\neq}^{(i-1,c)}} \Pi_{k\neq m} f(x_k)} \leq O(S^{-2\tau})
\end{equation}

Invoking the inductive hypothesis that
\begin{equation}\frac{p_{(i-1,c)}\sum_{A_=^{(i-1,c)}\otimes A_{\neq}^{(i-1,c)}}\Pi_{m=1}^{r} f(x_m)}{\sum_{x_{1},...,x_{2k},x_{2k+1}\neq...\neq x_{r}}\Pi_{m=1}^{r} f(x_m)}=O(S^{-2(i-1)\tau}),
\end{equation} 
and  combining this with inequalities (\ref{eq:algclaim22}),(\ref{eq:algclaim23}) completes the proof.
\end{itemize}
\end{proof}

We now prove the desired result that validates the assumptions in Theorems \ref{thm:arborder1} and \ref{thm:arborder2} that enable us to extend our enumeration estimate to arbitrary order.  But before doing so, we would like to remind the reader that throughout this work we have been expressing our approximation $f$ of $\phi$ such that $f = \phi*(1+O(S^{-2k\tau}))$.  This statement implies that  $ f - \phi = \phi O(S^{-2k\tau})$  But since we know by Corollary \ref{cor:ratio} that $\phi(x_i,x_j,\mathbf{d},\mathbf{a}) = \frac{a_i}{a_j}(1+O(S^{-2\tau}))$, it follows that $f - \phi = \frac{a_i}{a_j}O(S^{-2k\tau})$.  In the theorem that follows we often interchange $f = \phi(1+O(S^{-2k\tau})$. $ f - \phi = \phi O(S^{-2k\tau})$ and $f - \phi = \frac{a_i}{a_j}O(S^{-2k\tau})$.

\begin{thm}
\label{thm:assumption}
 Given an approximation $f$ of $\phi$ (the true value for the ratio of the number of graphs that realize two slightly different degree sequences) such that $|f(x_i,x_j,\mathbf{d})-\phi(x_i,x_j,\mathbf{d})|=\frac{a_i}{a_j}O(S^{-2k\tau})$ for some $k<\frac{1}{4\tau}+1$,

\begin{equation}
\label{eq:decomp}
f(x_i,x_j,\mathbf{d},\mathbf{a})=\frac{h(x_i,\mathbf{d},\mathbf{a})}{h(x_j,\mathbf{d},\mathbf{a})}
\end{equation} 
where 
\begin{equation}
\label{eq:h}
h(x_i,\mathbf{d},\mathbf{a})=a_i(1+\sum_{v=1}^{r}\frac{\gamma_va_i^{k_v}\Pi_{q=1}^{s}\beta_q^{m(v,q)}\alpha_q^{n{(v,q)}}}{\alpha_1^{z}})
\end{equation}
for some constants $\gamma_v$,where $r,s,z$ are finite and each term in the summations in the numerator and denominator in (\ref{eq:decomp}) is $O(S^{-2\tau})$.

Furthermore if $\|\mathbf{d}_1-\mathbf{d}_2\|_{\infty}\leq 1$ and $\|\mathbf{d}_1-\mathbf{d}_2\|_{1}=O(S^{\frac{1}{2}-\tau})$, assuming the degrees of node $x_i$ are identical in $\mathbf{d}_1$ and $\mathbf{d}_2$, then $$|h(x_i,\mathbf{d}_1)-h(x_i,\mathbf{d}_2)|=a_i(O(S^{-\frac{1}{2}-3\tau})).$$

\end{thm}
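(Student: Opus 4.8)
The plan is to establish both assertions by a simultaneous induction on the approximation order $k$, where at each order I first confirm the structural form (\ref{eq:h}) for $h$ and then read the perturbation bound off directly from that form. For the base case, Corollary \ref{cor:expand} gives $f=a_i/a_j$, so $h(x_i,\mathbf{d},\mathbf{a})=a_i$ satisfies (\ref{eq:h}) with an empty sum, and Theorem \ref{thm:S4} supplies the first nontrivial term $\gamma_1 a_i(\beta_2-\beta_1)/\beta_1^2$ once the exponential is expanded, confirming the form at the next order.

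\textbf{Inductive step for the form.} Assume $f=h(x_i,\mathbf{d})/h(x_j,\mathbf{d})$ with $h$ of the form (\ref{eq:h}) and error $O(S^{-2k\tau})$. Applying Theorem \ref{thm:arborder1} (or, once the error has reached $O(S^{-1/2})$, Theorem \ref{thm:arborder2}) produces the sharper approximation $g=\frac{a_i}{a_j}\exp(\log(1+\delta_i)-\log(1+\delta_j))=\frac{a_i(1+\delta_i)}{a_j(1+\delta_j)}$, so the updated function is $h_{\mathrm{new}}(x_i)=a_i(1+\delta_i)$. It remains to show $\delta_i$ is a finite sum of terms $\gamma_v a_i^{k_v}\Pi_q\beta_q^{m(v,q)}\alpha_q^{n(v,q)}/\alpha_1^z$, each $O(S^{-2\tau})$. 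Multiplying the numerator and denominator of $\delta_i$ by $\Pi_k b_{u_k}$ removes the dependence of $f$ on the arbitrary indices $u_k$ and turns each factor into $h(x_k,\mathbf{b}_k)$, a polynomial in $b_{x_k}$ whose coefficients are moment ratios by the inductive hypothesis. The sums $\sum\Pi_k h(x_k,\mathbf{b}_k)$ in numerator and denominator then reduce, via Theorems \ref{thm:alg} and \ref{thm:alg2}, to ratios of polynomials in the power sums $\sum_x h(x)^p$; since each such power sum is a polynomial in the moments $\alpha_q,\beta_q$, the quotient is exactly of the claimed moment-rational form, and the prefactor $(a_i-1)$ generates the powers $a_i^{k_v}$. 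Truncating at order $O(S^{-2k\tau})$ is legitimate because $k<\frac{1}{4\tau}+1$ bounds the number of iterations and hence keeps $r,s,z$ finite, which closes the induction for the first assertion.

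\textbf{Perturbation bound.} Granting the form (\ref{eq:h}), the second assertion reduces to estimating how much each moment changes under a perturbation with $\|\mathbf{d}_1-\mathbf{d}_2\|_\infty\le 1$ and $\|\mathbf{d}_1-\mathbf{d}_2\|_1=O(S^{1/2-\tau})$. For $\alpha_1=\sum a_i$ the absolute change is $O(S^{1/2-\tau})$, a relative change of $O(S^{-1/2-\tau})$. For $\beta_q=\sum b_i^q$, each of the $O(S^{1/2-\tau})$ altered entries shifts $b_i^q$ by $O(b_i^{q-1})=O(S^{(q-1)(1/2-\tau)})$, giving an absolute change $O(S^{q(1/2-\tau)})$ against a baseline $\beta_q=O(S^{1+(q-1)(1/2-\tau)})$, again a relative change of $O(S^{-1/2-\tau})$; the same estimate holds for $\alpha_q$. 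Since $r,s,z$ are finite, the relative change of any product $\Pi_q\beta_q^{m}\alpha_q^{n}/\alpha_1^z$ is, to leading order, a sum of finitely many such relative changes and hence $O(S^{-1/2-\tau})$. Because the in-degree $a_i$ of node $x_i$ is identical in $\mathbf{d}_1$ and $\mathbf{d}_2$, it factors out, so each summand of (\ref{eq:h}), being $O(S^{-2\tau})$ in magnitude, changes by $O(S^{-2\tau})\cdot O(S^{-1/2-\tau})=O(S^{-1/2-3\tau})$. Summing the finitely many summands yields $|h(x_i,\mathbf{d}_1)-h(x_i,\mathbf{d}_2)|=a_i\,O(S^{-1/2-3\tau})$, as claimed.

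\textbf{Main obstacle.} The delicate point is the inductive step for the first assertion: justifying that the shifting evaluation sequences $\mathbf{b}_k$ appearing inside $\delta_i$ may be replaced by the fixed base moments without disturbing the leading order. This is precisely where the perturbation bound just derived is invoked, since it is the content of assumption (\ref{eq:assumption}) that Theorem \ref{thm:arborder1} requires. Consequently the two assertions must genuinely be carried in tandem: the moment-stability established at order $k$ is what licenses the passage from order $k$ to order $k+1$ in the structural induction, and I would take care to state the induction so that the stability estimate at a given order is available before invoking the sharpening theorem to reach the next one.
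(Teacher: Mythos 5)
Your proposal is correct and follows essentially the same route as the paper's own proof: induction on $k$, with the inductive step reading the new $h$ off the $\frac{a_i}{a_j}\exp(\log(1+\delta_i)-\log(1+\delta_j))$ form from Theorem \ref{thm:arborder1}, reduction of $\delta_i$ to moment-rational form via Theorems \ref{thm:alg} and \ref{thm:alg2}, and the perturbation bound obtained from $|\beta_q(\mathbf{d}_1)-\beta_q(\mathbf{d}_2)|=O(S^{q(\frac{1}{2}-\tau)})$ against $\beta_q=O(S^{1+(q-1)(\frac{1}{2}-\tau)})$, giving a relative change $O(S^{-\frac{1}{2}-\tau})$ on terms that are already $O(S^{-2\tau})$. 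The only minor imprecision is that to clear the dependence on the reference indices $u_k$ one must multiply through by $\Pi_k h(u_k,\cdot)$ rather than just $\Pi_k b_{u_k}$, as the paper does; this does not affect the argument.
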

\begin{proof}
We prove the result by induction.

The base case holds trivially when $k=1$ as by Corollary \ref{cor:expand},  $f(x_i,x_j,\mathbf{d})=a_i/a_j$ and it also follows trivially that differences in the degree sequence aside from the degrees of nodes $x_i$ or $x_j$ do not affect $f(x_i,x_j,\mathbf{d})$.  Similarly, equation (\ref{eq:decomp}) holds with $r=1$ and $\gamma_1=0$ in the expression (\ref{eq:h}) for both the numerator and denominator of (\ref{eq:decomp}).

So we proceed to the inductive step.

 Given our inductive hypothesis where we have an approximation $f$ where the dependence on the degree sequence in $f$ is sufficiently weak $O(S^{-\frac{1}{2}-\tau})$ and can be dropped as $|f(x_i,x_j,\mathbf{d})-\phi(x_i,x_j,\mathbf{d})|=\frac{a_i}{a_j}(O(S^{-2(k-1)\tau}))$ since $2(k-1)\tau<\frac{1}{2}+2\tau$, we want to show that the sharper approximation produced by Theorem \ref{thm:arborder1} also has the same property.

By applying Theorem \ref{thm:arborder1}, we get a stronger approximation (which we also denote by $f$) which is of the form $\frac{a_i}{a_j}\exp(\log(1+\frac{\|G_{X_{1i}}\|}{\|G_{X_{0i}}\|}))\exp(-\log(1+\frac{\|G_{X_{1j}}\|}{\|G_{X_{0j}}\|}))$.  From equations (\ref{eq:arbderive2})-(\ref{eq:arbderive3}) in Theorem \ref{thm:arborder1}, 
we have 
$$\frac{\|G_{X_{1i}}\|}{\|G_{X_{0i}}\|}= \frac{(a_i-1)\sum_{x_1\neq...\neq x_{a_i-1}=x_{a_i}}\Pi f(x_i,m_i,\mathbf{d},\mathbf{b})}{\sum_{x_1\neq...\neq x_{a_i}}\Pi f(x_i,m_i,\mathbf{d},\mathbf{b})}$$
where the choice of $m_i$ is arbitrary.  By the inductive hypothesis (\ref{eq:decomp}), we can drop the dependence on $m_i$ by multiplying the numerator and denominator by $\Pi_{i=1}^{a_i}b_{m_i}(1+\sum_{v=1}^{r}\frac{\gamma_vb_{m_i}^{k_v}\Pi_{q=1}^{s}\beta_q^{m{(v,q)}}\alpha_q^{n{(v,q)}}}{\alpha_1^{z}})$; that is, we have 
$$\frac{\|G_{X_{1i}}\|}{\|G_{X_{0i}}\|}=\frac{(a_i-1)\sum_{x_1\neq...\neq x_{a_i-1}=x_{a_i}}\Pi f(x_i,\mathbf{d},\mathbf{b})}{\sum_{x_1\neq...\neq x_{a_i}}\Pi f(x_i,\mathbf{d},\mathbf{b})}$$
where 
\begin{equation}
\label{eq:formularatio}
f(x_i,\mathbf{d},\mathbf{b})=b_i(1+\sum_{v=1}^{r}\frac{\gamma_vb_i^{k_v}\Pi_{q=1}^{s}\beta_q^{m(v,q)}\alpha_q^{n{(v,q)}}}{\alpha_1^{z}}).
\end{equation}
For simplicity, we will now drop the explicit dependence on $\mathbf{d}$ and $\mathbf{b}$ from $f$.
Note that by applying Theorems \ref{thm:alg} and \ref{thm:alg2} to the denominator and numerator respectively we have that 

$$\frac{\|G_{X_{1i}}\|}{\|G_{X_{0i}}\|}=\frac{(a_i-1)\sum_{j=0}^{k}\sum_{c=0}^{h(j,k)}p_{(j,c,1)}(a_i)\sum_{A_=^{(j,c,1)}\otimes A_{\neq}^{(j,c,1)}}\Pi f(x_i)}{\sum_{j=0}^{k}\sum_{c=0}^{h(j,k)}p_{(j,c,0)}(a_i)\sum_{A_=^{(j,c,0)}\otimes A_{\neq}^{(j,c,0)}}\Pi f(x_i)}$$
where the $p$'s are polynomials in $a_i$, summations denoted with $A^{(j,c,x)}_=$ are over a finite  number $2k$ of variables.
But by Theorems \ref{thm:alg} and \ref{thm:alg2}, we can drop the terms in the numerator and denominator involving $p_{(k,c,x)}\sum_{A_=^{(k,c,x)}\otimes A_{\neq}^{(k,c,x)}}\Pi f(x_i)$ as they only contribute a maximum of $O(S^{-2k\tau})$, hence

$$\frac{(a_i-1)\sum_{j=0}^{k-1}\sum_{c=0}^{h(j,k)}p_{(j,c,1)}(a_i)\sum_{A_=^{(j,c,1)}\otimes A_{\neq}^{(j,c,1)}}\Pi f(x_i)}{\sum_{j=0}^{k-1}\sum_{c=0}^{h(j,k)}p_{(j,c,0)}(a_i)\sum_{A_=^{(j,c,0)}\otimes A_{\neq}^{(j,c,0)}}\Pi f(x_i)}(1+O(S^{-2k\tau})).$$

(As an abuse of notation, when we write $\sum_{A_=}$, we are summing over all $\mathbf{x}\in\mathbb{N}^{2k}$, but when we consider $A_=$ without the sigma, we are specifying 
which dummy variables must equal one another.)
In addition, by Theorems \ref{thm:alg} and \ref{thm:alg2}, for all $j\leq k-1$, $\cup_{A\in A_=^{(j,c,x)}} A\cap A_{\neq}^{(j,c,x)}=\emptyset$ and $A_{\neq}^{(j,c,x)}=\{2k+1,...,a_i\}$.
Hence we can factor out a $\sum_{x_{2k+1}\neq...\neq x_{a_i}}\Pi_{i=2k+1}^{a_i} f(x_i)$ from both the numerator and denominator.  This yields

$$\frac{\|G_{X_{1i}}\|}{\|G_{X_{0i}}\|}=\frac{(a_i-1)\sum_{j=0}^{k-1}\sum_{c=0}^{h(j,k)}p_{(j,c,1)}(a_i)\sum_{A_=^{(j,c,1)}}\Pi f(x_i)}{\sum_{j=0}^{k-1}\sum_{c=0}^{h(j,k)}p_{(j,c,0)}(a_i)\sum_{A_=^{(j,c,0)}}\Pi f(x_i)}+O(S^{-2k\tau})).$$

Recall that $f(x_i,\mathbf{d},\mathbf{b})=b_i(1+\sum_{v=1}^{r}\frac{\gamma_vb_i^{k_v}\Pi_{q=1}^{s}\beta_q^{m(v,q)}\alpha_q^{n{(v,q)}}}{\alpha_1^{z}})$ where each of the finitely many terms in the summation is $O(S^{-2\tau})$.   Furthermore, note that only $A_=^{(0,0,0)}=\emptyset$, so let us multiply the numerator and denominator by $\frac{1}{\alpha^{2k}}$:  

\begin{equation}\label{eq:magnitude}\frac{\|G_{X_{1i}}\|}{\|G_{X_{0i}}\|}=\frac{(a_i-1)\sum_{j=0}^{k-1}\sum_{c=0}^{h(j,k)}p_{(j,c,1)}(a_i)\sum_{A_=^{(j,c,1)}} \frac{\Pi f(x_i)}{\alpha_1^{2k}}}{\sum_{j=0}^{k-1}\sum_{c=0}^{h(j,k)}p_{(j,c,0)}(a_i)\sum_{A_=^{(j,c,0)}} \frac{\Pi f(x_i)}{\alpha_1^{2k}}}+O(S^{-2k\tau}).
\end{equation}

Note that every term in the numerator $(a_i-1)p_{(j,c,1)}(a_i)\sum_{A_=^{(j,c,1)}}\frac{\Pi f(x_i)}{\alpha_1^{2k}}=O(S^{-2\tau})$ and except for $j=0$ and $c=0$,
$p_{(j,c,0)}(a_i)\sum_{A_=^{j,c,0)}}\frac{\Pi f(x_i)}{\alpha_1^{2k}}=O(S^{-2\tau})$.  And finally, since $A_=^{(0,0,0)}=\emptyset$ and $f(x_i,\mathbf{d},\mathbf{b})=b_i(1+\sum_{v=1}^{r}\frac{\gamma_vb_i^{k_v}\Pi_{q=1}^{s}\beta_q^{m(v,q)}\alpha_q^{n{(v,q)}}}{\alpha_1^{z}})$ where each term in the summation is $O(S^{-2\tau})$and $\sum f(x_i) = S + O(S^{1-2\tau})$, we note that 
$\sum_{A_=^{(0,0,0)}}\frac{\Pi f(x_i)}{\alpha_1^{2k}}=1+O(S^{-2\tau})$ as $\alpha_1 = S$.  We now show that the dependence in equation (\ref{eq:magnitude}) on the degree sequence is `small'.  

As noted before, $(a_i-1)^{\delta_{x,1}}p_{(j,c,x)}(a_i)\sum_{A_=^{(j,c,x)}} \frac{\Pi f(x_i)}{\alpha_1^{2k}}=\delta_{x,0}+O(S^{-2\tau})$ where $\delta_{x,y}=1$ if $x=y$ and $0$ otherwise and the $O(S^{-2\tau})$ term is some finite sum of terms of the form $\gamma a_i^{j}\frac{\Pi_{k=1}^{m}\alpha_k^{g(k)}\beta_k^{h(k)}}{\alpha_1^{2k}}$ each of which are $O(S^{-2\tau})$. 

The constraint that $\gamma a_i^{j}\frac{\Pi_{k=1}^{m}\alpha_k^{g(k)}\beta_k^{h(k)}}{\alpha_1^{2k}}=O(S^{-2\tau})$ holds for all degree sequences such that $d_{max}=O(S^{\frac{1}{2}-\tau})$.
Now we consider the perturbation analysis, such that $\| \mathbf{d}_1 -\mathbf{d}_0\|_1=O(S^{\frac{1}{2}-\tau})$ and $\|\mathbf{d}_1-\mathbf{d}_0\|_{\infty}\leq 1$, then $|\beta_j(\mathbf{d}_1)-\beta_j(\mathbf{d}_0)|=O(d_{max}^{j})=O(S^{\frac{j}{2}-j\tau})$.  But also note that $\max(\alpha_j,\beta_j)=O(S^{1+\frac{j-1}{2}-(j-1)\tau})$ as $\beta_j=\sum_i b_i^{j}\leq d_{max}^{j-1}\sum_i b_i=O(S^{1+\frac{j-1}{2}-(j-1)\tau})$.  Now if we measure the impact of considering different degree sequences $\mathbf{d}_0,\mathbf{d}_1$ on $\gamma a_i^{j}\frac{\Pi_{k=1}^{m}\alpha_k^{g(k)}\beta_k^{h(k)}}{\alpha_1^{2k}}$,this results in a $O(S^{-\frac{1}{2}-\tau})$ times smaller than $\gamma a_i^{j}\frac{\Pi_{k=1}^{m}\alpha_k^{g(k)}\beta_k^{h(k)}}{\alpha_1^{2k}}$.  But since $\gamma a_i^{j}\frac{\Pi_{k=1}^{m}\alpha_k^{g(k)}\beta_k^{h(k)}}{\alpha_1^{2k}}=O(S^{-2\tau})$, the contribution in equation (\ref{eq:magnitude}) from considering different degree sequences is $O(S^{-\frac{1}{2}-\tau-2\tau})$.

Now to verify that our new higher order approximation $f(x_i,x_j,\mathbf{d},\mathbf{a})=\frac{a_i}{a_j} \left( \frac{1+\sum_{v=1}^{r}\frac{\gamma_va_i^{k_v}\Pi_{q=1}^{s}\beta_q^{m(v,q)}\alpha_q^{n{(v,q)}}}{\alpha_1^{z}}}{1+\sum_{v=1}^{r}\frac{\gamma_va_j^{k_v}\Pi_{q=1}^{s}\beta_q^{m{(v,q)}}\alpha_q^{n{(v,q)}}}{\alpha_1^{z}}} \right)$, we merely perform a Taylor expansion in the denominator of
$$\frac{(a_i-1)\sum_{j=0}^{k-1}\sum_{c=0}^{h(j,k)}p_{(j,c,1)}(a_i)\sum_{A_=^{(j,c,1)}} \frac{\Pi f(x_i)}{\alpha_1^{2k}}}{\sum_{j=0}^{k-1}\sum_{c=0}^{h(j,k)}p_{(j,c,0)}(a_i)\sum_{A_=^{(j,c,0)}} \frac{\Pi f(x_i)}{\alpha_1^{2k}}}+O(S^{-2k\tau})$$ as the denominator can be rewritten in the form $1+O(S^{-2\tau})$ where the $O(S^{-2\tau})$ term is some finite sum of terms of the form $\gamma a_i^{j}\frac{\Pi_{k=1}^{m}\alpha_k^{g(k)}\beta_k^{h(k)}}{\alpha_1^{2k}}$ each of which are $O(S^{-2\tau})$ and each term in the numerator is $O(S^{-2\tau})$.  Hence we get that $$1+\frac{(a_i-1)\sum_{j=0}^{k-1}\sum_{c=0}^{h(j,k)}p_{(j,c,1)}(a_i)\sum_{A_=^{(j,c,1)}} \frac{\Pi f(x_i)}{\alpha_1^{2k}}}{\sum_{j=0}^{k-1}\sum_{c=0}^{h(j,k)}p_{(j,c,0)}(a_i)\sum_{A_=^{(j,c,0)}} \frac{\Pi f(x_i)}{\alpha_1^{2k}}}+O(S^{-2k\tau})=(1+\sum_{v=1}^{r_*}\frac{\gamma_va_i^{k_v}\Pi_{q=1}^{s}\beta_q^{m(v,q)}}{\alpha_1^{2k}})$$ for some finite $r_*$.

Repeating the same argument for evaluating the $\exp(-\log(1+\frac{\|G_{X1j}\|}{\|G_{X0j}\|}))$ term yields the desired result, that $f(x_i,x_j,\mathbf{d},\mathbf{a})=\frac{a_i}{a_j}\left(\frac{1+\sum_{v=1}^{r}\frac{\gamma_va_i^{k_v}\Pi_{q=1}^{s}\beta_q^{m(v,q)}\alpha_q^{n{(v,q)}}}{\alpha_1^{z}}}{1+\sum_{v=1}^{r}\frac{\gamma_va_j^{k_v}\Pi_{q=1}^{s}\beta_q^{m{(v,q)}}\alpha_q^{n{(v,q)}}}{\alpha_1^{z}}}\right)$.

\end{proof}

\section{Discussion}

In this work, we have established new results that allow for enumeration, up to arbitrary accuracy, of the graphs that satsify a fixed bidegree sequence.  These results are proven to hold asymptotically, as the number of nodes in the graphs goes to infinity, under the assumption that as the limit is taken, the degrees of all nodes remain below  the square root of the number of edges, $S$, in the graph.
The extension to the square root represents an advance over previous results of a similar spirit, which only allow for degrees that are smaller powers of the number of edges present \cite{McKay84,McKay91,Greenhill06}.   Clearly, the increase from $S^{1/3}$ to $S^{1/2}$ can become quite substantial in large graphs and will enhance the relevance of the results correspondingly.  Interestingly, we have recently established a convenient condition such that there is certain to be a graph that realizes each bidegree sequence that satisfies it (i.e., a graphicality condition), which is also based on a square root bound on the maximum degree \cite{B15}.  Indeed, as a sequence approaches the edge of graphicality, there will typically be fewer graphs that realize it \cite{Berger14}, and hence the types of arguments in the present paper, which are based on the presence of large collections of graphs, cannot be applied.    While other approaches do not require such sparsity conditions, they circumvent related obstacles by assuming sufficient homogeneity among nodes' degrees \cite{Barvinok,Canfield08}, whereas our results require no such restriction.

Although our findings are rigorously valid in an asymptotic sense, they are likely to be quite relevant in the generation of the large graphs that arise in many application areas \cite{Pomerance09,Zhao11}.
In practice, once a graphic bidegree sequence is specified, various algorithms can be used to instantiate one or more graphs that realize this sequence.  If the properties of a graph are to be studied, it is desirable not just to build such a graph or a collection of such graphs but to do so in an unbiased way.  Computational graph-generation algorithms  often start from a potentially biased sample of graphs and then repeatedly invoke some procedure, such as switching which nodes are connected by certain edges, to try to erase any initial bias.
Enumeration results such as ours can be used to provide estimates for the number of iterations or the size of samples that will be needed for these approaches to succeed.  In fact, many of our results concern enumeration of the ratio of the numbers of graphs compatible with two different bidegree sequences.  These types of results may be particularly useful for computational applications, since they allow for the calculation of the fraction of graphs that will contain a particular edge set or other features, which can then be compared to the corresponding fraction obtained computationally.  Our results generally pertain to sequences that are close to each other in some sense but, as we have done in some of our proofs, ratios for more disparate sequences can be obtained by taking  a series of small steps to gradually transform one of the sequences to the other.

Our findings can be extended to graphs with specific properties, such as graphs that exclude certain collections or types of edges, as briefly discussed in Appendix B.    On the other hand, our approach makes significant use of the fact that in large, sparse graphs, nodes rarely share common neighbors or collections of neighbors.  If we try to extend our ideas to smaller networks, then we may run into trouble as the numbers of potential targets for edges is reduced and shared neighbors become more important.   For sufficiently small degree sequences, the numbers of corresponding graphs could be counted by direct brute force methods \cite{Miller13}.  As sequences become large enough, however, such approaches would become more expensive.  A future computational direction would be to explore how large a degree sequence must be in order for asymptotic results such as ours to provide accurate enumeration estimates.   If there is a gap between sequence sizes where direct computational counting of graphs is practical and sizes where the application of  asymptotic results is useful, then alternative methods would need to be used for graph enumeration for sequences that fall into this gap.  

\section{Acknowledgements}
This work was partially supported by National Science Foundation Award DMS-1312508.  Any opinion, findings, and conclusions or recommendations expressed in this material are those of the authors(s) and do not necessarily reflect the views of the NSF.

\section{Appendix A}
\label{section:appendix1}
In this section, we  briefly mention a generalization of the  sparsity assumptions that will also yield a power series expansion for  the number of graphs realizing a degree sequence,  using the techniques of this paper.  

Specifically, up to this point, we have assumed that $d_{max}=O(S^{\frac{1}{2}-\tau})$.  Denote $a^{(k)}$ ($b^{(k)}$) as the $k$th entry in an in-degree (out-degree) sequence derived by sorting \textbf{a} and \textbf{b} into non-increasing order.  The key observation is that if  
\begin{equation}\tag{A.1}\label{eq:condition} \max(\sum_{k=1}^{a^{(1)}}b^{(k)},\sum_{k=1}^{b^{(1)}}a^{(k)})=O(S^{1-\tau}),\end{equation} 
then the appropriate extensions of Corollary 4 and Lemma 2 still hold and we can derive the corresponding power expansions.  
Naturally, for example, condition (\ref{eq:condition}) holds if $a^{(1)}b^{(1)}=O(S^{1-\tau})$ or if more simply $d_{max}=O(S^{\frac{1}{2}-\tau})$.

Recall the proof of Theorem 2, where we count the number of common neighbors that receive an outward (inward) edge from both an arbitrary node $x$ and a node of bounded degree $y$.  In the worst case scenario, (\ref{eq:condition}) gives the number of outgoing edges from all of the neighbors of $x$.  Since there are $S$ edges in the graph, it is intuitive that it would be difficult for $x$ and $y$, which has bounded degree, to share a common neighbor.
This idea  forms the foundation for the appropriate extensions of Theorem 2 and Corollaries 3 and 4.

Similarly, with care, we can extend Lemma 2 as follows:

\begin{lem}
 
Suppose that $ f,g : I:=\{1,2,...,N\} \to [1,\infty)$ and for simplicity let $g(\cdot)\leq f(\cdot)$. 
 Let $\{x_{1},...,x_{r}\}$ be distinct inputs  from $I$ that yield the largest $r$ outputs for $f(\cdot)$ and assume that $\sum_{i=1}^{r} f(x_{i})=O(S^{1-\tau})$, $\sum_{i=1}^{N}f(i)=O(S)$. Furthermore, let $k$ be an $O(1)$ natural number and  let $c_1, \ldots, c_r$ be a sequence of natural numbers. 
Then $$\sum_{c_{1}\neq...\neq c_{r}}^{N}g(c_{1})\Pi_{i=2}^{r} f(c_{i})=\sum_{\substack{c_{1},..,c_{k},\\c_{k+1}\neq...\neq c_{r}}}^{N}g(c_{1})\Pi_{i=2}^{r} f(c_{i})(1+ O(S^{-\tau})).$$
\end{lem}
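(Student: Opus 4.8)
The plan is to run the same induction on the fixed number $k$ as in the proof of Lemma \ref{lem:count}, peeling the constraints off one index at a time. The only step that does not survive verbatim is the bound on the ``collision'' terms: in Lemma \ref{lem:count} a collision is controlled by $(r-1)\max f(\cdot)=O(S^{1-2\tau})$, but here neither $r$ nor $\max f(\cdot)$ is $O(S^{\frac{1}{2}-\tau})$, and their product can be as large as $S^{2-2\tau}$. The hard part is therefore to replace this estimate by one driven directly by the hypothesis $\sum_{i=1}^r f(x_i)=O(S^{1-\tau})$, i.e.\ by the sum of the $r$ largest outputs of $f$.

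For the base case $k=1$ I would write $\sum_{c_1\neq\ldots\neq c_r}g(c_1)\Pi_{i=2}^r f(c_i)=M-E$, where $M=\sum_{c_1,\,c_2\neq\ldots\neq c_r}g(c_1)\Pi_{i=2}^r f(c_i)$ frees $c_1$ from the inequalities and $E$ collects the removed terms in which $c_1$ equals one of $c_2,\ldots,c_r$. Setting $c_1=c_j$ merely reuses the factor $f(c_j)$ already present in $\Pi_{i=2}^r f(c_i)$, so the removed term is $g(c_j)\Pi_{i=2}^r f(c_i)$ and
\[
E=\sum_{c_2\neq\ldots\neq c_r}\Big(\sum_{j=2}^r g(c_j)\Big)\Pi_{i=2}^r f(c_i),\qquad M=\Big(\sum_{c_1=1}^N g(c_1)\Big)\sum_{c_2\neq\ldots\neq c_r}\Pi_{i=2}^r f(c_i).
\]
Since $E$ and $M$ share the factor $\sum_{c_2\neq\ldots\neq c_r}\Pi_{i=2}^r f(c_i)$, their ratio is controlled by $\sum_{j=2}^r g(c_j)$ against $\sum_{c_1}g(c_1)$. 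As $g\leq f$ and $c_2,\ldots,c_r$ are distinct, $\sum_{j=2}^r g(c_j)\leq\sum_{j=2}^r f(c_j)\leq\sum_{i=1}^r f(x_i)=O(S^{1-\tau})$, because the sum of $f$ over any $r-1$ distinct indices is at most the sum over the $r$ largest; meanwhile $\sum_{c_1}g(c_1)=\Theta(S)$. Hence $E=O(S^{-\tau})M$, establishing the case $k=1$.

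For the inductive step I would assume the identity for some $k$ and free the next index $c_{k+1}$: the difference between the freed-$k$ sum and the freed-$(k+1)$ sum is a collision term in which $c_{k+1}$ is set equal to one of $c_{k+2},\ldots,c_r$. The same manipulation applies, since the collision reuses an existing $f$ factor, so the count of collision positions is converted into a restricted sum of $g$ rather than an extra power of $r$, and this restricted sum is again bounded by $\sum_{i=1}^r f(x_i)=O(S^{1-\tau})$. Thus each peeling contributes only a multiplicative $(1+O(S^{-\tau}))$ factor, and since $k=O(1)$ the cumulative error stays $O(S^{-\tau})$.

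The one subtlety I would flag explicitly is the lower bound $\sum_{c_1}g(c_1)=\Theta(S)$ used for the main term; it is here, rather than in the collision estimate, that the normalization $\sum_{i=1}^N f(i)=\Theta(S)$ enters, together with the fact that in the intended applications $g$ is of the same order as $f$ (indeed $g$ differs from $f$ by a bounded amount). Granting this, the argument is exactly the generalization of Lemma \ref{lem:count}: the top-$r$-sum hypothesis plays the role previously played jointly by $\max f(\cdot)=O(S^{\frac{1}{2}-\tau})$ and $r=O(S^{\frac{1}{2}-\tau})$, and the error weakens from $O(S^{-2\tau})$ to $O(S^{-\tau})$ accordingly.
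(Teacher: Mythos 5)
The paper omits its own proof of this lemma (``We omit the details of the proofs of these results''), so there is nothing to compare against line by line. Your induction is the natural and surely intended generalization of Lemma \ref{lem:count}: the identification of the collision term as $E=\sum_{j=2}^{r}\sum_{c_2\neq\cdots\neq c_r}g(c_j)\Pi_{i=2}^{r}f(c_i)$, the bound $\sum_{j=2}^{r}g(c_j)\leq\sum_{j=2}^{r}f(c_j)\leq\sum_{i=1}^{r}f(x_i)=O(S^{1-\tau})$ via distinctness of $c_2,\ldots,c_r$, and the inductive steps that free the $f$-carrying variables $c_2,\ldots,c_k$ (which need only $\sum_i f(i)=\Theta(S)$) are all correct.

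The step you flag, however, is a genuine gap and not a formality: the estimate $E/M\leq O(S^{1-\tau})/\sum_c g(c)$ requires $\sum_c g(c)=\Omega(S)$, and the stated hypotheses ($g\geq 1$, $g\leq f$) give only $\sum_c g(c)\geq N$, with no control of $N$ against $S$. In fact the lemma is false as written. Take $\tau=\tfrac14$, $N=S^{5/8}$, $r=S^{3/8}$, $f(1)=g(1)=S^{3/4}$, and $f(c)=S^{3/8}$, $g(c)=1$ for $c\geq 2$; then $g\leq f$, both map into $[1,\infty)$, $\sum_{i=1}^{r}f(x_i)=O(S^{3/4})$ and $\sum_i f(i)=\Theta(S)$, yet for $k=1$, writing $A=S^{3/4}$, $a=S^{3/8}$ and $P(n,m)=n!/(n-m)!$, one computes
\[
\frac{\sum_{c_1\neq\cdots\neq c_r}g(c_1)\Pi_{i=2}^{r}f(c_i)}{\sum_{c_1,\,c_2\neq\cdots\neq c_r}g(c_1)\Pi_{i=2}^{r}f(c_i)}
=\frac{(N-r+1)\bigl[(r-1)A+Aa+(N-r)a\bigr]}{(A+N-1)\bigl[(r-1)A+(N-r+1)a\bigr]}=\Theta\!\left(\frac{N}{A}\right)=\Theta(S^{-1/8}),
\]
so the two sides differ by an unbounded multiplicative factor rather than $1+O(S^{-\tau})$. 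The mechanism is exactly the collision your bound isolates: the relaxed sum is dominated by configurations in which $c_1=1$ contributes $g(1)=A$ while index $1$ simultaneously occupies one of the positions $c_2,\ldots,c_r$ and contributes $f(1)=A$ to the product, and these are precisely the terms excluded on the left. So your proof is the right one, but it proves a corrected statement that carries the additional hypothesis $\sum_c g(c)=\Theta(S)$ (or some equivalent closeness of $g$ to $f$ in $\ell_1$); the lemma cannot be established from the hypotheses as printed, and you should state the extra assumption explicitly rather than merely ``granting'' it.
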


We omit the details of the proofs of these results.

\section{Appendix B}
\label{section:appendix2}
In this section, we discuss how to generalize results from previous sections to directed graphs without loops and to undirected graphs (with and without loops); similar ideas apply to graphs with other sets of prohibited edges besides loops.

\subsection{Directed Graphs without Loops}
As one may expect, when considering directed graphs without loops, the results regarding asymptotic enumeration for directed graphs with loops carry over as the likelihood that a fixed node has an edge to itself is small.  More specifically,

\begin{lem} 
\label{lem:appB}
Consider a degree sequence $\mathbf{d}=(\mathbf{a},\mathbf{b})\in \mathbb{Z}^{N\times 2}$ where
\[
\mathbf{a}=\{a_1,a_2,0,\ldots,0\}, \mathbf{b}=\{\delta,\delta,2,...,2,1,..,1,0,..,0\} \; \; \mbox{and} \; \; \sum_{i=1}^{N}a_{i}=\sum_{i=1}^{k}b_{i}=:S, 
\]
with $\delta$  either $0$ or $1$ and with 2 appearing $q$ times in $\mathbf{b}$.
If $a_1, a_2 \geq q+\delta$, then there are $\binom{a_1+a_2-2\delta-2q}{a_1-\delta-q}$ directed graphs without loops that realize the above degree sequence.
\end{lem}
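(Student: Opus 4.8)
The plan is to show that nearly every edge in any realizing graph is forced, so that the only combinatorial freedom lies in how the out-degree-$1$ nodes distribute their single edges between nodes $1$ and $2$. First I would note that since the in-degree sequence is $\mathbf{a}=\{a_1,a_2,0,\ldots,0\}$, every edge of any realizing graph must terminate at node $1$ or node $2$; all other nodes have in-degree $0$ and receive nothing. This immediately restricts where each outgoing edge can go.

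Next I would dispose of the forced edges. The $q$ nodes of out-degree $2$ (the ``double'' nodes) must send both of their edges into $\{1,2\}$, but since we are counting simple graphs (at most one edge per ordered pair) and these nodes are distinct from $1,2$, each double node is compelled to send exactly one edge to node $1$ and one to node $2$: this is the unique option, and it contributes $q$ to each of $a_1$ and $a_2$. When $\delta=1$, nodes $1$ and $2$ each have one outgoing edge that must also land in $\{1,2\}$; the loop-free constraint forbids $1\to 1$ and $2\to 2$, so node $1$ must send its edge to node $2$ and node $2$ to node $1$, again uniquely, contributing $\delta$ to each in-degree.

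Then I would count the remaining freedom. After the forced edges, node $1$ still needs $a_1-\delta-q$ incoming edges and node $2$ needs $a_2-\delta-q$; the hypothesis $a_1,a_2\geq q+\delta$ guarantees these are non-negative (and if either fails, no realization exists). These residual edges must come from the out-degree-$1$ nodes, of which there are exactly $s=S-2\delta-2q=a_1+a_2-2\delta-2q$ by edge-count bookkeeping ($S=2\delta+2q+s$, using $S=a_1+a_2$). Each such node independently attaches to node $1$ or node $2$, and a realization is determined precisely by which of these $s$ distinguishable nodes attach to node $1$. Hence the number of realizations is $\binom{s}{a_1-\delta-q}=\binom{a_1+a_2-2\delta-2q}{a_1-\delta-q}$, as claimed.

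The main obstacle, though a mild one, is justifying that the double nodes are genuinely forced to split their two edges rather than having any choice: this is exactly the point where the no-loops-but-simple-graph structure is used, and it is the feature distinguishing this computation from the with-loops count of Lemma \ref{lem:prelim}. Once that rigidity is established the remainder is a direct binomial count, and I would finish by verifying the consistency $\sum_i b_i=2\delta+2q+s=a_1+a_2=\sum_i a_i=S$ to confirm the degree sequence is admissible.
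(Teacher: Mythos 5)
Your proof is correct. The paper states Lemma \ref{lem:appB} without proof, so there is no argument to compare against directly; your reasoning --- every edge must terminate at node $1$ or node $2$, the $q$ out-degree-$2$ nodes are forced by simplicity to send one edge to each, the $\delta=1$ case forces the mutual edge between nodes $1$ and $2$ by the no-loop constraint, and the remaining $a_1+a_2-2\delta-2q$ out-degree-$1$ nodes each freely choose a target, giving $\binom{a_1+a_2-2\delta-2q}{a_1-\delta-q}$ --- is the natural one and is consistent in spirit with the paper's proof of Lemma \ref{lem:prelim}. The bookkeeping identities and the role of the hypothesis $a_1,a_2\geq q+\delta$ in keeping the residual in-degrees non-negative are all handled correctly.
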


The above Lemma allows for the appropriate generalization of Theorem \ref{thm:partition}, where we construct sets of residual degree sequences $X_{k,\delta_{1},\delta_{2}}$ for $k$ the number of common neighbors being considered.  \textcolor{black}{Here, $\delta_1=1$ if node 1 connects to node 2 and is otherwise 0; similarly, $\delta_2=1$ if node 2 connects to node 1 and is otherwise 0.}
   Since it should be unlikely asymptotically that either $\delta_{1}$ or $\delta_{2}$ is positive, it follows that $\|G_{X_{0,0,0}}\|$ becomes the dominating term and the analysis proceeds as in other cases.  (We can make this rigorous by using switching arguments as we did in Section 3.)  This will provide us with arbitrarily accurate asymptotics for the ratio of the number of directed graphs (without loops) with degree sequences that are distance 2 apart.  
   
Suppose we knew exactly the number of graphs of one degree sequence that summed to $S$.  Then we could multiply this quantity by a product of (fractional) terms, where each fractional term is a ratio of the number of directed graphs without loops, distance 2 apart.  Since from above we have asymptotics for the ratio of the number of graphs distance two apart, we only need one case where we can count the number of graphs of a given degree sequence exactly to derive the general formula. For simplicity, we can consider a degree sequence where all realizations can never have loops.  For instance, we can arbitrarily extend our degree sequence to an analogous degree sequence in $\mathbb{Z}^{2N\times 2}$ and we can consider the degree sequence $(\{a_{1},....,a_{N},0,....,0\},\{0,...,0,b_{N+1},...,b_{2N})\}\in\mathbb{Z}^{2N\times 2}$. 
If $b_{N+1},...,b_{2N}$ consists of only zeros and ones, then   Corollary \ref{cor:count} applies and all realizations are directed graphs without loops.

\subsection{Undirected Graphs}
Practically speaking, the primary difference between manipulating degree sequences for undirected graphs and degree sequences for directed graphs is that for directed graphs,  the in-degree sequence can be manipulated without impacting the out-degree sequence.  In contrast, with undirected graphs, the corresponding adjacency matrix is symmetric.  As such, while the same thematic ideas also follow through to undirected graphs, care should be taken.  More specifically, instead of partitioning two rows (or two columns) separately, as we were able to do in the directed case, we must partition the two rows and the two columns together.  \textcolor{black}{In the remainder of this section, we will present results in terms of undirected graphs without loops, noting that the techniques carry over to the case where loops are allowed.}

For the undirected case, we must consider two distinct cases.  
In the first case, the two partitioned nodes do not share an edge together.  Consequently, if the two nodes have $a_{1}+a_{2}$ edges, \textcolor{black}{then these edges must also show up in the degrees of $a_{1}+a_{2}$ other nodes in the graph.}

\begin{lem}
\label{lem:appB2}
Consider a degree sequence $\mathbf{a} \in \mathbb{Z}^{N}$ for an undirected graph without loops, given by  
\[
\mathbf{a}=\{a_1,a_2,2,...,2,1,...,1,0,...,0\} 
\]
where  the first two entires of $\mathbf{a}$ are followed by $q_{2}$ $2's$ and $q_{1}$ $1's$ such that $q_{1}+2q_{2}=a_{1}+a_{2}$.
Then there are $\binom{a_1+a_2-2q_2}{a_1-q_2}$ undirected graphs without loops that realize the above degree sequence where nodes 1 and 2 are not neighbors.
\end{lem}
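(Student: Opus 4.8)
The plan is to follow the same forced-structure argument used in Lemma~\ref{lem:prelim}, exploiting the constraint $q_1+2q_2=a_1+a_2$ to pin the graph down almost completely. First I would observe that the sum of all degrees is $a_1+a_2+(q_1+2q_2)=2(a_1+a_2)$, so every realization has exactly $a_1+a_2$ edges. Since nodes $1$ and $2$ are required to be non-adjacent, the $a_1$ edges at node $1$ and the $a_2$ edges at node $2$ are all distinct and all run to the remaining nodes; these already account for every one of the $a_1+a_2$ edges. Hence no edge can join two of the remaining nodes: every edge of the graph is incident to node $1$ or to node $2$.

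With this observation in hand, the structure is essentially forced. A remaining node of degree $2$ has both of its edges going into $\{1,2\}$, and since the graph is simple (no multi-edges, no loops) it must send exactly one edge to node $1$ and one to node $2$; thus each of the $q_2$ degree-$2$ nodes is adjacent to both node $1$ and node $2$. A remaining node of degree $1$ sends its single edge to exactly one of node $1$ or node $2$, and the degree-$0$ nodes are isolated. The only freedom left in the realization therefore lies in how the $q_1$ degree-$1$ nodes are split between node $1$ and node $2$.

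To finish, I would count this freedom. After the degree-$2$ nodes are attached, node $1$ still needs $a_1-q_2$ further edges and node $2$ needs $a_2-q_2$, and these must be supplied by the degree-$1$ nodes. Choosing which $a_1-q_2$ of the $q_1$ degree-$1$ nodes attach to node $1$ (the rest attaching to node $2$) yields $\binom{q_1}{a_1-q_2}$ realizations; substituting $q_1=a_1+a_2-2q_2$ rewrites this as $\binom{a_1+a_2-2q_2}{a_1-q_2}$, as claimed. The consistency check $(a_1-q_2)+(a_2-q_2)=q_1$ confirms the split is achievable exactly when $a_1,a_2\ge q_2$, while for $a_1<q_2$ (or $a_2<q_2$) the forced attachment of the degree-$2$ nodes is already impossible and the binomial coefficient correctly evaluates to $0$.

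The argument is elementary once the key reduction is made, so I do not anticipate a serious obstacle. The one point demanding care is the opening step: arguing rigorously that the degree budget $q_1+2q_2=a_1+a_2$, together with the non-adjacency of nodes $1$ and $2$, forces the complete absence of edges among the remaining nodes. Everything after that is a direct, Lemma~\ref{lem:prelim}-style enumeration.
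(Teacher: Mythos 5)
Your proof is correct. The paper does not actually supply a proof of this lemma (Appendix B states it without one, pointing back to the constructive counting of Lemma~\ref{lem:prelim}), and your argument --- using the edge count $a_1+a_2$ together with non-adjacency of nodes $1$ and $2$ to force every edge to be incident to node $1$ or node $2$, then splitting the degree-$1$ nodes --- is exactly the intended Lemma~\ref{lem:prelim}-style enumeration, with the degenerate cases $a_1<q_2$ or $a_2<q_2$ handled correctly by the vanishing binomial coefficient.
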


Alternatively, if nodes $1$ and $2$ are neighbors then they make $a_{1}+a_{2}-2$ edges with other nodes in the graph.  We need to consider this case as well and the result is analogous to Lemma \ref{lem:appB2}.  (If we were considering undirected graphs with loops, we would also need to consider the case where there are self-loops too.)  Then as before, we construct sets $X_{k,\delta}$, where $k$ denotes the number of common neighbors \textcolor{black}{of nodes 1 and 2} and $\delta$ denotes whether nodes $1$ and $2$ are linked to each other.  Again, $\|G_{X_{0,0}}\|$ dominates and the analogous results hold. 
To switch from ratios to counts for a particular degree sequence, the following result, which is analogous to Corollary 6, is fundamental.  
\begin{lem}
For a degree sequence $\mathbf{a} \in \mathbb{Z}^{N}$ for an undirected graph without loops  given by $\mathbf{a}=\{1,...,1,0,...,0\}$ with $\sum a_i = S$, 
 $$\|G_{\mathbf{a}}\|=\frac{S!}{2^{\frac{S}{2}}(\frac{S}{2}!)}.$$
\end{lem}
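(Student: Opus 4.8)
The plan is to recognize that the only undirected loopless graphs realizing this degree sequence are the perfect matchings on the $S$ nodes of degree $1$, and then to count such matchings by a sequential argument in the spirit of Lemma \ref{lem:prelim} and Corollary \ref{cor:count}. First I would observe that the $N-S$ nodes of degree $0$ are isolated and contribute nothing, so any realizing graph is determined entirely by its restriction to the $S$ nodes of degree $1$. Since each such node has degree exactly $1$ and loops are forbidden, every edge must join two \emph{distinct} degree-one nodes, and each node is incident to precisely one edge. Hence the edge set is a perfect matching on the $S$ labeled vertices, which in particular forces $S$ to be even (otherwise there is no realization).

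Next I would count these matchings sequentially. I order the degree-one nodes and repeatedly process the lowest-indexed node that is not yet matched: the first node may be paired with any of the remaining $S-1$ nodes, after which both are removed from consideration; the next unmatched node may be paired with any of the remaining $S-3$ nodes; and so on. This yields
\[
\|G_{\mathbf{a}}\| = (S-1)(S-3)\cdots 3\cdot 1 = (S-1)!!.
\]
Because we always pair off the smallest remaining index, each matching is produced exactly once, so no correction for overcounting is required.

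Finally I would convert the double factorial to the stated closed form, using $S! = (S-1)!!\cdot S!!$ together with $S!! = S(S-2)\cdots 2 = 2^{S/2}(S/2)!$, to obtain
\[
(S-1)!! = \frac{S!}{S!!} = \frac{S!}{2^{S/2}\,(S/2)!},
\]
which is precisely the claimed expression.

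The computation is entirely routine, so there is no serious obstacle; the only point that warrants a word of care is the bijection between realizing graphs and perfect matchings, namely verifying that the loopless and simple-edge constraints genuinely force each edge to pair two distinct degree-one nodes (ruling out self-loops), together with the parity observation that $S$ must be even. An equivalent alternative I could present in place of the sequential count is the standard ``arrange in a line, pair consecutively, divide by symmetries'' argument: dividing $S!$ by $2^{S/2}$ for the order within each pair and by $(S/2)!$ for the order among the pairs produces $\frac{S!}{2^{S/2}(S/2)!}$ directly.
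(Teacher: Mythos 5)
Your proposal is correct and takes essentially the same approach as the paper: both arguments reduce the problem to counting perfect matchings on the $S$ degree-one vertices by an elementary sequential count. The only cosmetic difference is that you pair off the lowest-indexed unmatched vertex to get $(S-1)!!$ directly with no overcounting, whereas the paper counts ordered sequences of pairs ($\binom{S}{2}\binom{S-2}{2}\cdots$) and then divides by $(\frac{S}{2})!$ --- your closing ``alternative'' is exactly the paper's version, and both yield $\frac{S!}{2^{S/2}(S/2)!}$.
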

\begin{proof}
The proof is constructive. Note that $S$ must be even as otherwise there are no graphs. For the first edge, there are $\binom{S}{2}$ possible choices of pairs of nodes to connect.  After we decide the initial pair to wire up, then there are $\binom{S-2}{2}$ possible choices to form the second edge.  This  reasoning implies that there are  $\Pi_{k=0}^{\frac{S}{2}-1}\binom{S-2k}{2}=\frac{S!}{2^{S/2}}$ choices for wirings.  In this procedure, however, suppose that all of the choices are the same except for the first two steps.  In the first step in one example, node 1 wires with node 2 and node 3 wires with node 4.  Alternatively in the other example, node 3 wires up with node 4 in the first step, but node 1 wires up with node 2 in the second step.  The output is the same graph, but we counted both of these instances as two distinct events (graphs).  We readily note that there are $\frac{S}{2}!$ possible ways of wiring up the same graph with this procedure, as there are $\frac{S}{2}$ edge pairs in the graph. 
\end{proof}
The technique for counting the number of undirected graphs with loops for the degree sequence $\mathbf{a}=\{1,...,1,0,...,0\}$ is similar and left as an exercise to the reader.   At this juncture, we note that the idea of prohibiting loops is a special case of prohibiting edges between (two) nodes.   Though it is outside the scope of this work, we strongly expect that the ideas can  be  extended to attain asymptotics for the ratio of the number of graphs of two distinct degree sequences distance 2 apart where we prohibit edges between certain nodes.  As long as generating a prohibited edge is unlikely for any node in the graph, the ideas of our work should carry over;  that is, the dominating term in the ratio will consist of realizations of graphs where the two partitioned nodes do not share a common neighbor and no prohibited edges appear.  

\section{Appendix C: Proof of Theorem \ref{thm:arborder2}}
\label{section:appendix3}
In this section, we provide a proof of Theorem \ref{thm:arborder2}.  From Theorem \ref{thm:arborder1}, we have an approximation $f$ such that $\frac{\|G_{\mathbf{d_{-i}}}\|}{\|G_{\mathbf{d_{-j}}}\|}=f(\mathbf{e}_{i},\mathbf{e}_{j},\mathbf{d},\mathbf{\sigma})(1+O(S^{-\frac{1}{2}-w\tau}))$ where $\mathbf{\sigma} = \mathbf{a}$ or $\mathbf{b}$, where we have assumed (and justified) up until this point that the dependence on the degree sequence $\mathbf{d}$ in $f$ is weak.  Eventually, we will reach a point where such an assumption is no longer reasonable.  In this case we decompose our approximation $f$ into one part that effectively ignores the dependence of $\mathbf{d}$ and a part of $f$ that takes into account $\mathbf{d}$.  That is, we can write \begin{equation}\tag{C.1} \label{eq:higherorder1}  f (\mathbf{e}_{i},\mathbf{e}_{j},\mathbf{d}_1,\mathbf{\sigma})=f (\mathbf{e}_{i},\mathbf{e}_{j},\mathbf{d}_0,\mathbf{\sigma})(1+O(S^{-\frac{1}{2}-\tau})),
\end{equation} where we have proven that for suitably chosen $\mathbf{d}_0$ the correction will yield a term of size at most  $O(S^{-\frac{1}{2}-\tau})$.  For simplicity, we will consider $\mathbf{\sigma} = \mathbf{a}$.  Denote the $O(S^{-\frac{1}{2}-\tau})$ term in equation (\ref{eq:higherorder1}) by $z_*$, such that 
 equation (\ref{eq:higherorder1}) becomes 

\begin{equation}\tag{C.2}\label{eq:higherorder2}f (\mathbf{e}_{i},\mathbf{e}_{j},\mathbf{d}_1,\mathbf{a})=f (\mathbf{e}_{i},\mathbf{e}_{j},\mathbf{d}_0,\mathbf{a})+ z_{*}f (\mathbf{e}_{i},\mathbf{e}_{j},\mathbf{d}_0,\mathbf{a}).
\end{equation}

In the Theorem below, we restate Theorem \ref{thm:arborder2}; note that we implicitly define $z = z_{*}f (\mathbf{e}_{i},\mathbf{e}_{j},\mathbf{d}_0,\mathbf{a})$.

\begin{duplicate}[Theorem~\ref{thm:arborder2}]
Consider an approximation $$\frac{\|G_{\mathbf{d_{-i}}}\|}{\|G_{\mathbf{d_{-j}}}\|}=f(\mathbf{e}_{i},\mathbf{e}_{j},\mathbf{d},\mathbf{\sigma})(1+O(S^{-\frac{1}{2}-w\tau}))$$ for some $w> 0$. 
Furthermore suppose that for $m=O(S^{\frac{1}{2}-\tau})$, $$ f(\mathbf{e}_{i},\mathbf{e}_{j},\mathbf{d}_0,\mathbf{\sigma})= f(\mathbf{e}_{i},\mathbf{e}_{j},\mathbf{d}_1,\mathbf{\sigma})+z(\mathbf{e}_{i},\mathbf{e}_{j},\mathbf{d}_0-\mathbf{d}_1,\mathbf{d}_0,\mathbf{\sigma})$$
where  $\|\mathbf{d}_1-\mathbf{d}_0\|_1\leq m$ and $z(\mathbf{e}_{i},\mathbf{e}_{j},\mathbf{d}_0-\mathbf{d}_1,\mathbf{d}_0,\mathbf{\sigma})\leq  O(S^{-\frac{1}{2}-\tau})f(\mathbf{e}_{i},\mathbf{e}_{j},\mathbf{d}_0,\mathbf{\sigma})$.
\textcolor{black}{If $\mathbf{\sigma}=\mathbf{a}$,} then we can construct a sharper approximation $$\frac{\|G_{\mathbf{d_{-i}}}\|}{\|G_{\mathbf{d_{-j}}}\|}=g(\mathbf{e}_{i},\mathbf{e_{j}},\mathbf{d},\mathbf{a})(1+O(S^{-\frac{1}{2}-(w+2)\tau}))$$ where
\[
\begin{array}{rcl}
g(\mathbf{e}_{i},\mathbf{e}_{j},\mathbf{d},\mathbf{a})&=&\frac{a_{i}}{a_{j}}\exp(\log(1+\frac{(a_{i}-1)\sum_{x_{1}\neq...\neq x_{a_{i-1}}=x_{a_i}} \Pi_{k=1}^{a_i} f(\mathbf{e}_{x_{k}},\mathbf{e}_{u_{k}},(\mathbf{a}-a_i\mb{e}_i,(\mathbf{b}-\sum_{j=1}^{k-1} \mathbf{e}_{x_{j}}+\sum_{j=1}^{a_i-k}\mathbf{e}_{u_{j}}),\mathbf{b}) }{\sum_{x_{1}\neq...\neq x_{a_{i-1}}\neq x_{a_i}} \Pi_{k=1}^{a_i} f(\mathbf{e}_{x_{k}},\mathbf{e}_{u_{k}},(\mathbf{a}-a_i\mb{e}_i,\mathbf{b}-\sum_{j=1}^{k-1} \mathbf{e}_{x_{j}}+\sum_{j=1}^{a_i-k}\mathbf{e}_{u_{j}}),\mathbf{b})})- \vspace{0.1in} \\
 & & \log(1+\frac{(a_{j}-1)\sum_{x_{1}\neq...\neq x_{a_{j-1}}=x_{a_j}} \Pi_{k=1}^{a_j} f(\mathbf{e}_{x_{k}},\mathbf{e}_{u_{k}},(\mathbf{a}-a_je_j,\mathbf{b}-\sum_{j=1}^{k-1} \mathbf{e}_{x_{j}}+\sum_{j=1}^{a_i-k}\mathbf{e}_{u_{j}}),\mathbf{b}) }{\sum_{x_{1}\neq...\neq x_{a_{j-1}}\neq x_{a_j}} \Pi_{k=1}^{a_j} f(\mathbf{e}_{x_{k}},\mathbf{e}_{u_{k}},(\mathbf{a}-a_je_j,\mathbf{b}-\sum_{j=1}^{k-1} \mathbf{e}_{x_{j}}+\sum_{j=1}^{a_i-k}\mathbf{e}_{u_{j}}),\mathbf{b})}))
\end{array}
\]
for an arbitrary choice of $u_{k}$. A similar result holds, with $g$ depending on $\mathbf{b}$, if $\mathbf{\sigma}=\mathbf{b}$.
\end{duplicate}
\begin{proof} Recall the notation from Theorem \ref{thm:arborder1}.  As before, we define $\Delta_i$ to be the difference between  $\frac{\|G_{X_{1_i}}\|}{a_{i}\|G_{X_{0_i}}\|}$  evaluated using the exact ratio $\phi$ and the same quantity evaluated using the approximation $f$.  In essence, we will show that using our crude approximation $f$ will yield an approximation of $\frac{\|G_{X_{1_i}}\|}{a_{i}\|G_{X_{0_i}}\|}$ by a factor of   $(1+O(S^{-\frac{1}{2}-(w+2)\tau}))$,  after which application of  Corollary \ref{cor5} yields the result.    Again we consider equation (\ref{eq:delta}), that is 

$$ \Delta_i = \frac{(a_{i}-1)\sum_{x_{1}\neq...\neq x_{a_{i-1}}=x_{a_i}} \Pi_{k=1}^{a_i} \phi(\mathbf{e}_{x_{k}},\mathbf{e}_{u_{a_i-k+1}},(\mathbf{a}-a_i\mb{e}_i-\mb{e}_j,\mathbf{b}-\sum_{j=1}^{k-1} \mathbf{e}_{x_{j}}-\sum_{j=1}^{a_i-k}\mathbf{e}_{u_{j}}),\mathbf{b}) }{\sum_{x_{1}\neq...\neq x_{a_{i-1}}\neq x_{a_i}} \Pi_{k=1}^{a_i} \phi(\mathbf{e}_{x_{k}},\mathbf{e}_{u_{a_i-k+1}},(\mathbf{a}-a_i\mb{e}_i-\mb{e}_j,\mathbf{b}-\sum_{j=1}^{k-1} \mathbf{e}_{x_{j}}-\sum_{j=1}^{a_i-k}\mathbf{e}_{u_{j}}),\mathbf{b})}-$$

\begin{equation} \tag{C.3} \frac{(a_{i}-1)\sum_{x_{1}\neq...\neq x_{a_{i-1}}=x_{a_i}} \Pi_{k=1}^{a_i} {f}(\mathbf{e}_{x_{k}},\mathbf{e}_{u_{a_i-k+1}},(\mathbf{a}-a_i\mb{e}_i-\mb{e}_j,(\mathbf{b}-\sum_{j=1}^{k-1} \mathbf{e}_{x_{j}}-\sum_{j=1}^{a_i-k}\mathbf{e}_{u_{j}}),\mathbf{b}) }{\sum_{x_{1}\neq...\neq x_{a_{i-1}}\neq x_{a_i}} \Pi_{k=1}^{a_i} {f}(\mathbf{e}_{x_{k}},\mathbf{e}_{u_{a_i-k+1}},(\mathbf{a}-a_i\mb{e}_i-\mb{e}_j,(\mathbf{b}-\sum_{j=1}^{k-1} \mathbf{e}_{x_{j}}-\sum_{j=1}^{a_i-k}\mathbf{e}_{u_{j}}),\mathbf{b})}.
\end{equation}

As in the proof of Theorem \ref{thm:arborder1}, we abuse notation by using $f_k(x_k)$, $\phi_k(x_k)$ in place of the full expressions for $f$ and $\phi$, even though $f_k$ and $\phi_k(x_k)$ do also depend on $x_1,...,x_{k-1}$. This reduces to a more tractable (but slightly misleading) notation:

$$\Delta_i = \frac{(a_{i}-1)\sum_{x_{1}\neq...\neq x_{a_{i-1}}=x_{a_i}} \Pi_{k=1}^{a_i} \phi_k({x_{k}}) }{\sum_{x_{1}\neq...\neq x_{a_{i-1}}\neq x_{a_i}} \Pi_{k=1}^{a_i} \phi_k({x_{k}})}-\frac{(a_{i}-1)\sum_{x_{1}\neq...\neq x_{a_{i-1}}=x_{a_i}} \Pi_{k=1}^{a_i} {f_k}({x_{k}}) }{\sum_{x_{1}\neq...\neq x_{a_{i-1}}\neq x_{a_i}} \Pi_{k=1}^{a_i} {f_k}({x_{k}})}.$$

\textcolor{black}{Let $D_{0}$ denote the set of sets of $a_i$ indices in $\{ 1, \ldots, N \}$ such that the variables associated with these indices are distinct and let $D_1$ denote the set of sets of $a_i$ indices in $\{ 1, \ldots, N \}$ such that the variables associated with the first $a_{i}-2$ are distinct and those with the final two are equal.}  Writing $\Delta_i$ as a single fraction, we obtain 

\[
\begin{array}{rcl}
\Delta_i &=& \frac{\textstyle (a_{i}-1)[\sum_{D_1} \Pi_{k=1}^{a_i} \phi_k({x_{k}})\sum_{D_0} \Pi_{k=1}^{a_i} {f_k}({x_{k}})-\sum_{D_1} \Pi_{k=1}^{a_i} {f_k}({x_{k}})\sum_{D_0} \Pi_{k=1}^{a_i} \phi_k({x_{k}}) ]}{\textstyle \sum_{D_0} \Pi_{k=1}^{a_i} \phi_k({x_{k}})\sum_{D_0} \Pi_{k=1}^{a_i} {f_k}({x_{k}})} \vspace{0.1in} \\
&=& \frac{\textstyle (a_{i}-1)[\sum_{D_1,D_0} \left( \Pi_{x_k\in D_1} \phi_k({x_{k}})\Pi_{x_k\in D_0} {f_k}({x_{k}})- \Pi_{x_k\in D_1}{f_k}({x_{k}}) \Pi_{x_k \in D_0} \phi_k({x_{k}})\right) ]}{\textstyle \sum_{D_0} \Pi_{k=1}^{a_i} \phi_k({x_{k}})\sum_{D_0} \Pi_{k=1}^{a_i} {f_k}({x_{k}})}.
\end{array}
\]

\textcolor{black}{We now apply Theorem \ref{thm:assumption} repeatedly to write $\phi_k=f_k(1+\xi_k)$ where $\xi_k$ depends only on $x_1,...,x_{k}$ (but we omit the dependence) and, by Theorem \ref{thm:assumption} and the definition of $f$,  $\xi_k=O(S^{-\frac{1}{2}-w\tau})$ for some positive $w$.} Furthermore, let $\delta_k =0$ if $k = a_i$ or $k=a_i -1$ and $\delta_k = 1$ otherwise.

These steps yield  

\begin{equation} \tag{C.4}
\label{eq:deltastarAPPENDIX}
\Delta_i =\frac{(a_{i}-1)[\sum_{D_1,D_0} \left( \Pi_{x_k\in D_1} f_k({x_{k}})(1+\xi_k\delta_k)\Pi_{x_k\in D_0} {f_k}({x_{k}})- \Pi_{x_k\in D_1}{f_k}({x_{k}}) \Pi_{x_k \in D_0} f_k({x_{k}})(1+\xi_k\delta_k)\right)+\epsilon ]}{\sum_{D_0} \Pi_{k=1}^{a_i} \phi_k({x_{k}})\sum_{D_0} \Pi_{k=1}^{a_i} {f_k}({x_{k}})} 
\end{equation}
where $\epsilon$ is the compensatory term for zeroing out certain terms by inserting the $\delta_k$ into equation (\ref{eq:deltastarAPPENDIX}), which we can express as  $\epsilon = \epsilon_1 + \epsilon_2 - \epsilon_3 - \epsilon_4$ for 

\begin{equation}\tag{C.5} \label{eq:epsilonAPPENDIX}
\epsilon_1 =  \sum_{D_1}\xi_{a_i}f_{a_i}(x_{a_i})f_{a_i-1}(x_{a_i-1})\Pi_{k\neq a_i-1,a_i}f_k({x_{k}})(1+\xi_k)\sum_{D_0}\Pi_{x_k\in D_0} {f_k}({x_{k}}), 
\end{equation}

\begin{equation}\tag{C.6}
\epsilon_2 =  \sum_{D_1}\xi_{a_i-1}f_{a_i-1}(x_{a_i-1})\Pi_{k\neq a_i-1}f_k({x_{k}})(1+\xi_k)\sum_{D_0}\Pi_{x_k\in D_0} {f_k}({x_{k}}), 
\end{equation}

\begin{equation}\tag{C.7}
\epsilon_3 =  \sum_{D_1}\Pi_{x_k\in D_1} {f_k}({x_{k}})\sum_{D_0}\xi_{a_i-1}f_{a_i}(x_{a_i})f_{a_i-1}(x_{a_i-1})\Pi_{k\neq a_i-1,a_i}f_k({x_{k}})(1+\xi_k), 
\end{equation}

\begin{equation}\tag{C.8}
\epsilon_4 =  \sum_{D_1}\Pi_{x_k\in D_1} {f_k}({x_{k}})\sum_{D_0}\xi_{a_i}f_{a_i}(x_{a_i})\Pi_{k\neq a_i}f_k({x_{k}})(1+\xi_k).
\end{equation}
The procedure for identifying that $\epsilon$ is indeed a `higher order term' is exactly identical to the proof in Theorem \ref{thm:arborder1} and hence we ignore the contribution from $\epsilon$.

It follows instantly from equation (\ref{eq:deltastarAPPENDIX}) that once we distribute $\Pi f_k(1+\xi_k\delta_k)$ and cancel identical terms, the contribution in the numerator only consists of terms where there is at least one $\xi_k$ in the product.  Hence we can define a vector $\mathbf{\eta}$ where each entry $\eta_k$ either equals $1$ or $0$ but $\mathbf{\eta}\neq \mathbf{0}$ and $\eta_{a_i-1}=\eta_{a_i} = 0$, which yields 

\begin{equation}
\tag{C.9}
\label{eq:deltastarAPPENDIX2}
\begin{split}
\Delta_i  \leq \hspace{470pt} \\ \sum_{\mathbf{\eta}\neq \mathbf{0}}\frac{(a_{i}-1)[\sum_{D_1,D_0} \left( \Pi_{x_k\in D_1} f_k({x_{k}})(1+\xi_k\delta_k-\eta_k)\Pi_{x_k\in D_0} {f_k}({x_{k}})- \Pi_{x_k\in D_1}{f_k}({x_{k}}) \Pi_{x_k \in D_0} f_k({x_{k}})(1+\xi_k\delta_k-\eta_k)\right) ]}{\sum_{D_0} \Pi_{k=1}^{a_i} \phi_k({x_{k}})\sum_{D_0} \Pi_{k=1}^{a_i} {f_k}({x_{k}})}.
\end{split}
\end{equation}

Now, for $k=a_i$ or $k=a_i-1$, $f_k(x_k)(1+\xi_k\delta_k)=f_k(x_k)$ by definition.  For these choices of $k$ in $D_0$ and $D_1$, we employ the relationship that $f_k = \mathfrak{i}_k+z_k$ where $\mathfrak{i}_k$ is independent of $x_1,...,x_{k-1}$ and $z_k=O(S^{-\frac{1}{2}-\tau})f_k$ depends on these values.  \textcolor{black}{The $z_k$ represent the new component here relative to Theorem \ref{thm:arborder1}, and we need to show that these terms are small enough that they do not make an important contribution.} We denote the first $a_i - 2$ variables in $D_0$ and $D_1$ as $F_0$ and $F_1$ respectively.   We now have the following (omitting the dependence on $x$):

\begin{equation}\tag{C.10} \label{eq:deltamain}
\Delta_i  \leq \sum_{\mathbf{\eta}\neq \mathbf{0}}\frac{(a_{i}-1)\sum_{D_1}[\mathfrak{i}_{a_i - 1}+z_{a_i - 1}]^{2}\Pi_{x_k\in F_1} f_k(1+\xi_k-\eta_k)\sum_{D_0}[\mathfrak{i}_{a_i - 1}+z_{a_i - 1}][\mathfrak{i}_{a_i}+z_{a_i}]\Pi_{x_k\in F_0} {f_k}}{\sum_{D_0} \Pi_{k=1}^{a_i} \phi_k\sum_{D_0} \Pi_{k=1}^{a_i} {f_k}} - 
\end{equation}
$$\frac{(a_{i}-1)\sum_{D_1}[\mathfrak{i}_{a_i - 1}+z_{a_i - 1}]^{2} \Pi_{x_k\in F_1}{f_k}\sum_{D_0} [\mathfrak{i}_{a_i - 1}+z_{a_i - 1}][\mathfrak{i}_{a_i}+z_{a_i}]\Pi_{x_k \in F_0} f_k(1+\xi_k-\eta_k)}{\sum_{D_0} \Pi_{k=1}^{a_i} \phi_k\sum_{D_0} \Pi_{k=1}^{a_i} f_k}. 
$$

With some algebra, we can re-express equation (\ref{eq:deltamain}) as 

\begin{equation} \tag{C.11}\label{eq:deltamain2}
\Delta_i  \leq \sum_{\mathbf{\eta}\neq \mathbf{0}}\frac{(a_{i}-1)\sum_{D_1}[\mathfrak{i}_{a_i - 1}]^{2}\Pi_{x_k\in F_1} f_k(1+\xi_k-\eta_k)\sum_{D_0}[\mathfrak{i}_{a_i - 1}][\mathfrak{i}_{a_i}]\Pi_{x_k\in F_0} {f_k}}{\sum_{D_0} \Pi_{k=1}^{a_i} \phi_k\sum_{D_0} \Pi_{k=1}^{a_i} {f_k}} - 
\end{equation}

$$\frac{(a_{i}-1)\sum_{D_1}[\mathfrak{i}_{a_i - 1}]^{2} \Pi_{x_k\in F_1}{f_k}\sum_{D_0} [\mathfrak{i}_{a_i - 1}][\mathfrak{i}_{a_i}]\Pi_{x_k \in F_0} f_k(1+\xi_k-\eta_k)}{\sum_{D_0} \Pi_{k=1}^{a_i} \phi_k\sum_{D_0} \Pi_{k=1}^{a_i} f_k} +\Omega
$$
where $\Omega$ is the compensatory term for omitting the $z_k$ terms (which we define later).   Now since $\mathfrak{i}_k$ is independent of $x_1,...,x_{k-1}$, we can employ the Mean Value Theorem to integrate out the last 2 variables, define $D_*$ as the set of sets of $a_{i}-2$ distinct indices, and express equation (\ref{eq:deltamain2}) as

\begin{equation}\tag{C.12} \label{eq:deltamain3}
\Delta_i  \leq \sum_{\mathbf{\eta}\neq \mathbf{0}}\frac{(a_{i}-1)\sum_{D_*,D_*}\lambda_1 \Pi_{x_k\in D_*}f_k(1+\xi_k-\eta_k)\lambda_{2}\Pi_{x_k\in D_*}{f_k}-\lambda_{1} \Pi_{x_k\in D_*}{f_k} \lambda_{2}\Pi_{x_k \in  D_*} f_k(1+\xi_k-\eta_k)}{\sum_{D_0} \Pi_{k=1}^{a_i} \phi_k\sum_{D_0} \Pi_{k=1}^{a_i} f_k} +\Omega =
\end{equation}

$$\sum_{\mathbf{\eta}\neq \mathbf{0}}\frac{(a_{i}-1)\lambda_{1}\lambda_{2}\sum_{D_*,D_*}[\Pi_{x_k\in D_*}f_k(1+\xi_k-\eta_k)\Pi_{x_k\in D_*}{f_k}- \Pi_{x_k\in D_*}{f_k} \Pi_{x_k \in  D_*} f_k(1+\xi_k-\eta_k)]}{\sum_{D_0} \Pi_{k=1}^{a_i} \phi_k\sum_{D_0} \Pi_{k=1}^{a_i} f_k} +\Omega =\sum_{\mathbf{\eta}\neq \mathbf{0}} \Omega. $$

Now we conclude the proof. From equation (\ref{eq:deltamain}) we can write $\Omega = \Omega_{+} - \Omega_{-}$ where $\Omega_{+}$ includes the positive compensatory terms from the (first part of) equation (\ref{eq:deltamain}) and $\Omega_{-}$ includes the negative compensatory terms from the (second part of) equation (\ref{eq:deltamain}). 
The following table  illustrates all of the subcases that constitute the $\Omega_{+,i}'s$ in equation (\ref{eq:deltamain}).
The columns indicate the term in the first summation found in equation (\ref{eq:deltamain})  ($\Omega_{+}$)  and the rows indicate the term in the second summation.  If the $i,j$th entry is a \checkmark, then this is a relevant subcase, while the case marked with \textbf{X} is not.   
\begin{center}
    \begin{tabular}{| l | l | l | l |}
    \hline
    & $\mathfrak{i}_{a_i-1}^{2}$ & $2\mathfrak{i}_{a_i-1}z_{a_i-1}$ & $z_{a_i-1}^{2}$ \\ \hline
    $\mathfrak{i}_{a_i-1}\mathfrak{i}_{a_i}$ & \textbf{X} & \checkmark & \checkmark \\ \hline
    $\mathfrak{i}_{a_i-1}z_{a_i}$ & \checkmark & \checkmark & \checkmark \\ \hline
    $\mathfrak{i}_{a_i}z_{a_i-1}$ &\checkmark & \checkmark & \checkmark \\ \hline
    $z_{a_i}z_{a_i-1}$ & \checkmark & \checkmark & \checkmark \\ \hline
    \end{tabular}
\end{center}

Note that only the first entry in the table is \textbf{X}; we already demonstrated through equation (\ref{eq:deltamain2}) that the contribution  from the
the case where both summations contribute $\mathfrak{i}'s$  without any $z's$ is $0$.
Hence, we have $11$ subcases and 
we can express $\Omega_{+}=\sum_{i=1}^{11} \Omega_{+,i}$ and $\Omega_{-}=\sum_{i=1}^{11} \Omega_{-,i}$.

As we have demonstrated in this work, the $z$'s represent terms that are much smaller than the $\mathfrak{i}'s$.  Consequently, {\em{interesting}} subcases will involve the fewest number of appearances fo $z$'s as possible.   Furthermore, by symmetry the terms corresponding to rows involving the $\mathfrak{i}_{a_i-1}z_{a_i}$ or $\mathfrak{i}_{a_i}z_{a_i-1}$ terms are identical due to symmetry. 
We therefore only need to examine two subcases, (a) the subcase where the first summation is $2\mathfrak{i}_{a_i-1}z_{a_i-1}$ and the second summation is $\mathfrak{i}_{a_i-1}\mathfrak{i}_{a_i}$, which we define to be $\Omega_{+,1},\Omega_{-,1}$ and (b) the subcase where the first summation is $\mathfrak{i}_{a_i-1}^{2}$ and the second summation is $\mathfrak{i}_{a_i-1}z_{a_i}$ which we define to be $\Omega_{+,3},\Omega_{-,3}$.  (We leave the subcase (b) as an exercise to the reader.)

So consider,

\begin{equation}\tag{C.13}
\Omega_{+,1} = \sum_{\mathbf{\eta}\neq \mathbf{0}}\frac{(a_{i}-1)\sum_{D_1}[2\mathfrak{i}_{a_i - 1}z_{a_i-1}]\Pi_{x_k\in F_1} f_k(1+\xi_k-\eta_k)\sum_{D_0}[\mathfrak{i}_{a_i - 1}][\mathfrak{i}_{a_i}]\Pi_{x_k\in F_0} {f_k}}{\sum_{D_0} \Pi_{k=1}^{a_i} \phi_k\sum_{D_0} \Pi_{k=1}^{a_i} {f_k}}.
\end{equation}
We will briefly consider the case where there is precisely one $\eta_k = 1$.  By symmetry we will consider $\eta_1 = 1$ and multiply this quantity by $a_i-2$ (as there are $a_i-2$ possible choices to let $\eta_k = 1$; recall that $\eta_{a_i-1}=\eta_{a_i}=0$), obtaining 

\begin{equation}\tag{C.14}
\Omega_{+,1}^{*}:=\frac{(a_{i}-2)(a_{i}-1)\sum_{D_1}[2\mathfrak{i}_{a_i - 1}z_{a_i-1}]f_1\xi_1\Pi_{x_k,k\neq 1\in F_1} f_k(1+\xi_k)\sum_{D_0}[\mathfrak{i}_{a_i - 1}][\mathfrak{i}_{a_i}]\Pi_{x_k\in F_0} {f_k}}{\sum_{D_0} \Pi_{k=1}^{a_i} \phi_k\sum_{D_0} \Pi_{k=1}^{a_i} {f_k}}.
\end{equation}

But by assumption $z_{a_i-1}\leq O(S^{-\frac{1}{2}-\tau})f_{a_i-1}$ and trivially $\mathfrak{i}_{a_i - 1}\leq Cf_{a_i - 1}$, for some constant $C$, so we have that 

\begin{equation}\tag{C.15}
\Omega_{+,1}^{*}\leq \frac{(a_{i}-2)(a_{i}-1)\sum_{D_1}[O(S^{-\frac{1}{2}-\tau})f_{a_i-1}^{2}]f_1\xi_1\Pi_{x_k,k\neq 1\in F_1} f_k(1+\xi_k)\sum_{D_0}[f_{a_i - 1}][f_{a_i}]\Pi_{x_k\in F_0} {f_k}}{\sum_{D_0} \Pi_{k=1}^{a_i} \phi_k\sum_{D_0} \Pi_{k=1}^{a_i} {f_k}}.
\end{equation}

Now applying the fact that $a_{i}\leq d_{max} = O(S^{\frac{1}{2}-\tau})$ and $\xi_1 = O(S^{-\frac{1}{2}-w\tau})$,

\begin{equation}\tag{C.16}
\Omega_{+,1}^{*}\leq O(S^{-(3+w)\tau})\frac{\sum_{D_1}[2f_{a_i-1}^{2}]f_1\Pi_{x_k,k\neq 1\in F_1} f_k(1+\xi_k)\sum_{D_0}[f_{a_i - 1}][f_{a_i}]\Pi_{x_k\in F_0} {f_k}}{\sum_{D_0} \Pi_{k=1}^{a_i} \phi_k\sum_{D_0} \Pi_{k=1}^{a_i} {f_k}}.
\end{equation}

Then employing the technique we used in the proof of Theorem \ref{thm:arborder1} by applying Corollary \ref{cor:expand} to reduce the domains $D_0$,$D_1$ to $D_*$ we get that 

\begin{equation}\tag{C.17}
\Omega_{+,1}^{*}\leq O(S^{-(3+w)\tau})O(S^{\frac{7}{2}-\tau})\frac{\sum_{D_*}\Pi_{x_k\in D_*} f_k(1+\xi_k)\sum_{D_*}\Pi_{x_k\in D_*} {f_k}}{O(S^{4})\sum_{D_*} \Pi_{k=1}^{a_i-2} \phi_k\sum_{D_*} \Pi_{k=1}^{a_i-2} {f_k}}=O(S^{-\frac{1}{2}-4\tau-w\tau}), 
\end{equation}
and hence this constribution is a higher order term.  Finally, we only considered the case where there is precisely one $\eta_k = 1$.  But cases where there are more positive $\eta_k's$  yield even smaller terms!  For example, when we had precisely one $\eta_k = 1$, there were at most $a_i$ choices for selecting $k$, but for each $\eta_k=1$, we end up multiplying the term $f_k$ by  $\xi_k$ (as opposed to $(1+\xi_k)$), and hence if there are $m$ $\eta_k's$ that equal $1$, then this results in a contribution bounded by $\Omega_{+,1}\leq \sum_{m=1}^{a_i}(a_i)^{m-1}\xi^{m-1}\Omega_{+,1}^{*}= \sum_{m=1}^{a_i}O(S^{-w(m-1)\tau-(m-1)\tau})\Omega_{+,1}^{*}=O(S^{-\frac{1}{2}-4\tau-w\tau})$.   Since these contributions are now much smaller than $O(S^{-\frac{1}{2}-2\tau-w\tau})$, we conclude that 

$$\Delta_i = O(S^{-\frac{1}{2}-2\tau-w\tau})$$ and from the beginning part of the proof of Theorem \ref{thm:arborder1} conclude that our new approximation $\frac{\|G_{\mathbf{d_{-i}}}\|}{\|G_{\mathbf{d_{-j}}}\|}=g(\mathbf{e}_{i},\mathbf{e_{j}},\mathbf{d},\mathbf{a})(1+O(S^{-\frac{1}{2}-(w+2)\tau}))$.

\end{proof}

\end{document}